\documentclass[reqno]{amsart}
%\input psfig.sty 

% !TeX spellcheck = en_US

\setlength{\voffset}{-1,5cm}
\setlength{\hoffset}{-.5cm}
\setlength{\oddsidemargin}{1cm}
\setlength{\evensidemargin}{1cm}
\setlength{\textwidth}{160mm}

\setlength{\textheight}{220mm}
\setlength{\footskip}{35pt}
\setlength{\headsep}{40pt}
\setlength{\headheight}{0pt}

\frenchspacing
\usepackage{amsmath,amsthm}
\usepackage{dsfont} 
\usepackage[english]{babel}
\usepackage{color}
\usepackage{amssymb}
\usepackage{mathrsfs}
\usepackage{graphicx}
\usepackage[font=footnotesize, labelfont=bf]{caption}
\usepackage{mathtools}
\usepackage[colorlinks=true, pdfstartview=FitV, linkcolor=blue, citecolor=blue, urlcolor=blue,pagebackref=false]{hyperref}
\usepackage{microtype}
\usepackage{enumitem}
\usepackage[normalem]{ulem}
\usepackage{soul}

\usepackage{float}

\newtheorem{theorem}{Theorem}[section]
\newtheorem{proposition}[theorem]{Proposition}
\newtheorem{lemma}[theorem]{Lemma}

\theoremstyle{definition}
\newtheorem{remark}[theorem]{Remark}
\newtheorem{example}[theorem]{Example}
\newtheorem{definition}[theorem]{Definition}

\newtheorem{assumption}{Assumption}
\numberwithin{equation}{section}

\definecolor{myblue}{RGB}{0,0,128}

\def\Xint#1{\mathchoice
	{\XXint\displaystyle\textstyle{#1}}%
	{\XXint\textstyle\scriptstyle{#1}}%
	{\XXint\scriptstyle\scriptscriptstyle{#1}}%
	{\XXint\scriptscriptstyle\scriptscriptstyle{#1}}%
	\!\int}
\def\XXint#1#2#3{{\setbox0=\hbox{$#1{#2#3}{\int}$ }
		\vcenter{\hbox{$#2#3$ }}\kern-.57\wd0}}

\def\dashint{\Xint-}

\newcommand{\dx}{\,\mathrm{d}x}
\newcommand{\dy}{\,\mathrm{d}y}

\newcommand{\e}{\varepsilon}
\newcommand{\dist}{{\rm{dist}}}
\renewcommand{\L}{\mathcal{L}}
\newcommand{\w}{\omega}

\newcommand{\R}{\mathbb{R}}
\newcommand{\Z}{\mathbb{Z}}
\newcommand{\N}{\mathbb{N}}
\newcommand{\Q}{\mathbb{Q}}

\newcommand{\F}{\mathcal{F}}

\renewcommand{\P}{\mathbb{P}}

%%%%%%%%%%%%%%%%% End Macros  %%%%%%%%%%%%%%%%%

\begin{document}

\author{Matthias Ruf}
\address[Matthias Ruf]{Section de mathématiques, Ecole Polytechnique Fédérale de Lausanne, Station 8, 1015 Lausanne, Switzerland}
\email{matthias.ruf@epfl.ch}

\author{Thomas Ruf}
\address[Thomas Ruf]{Institut f\"ur Mathematik, Universit\"at Augsburg, 86159 Augsburg, Germany}
\email{thomas.ruf@math.uni-augsburg.de}

\title[Stochastic homogenization of degenerate integral functionals]{Stochastic homogenization of degenerate integral functionals and their Euler-Lagrange equations}

\begin{abstract}	
We prove stochastic homogenization for integral functionals defined on Sobolev spaces, where the stationary, ergodic integrand satisfies a degenerate growth condition of the form
\begin{equation*}
	c|\xi A(\w,x)|^p\leq f(\w,x,\xi)\leq |\xi A(\w,x)|^p+\Lambda(\w,x)
\end{equation*}
for some $p\in (1,+\infty$) and with a stationary and ergodic diagonal matrix $A$ such that its norm and the norm of its inverse satisfy minimal integrability assumptions. We also consider the convergence when Dirichlet boundary conditions or an obstacle condition are imposed. Assuming the strict convexity and differentiability of $f$ with respect to its last variable, we further prove that the homogenized integrand is also strictly convex and differentiable. These properties allow us to show homogenization of the associated Euler-Lagrange equations.	
\end{abstract}

\maketitle
{\small
	\noindent\keywords{\textbf{Keywords:} Stochastic homogenization, integral functionals, degenerate $p$-growth, homogenization of Euler-Lagrange equations}
	
	\noindent\subjclass{\textbf{MSC 2020:} 49J45, 49J55, 60G10, 35J70}
}	

%\tableofcontents

\section{Introduction}
In their pioneering papers \cite{DMMoI,DMMoII}, Dal Maso and Modica have set the basic strategy for the stochastic homogenization of integral functionals defined on Sobolev spaces. They consider functionals of the type
\begin{equation}\label{eq:F_eps}
	F_{\e}(\w,u)=\int_D f(\w,\tfrac{x}{\e},\nabla u(x))\dx,
\end{equation}
where $f$ is measurable in $(\w,x)$ and convex in the last variable satisfying the $p$-growth condition
\begin{equation}\label{eq:p-growth}
	c|\xi|^p\leq f(\w,x,\xi)\leq C(|\xi|^p+1)
\end{equation}
for some $p\in (1,+\infty)$. In this setting, the appropriate tool to study the asymptotic behavior of minimizers as $\e\to 0$ is $\Gamma$-convergence (cf. \cite{Br,DM} for a general introduction to the topic). Randomness enters the problem through the parameter $\w$, that is, the integrands are chosen at random in the above class of integrands. A crucial assumption that allows to homogenize the functionals to a simpler functional is stationarity, that is, for all $z\in\R^d$ (or all $z\in\Z^d$, depending on the model) and for any finite point set $\{x_1,\ldots,x_n\}\subset\R^d$, the random vectors $(f(\w,x_1,\xi),\ldots,f(\w,x_n,\xi))$ and $(f(\w,x_1+z,\xi),\ldots,f(\w,x_n+z,\xi))$ have the same distribution (this can be expressed conveniently with measure preserving group actions, cf. Section \ref{s.preliminaries}). Under those assumptions one can prove that the $\Gamma$-limit exists almost surely and is given by an integral functional of the form
\begin{equation*}
	F_{\rm hom}(\w,u)=\int_D f_{\rm hom}(\w,\nabla u(x))\dx.
\end{equation*}
Under the additional assumption of ergodicity, the limit is deterministic. The integrand still satisfies $p$-growth conditions and is convex in the gradient variable. The basic strategy can be summarized as follows: by deterministic arguments, one proves that the given class of integral functionals is compact with respect to $\Gamma$-convergence, which implies that (up to a subsequence) the $\Gamma$-limit of $F_{\e}(\w,\cdot)$ has the form
\begin{equation*}
	F_{0}(\w,u)=\int_D f_0(\w,x,\nabla u(x))\dx.
\end{equation*}
In a second step, a blow-up formula, Jensen's inequality, and the convergence of minima show that
\begin{align*}
f_{0}(\w,x_0,\xi)&=\lim_{\delta\to 0}\inf\left\{\dashint_{Q_{\delta}(x_0)}f_0(\w,x,\xi+\nabla v(x))\dx:\,v\in W^{1,p}_0(Q_{\delta}(x_0))\right\}
\\
&=\lim_{\delta\to 0}\lim_{\e\to 0}\inf\left\{\dashint_{Q_{\delta}(x_0)}f(\w,\tfrac{x}{\e},\xi+\nabla v(x))\dx:\,v\in W^{1,p}_0(Q_{\delta}(x_0))\right\}
\\
&\!\!\!\!\overset{t=1/\e}{=}\lim_{\delta\to 0}\lim_{t\to +\infty}\inf\left\{\dashint_{Q_{\delta t}(x_0)}f(\w,y,\xi+\nabla v(x))\dx:\,v\in W^{1,p}_0(Q_{\delta t}(x_0))\right\}.
\end{align*}
It is then a consequence of the subadditive ergodic theorem \cite{AkKr} that the limit in $t$ exists and is independent of $x_0$ and $\delta$. This proves the full $\Gamma$-convergence result and ergodicity implies that the integrand is deterministic.
\\
This approach has been extended to nonconvex integrands in \cite{MeMi} with the same growth condition \eqref{eq:p-growth} and a quantitative continuity assumption in the gradient variable compatible with the $p$-growth of $f$. The additional continuity assumption is no major restriction since the relaxed functional has the same $\Gamma$-limit and by a general theory has an integrand that is quasiconvex in the gradient variable. For quasiconvex functions satisfying the $p$-growth condition \eqref{eq:p-growth} the additional continuity estimate comes for free (see \cite[Proposition 4.64]{FoLe}). To the best of our knowledge, the by now most general stochastic homogenization result for integral functionals on Sobolev spaces is \cite{DG_unbounded}, where the authors consider integrands $f$ that are either convex with the lower bound in \eqref{eq:p-growth} for $p>d$ and such that $\sup_{\w,x}f(\w,x,\cdot)$ has zero in the interior of its domain, but with no other growth condition from above, or nonconvex integrands with a convex lower and upper bound of the above type together with a technical upper semicontinuity condition (see \cite[Definition 2.5]{DG_unbounded}) that covers the case of adding nonconvex perturbations satisfying the $p$-growth condition \eqref{eq:p-growth} to a possibly unbounded convex integrand. In this setting, the approach by Dal Maso and Modica no longer works since no integral representation theorem exists for such functionals and therefore there is little chance to prove that the class of functionals is compact with respect to $\Gamma$-convergence.  

More recently, also the discrete-to-continuum analysis of finite-difference models on an $\e$-scaled lattice either with random weights or a random geometry attracted attention. Under the same $p$-growth condition \eqref{eq:p-growth} and a decay assumption for long-range interactions, general homogenization results for discrete energies defined on an $\e$-scaled stationary stochastic lattice or on a fixed periodic lattice $\e\Z^d$ with stationary, ergodic interactions were obtained in \cite{ACG2}. The latter case was extended to degenerate weights with a finite range of interaction in \cite{NSS}. More precisely, the authors consider energies of the type
\begin{equation*}
E_{\e}(\w,u)=\e^d\sum_{z\in \e\Z^d\cap D}\sum_{e\in \mathcal{E}}f_e(\w,\tfrac{z}{\e},\nabla u(e)),
\end{equation*} 
where $\mathcal{E}$ is a finite set of edges and $\nabla u(e)$ denotes the discrete gradient along the edge $e$. Up to constants, the density $f_e$ satisfies the growth condition
\begin{equation}\label{eq:deg-p-growth}
c\lambda_e(\w,x)|\xi|^p\leq f_e(\w,x,\xi)\leq \lambda_e(\w,x)(|\xi|^p+1),
\end{equation}
but the weights $\lambda_e$ are not required to be bounded uniformly from above and below (in that case \eqref{eq:deg-p-growth} and \eqref{eq:p-growth} would be equivalent). Instead, the following integrability assumptions are taken into account (here and in what follows $\Omega$ denotes a probability space):
\begin{itemize}
	\item $\lambda_e(\cdot,0) \in L^1(\Omega)$ and $\lambda_e^{-1/(p-1)}(\cdot,0)\in L^{1}(\Omega)$ if $u$ is scalar-valued;
	\item $\lambda_e(\cdot,0) \in L^{\alpha}(\Omega)$ for some $\alpha>1$ and $\lambda_e^{-\beta}(\cdot,0)\in L^{1}(\Omega)$ for some $\beta$ such that
	\begin{equation}\label{eq:moment_vectorial}
		\frac{1}{\alpha}+\frac{1}{\beta}\leq\frac{p}{d},
	\end{equation}
	if $u$ is a vectorial function.
\end{itemize}
In the scalar case, the authors need an additional 'convexity at infinity' assumption to be satisfied by $f_{e}$. Note that the moment condition in the vectorial case is strictly stronger than in the scalar case.

In this paper we consider a continuum version of \cite{NSS}, that is, we consider integral functionals of the type \eqref{eq:F_eps} with the integrand $f$ satisfying the degenerate growth condition
\begin{equation}\label{eq:deg-growth}
	c|\xi A(\w,x)|^p\leq f(\w,x,\xi)\leq |\xi A(\w,x)|^p+\Lambda(\w,x),
\end{equation}
where $A:\Omega\times\R^d\to \mathbb{M}_{d}$ is a diagonal matrix-valued function and $\Lambda:\Omega\times\R^d\to (0,+\infty)$. The novelty of our result is that we only assume the moment conditions $|A(\cdot,0)|^p,\Lambda(\cdot,0) \in L^1(\Omega)$ and $|A(\cdot,0)^{-1}|^{p/(p-1)}\in L^{1}(\Omega)$, both in the scalar and the vectorial case, and we drop the convexity assumption at infinity. These moment conditions are optimal in the sense that the multi-cell formula defining the homogenized integrand degenerates for some examples violating the above integrability assumptions (see Remark \ref{r.optimal} and Example \ref{ex:optimal}). Note that the different integrability exponents compared to \cite{NSS} are due to the fact that the matrix $A(\w,x)$ occurs inside the $p$th power, while in \eqref{eq:deg-p-growth} the edge weights $\lambda_e$ correspond to the $p$th power of the eigenvalues. The degeneracy via the matrix $A$ allows us to consider anisotropically degenerated integrands. However, the diagonal structure, that yields a single weight for each partial derivative similar to \eqref{eq:deg-p-growth}, is crucial for our proof. Besides joint measurability of $f$, the only further regularity assumption we make is the lower semicontinuity in the gradient-variable, which we need for measurability issues (cf. Lemma \ref{l.measurable}).

%Finally, we remark our approach requires the same weight on both sides of \eqref{eq:deg-growth}. Different weights were for instance considered in \cite{ArSm} in the context of stochastic homogenization of fully nonlinear elliptic PDEs, where it was shown that the optimal integrability condition for the lower bound is $\lambda(\cdot,0)^{-1}\in L^d(\Omega)$ (the upper bound being deterministic and the growth conditions corresponding to $p=2$).

Assuming the above degenerate growth condition together with the stationarity and ergodicity of $f,A$ and $\Lambda$ (cf. Assumption \ref{a.1}), we show in Theorem \ref{thm.Gamma_pure} that $u\mapsto F_{\e}(\w,u,D)$ $\Gamma$-converges in $L^1(D,\R^m)$ to a deterministic functional $F_{\rm hom}$ that is finite only on $W^{1,p}(D,\R^m)$, taking the form
\begin{equation*}
F_{\rm hom}(u)=\int_D f_{\rm hom}(\nabla u(x))\dx,
\end{equation*}
where the homogenized density is given by a standard multi-cell formula involving minimizing the heterogeneous functional under affine Dirichlet boundary conditions. Furthermore, the integrand $f_{\rm hom}$ satisfies the standard $p$-growth condition \eqref{eq:p-growth}. In Theorem \ref{thm:Dirichlet_and_forces} we consider Lipschitz-continuous boundary conditions and an external linear force, which both pass to the limit. In this case, compactness of minimizing sequences (or, more general, energy-bounded sequences) holds with respect to weak convergence in $W^{1,1}(D,\R^m)$ and strong convergence in $L^{d/(d-1)}(D,\R^m)$. In order to obtain the strong convergence with exponent $d/(d-1)$, in Theorem \ref{thm.embedding} we prove the complete continuity of the non-compact Sobolev embedding $W^{1,1}\hookrightarrow L^{d/(d-1)}$, that means, it maps weakly converging sequences to norm-converging sequences (cf. Theorem \ref{thm.embedding}). Very recently, D'Onofrio and Zeppieri obtained a similar stochastic homogenization result \cite{D'OnZe}, assuming additionally that the weights further belong to the Muckenhoupt class $\mathcal{A}_p$ (which ensures a slightly higher stochastic integrability and gives more structure to the corresponding weighted Sobolev space) as well as a local Lipschitz-continuity of $f$ in the last variable. However, they also prove a representation result for the $\Gamma$-limit in the non-homogenization regime which requires to work in weighted spaces even in the limit $\e\to 0$ and for which our strategy of proof would not be feasible.

In Theorem \ref{thm.obstacle}, we add an obstacle-type constraint of the form $u\geq\varphi_{\e}$ on $D$ (to be interpreted componentwise), where $\varphi_{\e}$ converges weakly$^*$ in $W^{1,\infty}(D,\R^m)$ to some function $\varphi\in W^{1,\infty}(D,\R^m)$. In the limit $\e\to 0$ we obtain an obstacle problem for $F_{\rm hom}$ with the obstacle $\varphi$ and boundary condition $g$. 

As in \cite{NSS}, the subtle point in the proof of the $\Gamma$-convergence is the possibility to locally modify a sequence on a small set with a controlled increase of energy (this is the so-called fundamental estimate in the language of $\Gamma$-convergence of local functionals). In the non-degenerate setting, this can be done if the sequence converges strongly in $L^p$, where $p$ is the growth-exponent of the integrand. However, in the degenerate setting the corresponding term is weighted by $|A(\w,\tfrac{x}{\e})|^p$. Moreover, the strong convergence in $L^p$ might not be an appropriate topology for the $\Gamma$-convergence since we can prove compactness of sublevel sets of the energy only in $W^{1,1}$. We overcome this issue using two ingredients: via a vectorial truncation, we show that up to a small error in energy, we can assume that the sequence is bounded in $L^{\infty}$. In order to pass to the limit in the critical term, we further have to control the oscillating weight function $|A(\w,\tfrac{x}{\e})|^p$. We prove a strengthened version of the ergodic theorem in the sense that this family of oscillating functions converges weakly in $L^1(D)$ for almost every realization. The ergodic theorem yields the convergence when integrating over cubes or, more generally, sets with a decent boundary, which would show weak convergence if $|A|^p$ possessed higher integrability. In our setting however, to show the weak convergence one needs to establish the ergodic theorem for averages of the form
\begin{equation*}
	\dashint_E |A(\w,\tfrac{x}{\e})|^p\dx
\end{equation*} 
for an arbitrary Borel set $E\subset D$. We show the $L^1$-weak convergence by an abstract approach identifying the biting limit of the sequence (cf. Lemma \ref{l.weakL1}). 

In a second part of the paper, we focus on the convergence of the associated Euler-Lagrange equations. To this end, we consider a strengthened set of assumptions, namely we assume that $f$ is strictly convex and differentiable with respect to the last variable. In Theorem \ref{thm.PDEs} we show that when $f_{\e}\rightharpoonup f_0$ in $L^d(D,\R^m)$, the unique weak solutions of the degenerate elliptic PDE
\begin{align*}
	-{\rm div}(\partial_{\xi }f(\w,\tfrac{\cdot}{\e},\nabla u))&=f_{\e}\quad\text{ on }D,
	\\
	u&=g\quad\text{ on }\partial D
\end{align*}
with finite energy converge almost surely to the unique weak solution of the PDE
\begin{align*}
	-{\rm div}(\nabla f_{\rm hom}(\nabla u))&=f_0\quad\text{ on }D,
	\\
	u&=g\quad\text{ on }\partial D.
\end{align*}
However, we emphasize that our result is naturally restricted to variational models, that is, the monotone operator $\partial f_{\xi}$ has to be the gradient of a potential. It would be interesting to study non-variational degenerate PDEs with variational techniques (see \cite{ArMo} for the approach in the uniformly elliptic setting). However, this is beyond the scope of the present paper. To deduce the above convergence statement for the Euler-Lagrange equations from our $\Gamma$-convergence result, two properties are of fundamental importance: the strict convexity and the differentiability of $f_{\rm hom}$. To prove these two properties, we establish a non-asymptotic formula for the homogenized integrand that involves a single minimization problem on the probability space (cf. Lemma \ref{l.F_pot_formula}). This formula is well-known in the non-degenerate case, but some care has to be taken when extending it to our setting. From the single minimization problem it is quite straightforward to show that strict convexity of the homogenized integrand is inherited from the heterogeneous integrands. To prove differentiability we show directly that the convex function  $f_{\rm hom}$ is upper semidifferentiable in the sense of \cite{BKK}. Let us mention that in the non-degenerate, deterministic, but non-periodic case the differentiability of the integrand of the $\Gamma$-limit (which in that setting exists up to subsequences) was proven under the assumption of convexity in $\xi$ and a local equicontinuity of the derivative of $\partial_{\xi}f_{\e}$ in \cite[Propisition 3.5]{GiPo}, while the non-convex case was treated in  \cite[Theorem 2.8]{ADMZ} under a global estimate on the modulus of continuity of $\partial_{\xi}f_{\e}$ (for the sake of completeness, we prove the differentiability of $f_{\rm hom}$ in the non-convex case with a corresponding degenerate modulus of continuity in Appendix \ref{app:0}).

The paper is organized as follows: in Section \ref{s.preliminaries} we introduce the precise framework and recall some notions from probability theory. We then present the main results in Section \ref{s.results}, while the proofs are contained in Section \ref{s.proofs}. In the appendix we show the differentiability of $f_{\rm hom}$ without convexity assumptions, a measurability result, and prove the complete continuity of the Sobolev embedding $W^{1,1}\hookrightarrow L^{d/(d-1)}$, the latter being independent of the rest of the paper.

\section{Preliminaries and notation}\label{s.preliminaries}
\subsection{General notation}
We fix $d\geq 2$. Given a measurable set $S\subset\R^d$, we denote by $|S|$ its $d$-dimensional Lebesgue measure. For $x\in\R^d$ we denote by $|x|$ the Euclidean norm and $B_{\rho}(x)$ denotes the open ball with radius $\rho>0$ centered at $x$.
Given $x_0\in\R^d$ and $\rho>0$ we set $Q_{\rho}(x_0)=x_0+(-\rho/2,\rho/2)^d$. We let $\mathbb{M}_d$ be the set of real-valued $d\times d$-matrices equipped with the operator-norm $|\cdot|$ induced from the Euclidean norm on $\R^d$. We further define $\mathbb{D}_d$ be the set of diagonal matrices in $\mathbb{M}_d$. For a measurable set with positive measure, we define $\dashint_S=\frac{1}{|S|}\int_S$. We use standard notation for $L^p$-spaces and Sobolev spaces $W^{1,p}$. In case of functions $W^{1,\infty}(D,\R^m)$, we always choose the Lipschitz-continuous representative (which exists since $D$ is an extension domain). The Borel $\sigma$-algebra on $\R^d$ will be denoted by $\mathcal{B}^d$, while we use $\mathcal{L}^d$ for the $\sigma$-algebra of Lebesgue-measurable sets.
Throughout the paper, we use the continuum parameter $\e$, but statements like $\e\to 0$ stands for an arbitrary sequence $\e_n\to 0$. Finally, the letter $C$ stand for a generic positive constant that may change every time it appears.

\subsection{Stationarity and ergodicity}
Let $\Omega=(\Omega,\mathcal{F},\mathbb{P})$ be a complete probability space. We recall the notion of measure-preserving group actions.
\begin{definition}[Measure-preserving group action]\label{def:group-action} A measure-preserving additive group action on $(\Omega,\F,\P)$ is a family $\{\tau_z\}_{z\in\R^d}$ of measurable mappings $\tau_z:\Omega\to\Omega$ satisfying the following properties:
\begin{enumerate}[label=(\arabic*)]
	\item\label{joint} (joint measurability) the map $(\w,z)\mapsto\tau_z(\w)$ is $\mathcal{F}\otimes\mathcal{L}^d-\mathcal{F}$-measurable;
	\item\label{inv} (invariance) $\P(\tau_z F)=\P(F)$, for every $F\in\F$ and every $z\in\R^d$;
	\item\label{group} (group property) $\tau_0=\rm id_\Omega$ and $\tau_{z_1+z_2}=\tau_{z_2}\circ\tau_{z_1}$ for every $z_1,z_2\in\R^d$.
\end{enumerate}
If, in addition, $\{\tau_z\}_{z\in\R^d}$ satisfies the implication
\begin{equation*}
	\mathbb{P}(\tau_zF\Delta F)=0\quad\forall\, z\in\R^d\implies \mathbb{P}(F)\in\{0,1\},
\end{equation*}
then it is called ergodic.
\end{definition}
We will use several times the additive ergodic theorem in the following form (see \cite[Proposition 2.1]{ArSm}):
\begin{theorem}[Additive ergodic theorem]\label{thm.additiv_ergodic}
Let $g\in L^1(\Omega)$ and $\{\tau_z\}_{z\in\R^d}$ be a measure-preserving, ergodic group action. Then a.s. for any bounded, open set $O\subset\R^d$ with Lipschitz boundary it holds that
\begin{equation*}
\lim_{t\to +\infty}\dashint_{tO}g(\tau_x\w)\dx=\mathbb{E}[g].
\end{equation*} 
\end{theorem}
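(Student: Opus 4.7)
The plan is to reduce the claim to the standard pointwise multi-parameter ergodic theorem for a $\Z^d$-action and then extend from cubes to general Lipschitz domains by a sandwich argument. After subtracting the mean, one may assume $\mathbb{E}[g]=0$.

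First, for the unit cube $Q_0 = [0,1)^d$, set $G(\omega) := \int_{Q_0} g(\tau_y \omega) \, \mathrm{d}y$. Joint measurability \ref{joint} together with Fubini's theorem ensures $G \in L^1(\Omega)$ with $\mathbb{E}[G]=0$. Applying the multi-parameter Birkhoff ergodic theorem to the measure-preserving $\Z^d$-action $\{\tau_z\}_{z\in\Z^d}$ acting on $G$ gives, $\mathbb{P}$-a.s.,
\begin{equation*}
\frac{1}{N^d}\sum_{z\in\{0,\ldots,N-1\}^d} G(\tau_z\omega) \longrightarrow \mathbb{E}[G\mid\mathcal{I}],
\end{equation*}
where $\mathcal{I}$ is the $\sigma$-algebra of $\Z^d$-invariant events. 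Ergodicity of the full $\R^d$-action forces $\mathbb{E}[G\mid\mathcal{I}]=\mathbb{E}[G]=0$, since the continuous averaging built into $G$ promotes any $\Z^d$-invariant event to an $\R^d$-invariant one. Rewriting the sum as the integral over $[0,N)^d$ and absorbing the annulus between $tO$ and $\lfloor t\rfloor O$ (whose volume is $O(t^{d-1})$, negligible against $t^d$) yields the statement for the unit cube; affine rescaling and translation give it for every cube $Q_\rho(x_0)$.

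Next, for a general bounded Lipschitz domain $O$ and $\eta>0$, choose disjoint cubes of side $\eta$ with $K_\eta^-\subset O\subset K_\eta^+$, where $K_\eta^\pm$ are finite unions of such cubes and $|K_\eta^+\setminus K_\eta^-|\leq C\,\mathcal{H}^{d-1}(\partial O)\,\eta$; here the Lipschitz regularity of $\partial O$ enters. Applying the first step to each cube of the partition and also to $|g|$ on the annular cubes yields
\begin{equation*}
\lim_{t\to\infty}\frac{1}{t^d}\int_{tK_\eta^\pm} g(\tau_x\omega)\, \mathrm{d}x = 0,\qquad \lim_{t\to\infty}\frac{1}{t^d}\int_{t(K_\eta^+\setminus K_\eta^-)}|g(\tau_x\omega)|\, \mathrm{d}x = |K_\eta^+\setminus K_\eta^-|\,\mathbb{E}[|g|].
\end{equation*}
Sandwiching the integral over $tO$ between those over $tK_\eta^\pm$ (with an error bounded by the integral of $|g(\tau_x\omega)|$ over $t(K_\eta^+\setminus K_\eta^-)$), then sending $t\to\infty$ and finally $\eta\to 0$ concludes the argument.

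The main obstacle is that the exceptional null set must be independent of $O$. This is addressed by fixing once and for all a countable dense family of cubes (say with rational centers and side-lengths) on which the first step holds simultaneously outside a single null set, and by checking that the sandwich argument in the second step only ever invokes cubes from this countable family.
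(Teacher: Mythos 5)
The paper does not prove this theorem; it cites it as \cite[Proposition 2.1]{ArSm}, so I can only evaluate your argument on its own terms. Your overall reduction scheme --- average $g$ over the unit cube to get $G$, apply the discrete multi-parameter Birkhoff theorem, pass from integer to real scales via annulus estimates, and then sandwich a Lipschitz domain between finite unions of rational cubes --- is the standard and correct route, and your handling of the uniform null set is sound.

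However, the sentence ``Ergodicity of the full $\R^d$-action forces $\mathbb{E}[G\mid\mathcal{I}]=\mathbb{E}[G]$, since the continuous averaging built into $G$ promotes any $\Z^d$-invariant event to an $\R^d$-invariant one'' is false as stated, and this is where the real work of the reduction lies. The $\sigma$-algebra $\mathcal{I}$ of $\Z^d$-invariant events can be strictly larger than the $\sigma$-algebra of $\R^d$-invariant events (consider $\R^d$ acting by translation on the torus $\R^d/\Z^d$: every event is $\Z^d$-invariant, yet the $\R^d$-action is ergodic), so ergodicity of $\{\tau_z\}_{z\in\R^d}$ does not make $\mathcal{I}$ trivial, and $\mathbb{E}[G\mid\mathcal{I}]$ is not automatically constant. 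What \emph{is} true, and what you should prove instead, is that the specific Birkhoff limit
\begin{equation*}
\bar G(\w)=\lim_{N\to\infty}\frac{1}{N^d}\sum_{z\in\{0,\dots,N-1\}^d}G(\tau_z\w)=\lim_{N\to\infty}\frac{1}{N^d}\int_{[0,N)^d}g(\tau_y\w)\dy
\end{equation*}
is invariant under $\tau_h$ for every $h\in\R^d$, not merely for $h\in\Z^d$. This requires an argument of exactly the same annulus type you already use elsewhere: the symmetric difference $(h+[0,N)^d)\,\Delta\,[0,N)^d$ lies in a boundary layer of fixed width, and the contribution of $|g(\tau_y\w)|$ over this layer is $o(N^d)$ because the averages of $|g|$ over $[-M,N+M)^d$ and $[M,N-M)^d$ converge to the same limit. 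Once $\bar G$ is known to be $\R^d$-invariant, ergodicity of the $\R^d$-action forces it to be a.s.\ constant, and taking expectations identifies the constant as $\mathbb{E}[G]=\mathbb{E}[g]$. With this replacement the proof goes through; without it, the key step is unjustified.
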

\subsection{Framework and assumptions}
Fix an open, bounded set $D\subset \R^d$ $(d\geq 2$) with Lipschitz boundary and let $(\Omega,\mathcal{F},\mathbb{P})$ be a complete probability space. For $\e>0$, we consider integral functionals defined on $L^1(D,\R^m)$ with domain contained in $W^{1,1}(D,\R^m)$, taking the form
\begin{equation*}
	F_{\e}(\w,u,D)=\int_D f(\w,\tfrac{x}{\e},\nabla u(x))\,\mathrm{d}x\in [0,+\infty]
\end{equation*}
with the integrand $f$ satisfying the following assumptions:
\begin{assumption}\label{a.1} The function $f:\Omega\times\R^d\times\R^{m\times d}\to [0,+\infty)$ is $\mathcal{F}\otimes\mathcal{L}^d\otimes\mathcal{B}^{m\times d}$-measurable and
\begin{itemize}
	\item[(A1)] for all $\w\in\Omega$ and all $x\in\R^d$ the map $\xi\mapsto f(\w,x,\xi)$ is lower semicontinuous;
	
	\vspace*{2.5mm}
	
	\item[(A2)] let $p\in (1,+\infty)$. For all $\w\in\Omega,x\in\R^d$ and $\xi\in \R^{m\times d}$ it holds that
	\begin{equation*}
		c\,|\xi A(\w,x)|^p\leq f(\w,x,\xi)\leq |\xi A(\w,x)|^p+\Lambda(\w,x),
	\end{equation*}
	with $c>0$ and jointly measurable functions $A:\Omega\times\R^d\to \mathbb{D}_{d}$ and $\Lambda:\Omega\times\R^d\to [0,+\infty)$ such that $|A(\cdot,0)|^p,\Lambda(\cdot,0)\in L^1(\Omega)$ and $|A(\cdot,0)^{-1}|^{p/(p-1)}\in L^{1}(\Omega)$;
	
	\vspace*{2.5mm}
	
	\item[(A3)] there exists a measure-preserving, ergodic group action $\{\tau_{z}\}_{z\in\R^d}$ such that 
	\begin{equation*}
		\begin{split}
			f(\tau_z\w,x,\xi)&=f(\w,x+z,\xi),
			\\
			A(\tau_z\w,x)&=A(\w,x+z) 
			\\
			\Lambda(\tau_z\w,x)&=\Lambda(\w,x+z)
		\end{split}
	\end{equation*}
for all $(z,\w,x,\xi)\in \R^d\times\Omega\times\R^d\times\R^{m\times d}$.
\end{itemize}
\end{assumption}
\begin{remark}\label{r.on_assump}
\begin{itemize}
	\item[a)] It would be more natural to require the properties in (A1) and (A2) for almost every $\w\in\Omega$ and almost every $x\in\R^d$. Assumption (A3), however, can actually serve as a definition given the random functions $f(\w,0,\xi)$, $A(\w,0)$ and $\Lambda(\w,0)$. If they satisfy (A1) and (A2) almost surely for $x=0$, then by \cite[Lemma 7.1]{JKO} properties (A1) and (A2) hold for almost every $\w\in\Omega$ and for every $x\in \R^d\setminus N_{\w}$ with a null set $N_{\w}\subset\R^d$, while (A3) holds pointwise. Our results remain valid in this setting with obvious modifications in the proofs.
	\item[b)] Assumption (A1) will be only used to prove measurability of the stochastic process whose limit defines the homogenized integrand. One can replace (A1) by upper semicontinuity, which simplifies the proof of measurability since in this case an infimum can be taken over a countable dense set.
	\item[c)] The a priori weaker growth condition
	\begin{equation*}
		c|\xi A(\w,x)|^p-\Lambda(\w,x)\leq f(\w,x,\xi)\leq |\xi A(\w,x)|^p+\Lambda(\w,x)
	\end{equation*}
	can be treated by considering the new integrand $\widetilde{f}(\w,x,\xi)=f(\w,x,\xi)+\Lambda(\w,x)$, which satisfies Assumption \ref{a.1} with the weight $\widetilde{\Lambda}=2\Lambda$, and which yields the same $\Gamma$-limit except an additive constant given by $\mathbb{E}[\Lambda(\cdot,0)]|D|$ (this is a consequence of the ergodic theorem \ref{thm.additiv_ergodic}). The growth condition (A2) simplifies some estimates.
	\item[d)] Stationarity (A3) and the stochastic integrability assumptions in (A2) together with Fubini's theorem show that for a.e. $\w\in\Omega$ it holds that $|A(\w,\cdot)|^p,\Lambda(\w,\cdot),|A(\w,\cdot)^{-1}|^{p/(p-1)}\in L^1_{\rm loc}(\R^d)$.
\end{itemize}
\end{remark}
As explained in the introduction, we also consider a set of strengthened assumptions to obtain further results on the stochastic homogenization of degenerate nonlinear elliptic PDEs in divergence form.
\begin{assumption}\label{a.2}
In addition to Assumption \ref{a.1}, for all $\w\in\Omega$ and all $x\in\R^d$, the function $\xi\mapsto f(\w,x,\xi)$ is differentiable and strictly convex.
\end{assumption} 
\section{Main results}\label{s.results}
In this section we state the results of this paper. We first state the $\Gamma$-convergence result without boundary conditions and external forces. For reader's convenience we recall that
\begin{equation*}
F_{\e}(\w,u,D)=
\begin{cases}
	\displaystyle \int_D f(\w,\tfrac{x}{\e},\nabla u(x))\dx &\mbox{if $u\in W^{1,1}(D,\R^m)$},
	\\
	\\ +\infty &\mbox{otherwise on $L^1(D,\R^m)$.}
\end{cases}
\end{equation*}
\begin{theorem}\label{thm.Gamma_pure}
Under Assumption \ref{a.1}, almost surely as $\e\to 0$, the random integral functionals 
$u\mapsto F_{\e}(\w,u,D)$ do $\Gamma$-converge in $L^1(D,\R^m)$ to the deterministic integral functional $F_{\rm hom}:L^1(D;\R^m)\to [0,+\infty]$ given by
\begin{equation*}
F_{\rm hom}(u)=	\int_D f_{\rm hom}(\nabla u(x))\dx\quad\text{ if }u\in W^{1,p}(D,\R^m)
\end{equation*}
and $+\infty$ otherwise. The integrand is given by the asymptotic cell-formula
\begin{equation*}
f_{\rm hom}(\xi)=\lim_{t\to +\infty}\frac{1}{t^d}\inf\{F_1(\w,u,(0,t)^d):\,u\in \xi x+W^{1,1}_0((0,t)^d,\R^m)\}.
\end{equation*}
Moreover, the function $\xi\mapsto f_{\rm hom}(\xi)$ is continuous and satisfies the two-sided $p$-growth condition
\begin{equation*}
	c_0|\xi|^p\leq f_{\rm hom}(\xi)\leq C_0|\xi|^p+C_1
\end{equation*}
with $c_0=c\,\mathbb{E}[|A(\cdot,0)^{-1}|^{p/(p-1)}]^{1-p}$, $C_0=\sup_{|\eta|=1}\mathbb{E}[|\eta A(\cdot,0)|^p]$ and $C_1=\mathbb{E}[\Lambda(\cdot,0)]$. Here $c$ is given by Assumption \ref{a.1} and the supremum with respect to $\eta$ runs over $\R^{m\times d}$.
\end{theorem}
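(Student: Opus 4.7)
The plan is to adapt the Dal Maso--Modica strategy to the degenerate setting, with the technical novelties forced by the fact that compactness only holds in $W^{1,1}$ rather than $W^{1,p}$. First I would define, for fixed $\xi \in \R^{m\times d}$ and cube $Q \subset \R^d$, the cell process
\begin{equation*}
\mu_\xi(\w,Q) = \inf\{F_1(\w,u,Q) : u - \xi x \in W^{1,1}_0(Q,\R^m)\},
\end{equation*}
verify its measurability (using (A1) and Lemma \ref{l.measurable}), subadditivity under disjoint partitioning of cubes by gluing competitors on the interfaces, stationarity under $\{\tau_z\}$ inherited from (A3), and the bound $\mathbb{E}[\mu_\xi(\cdot,Q)/|Q|] \leq C(|\xi|^p+1)$ by testing with $u(x) = \xi x$ and applying (A2). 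The Akcoglu--Krengel subadditive ergodic theorem then yields the deterministic limit $f_{\rm hom}(\xi)$ on a full-probability event, and a countable dense set together with (eventual) continuity makes the event $\xi$-independent. The two-sided $p$-growth then follows: the upper bound from the competitor $u(x)=\xi x$ and Theorem \ref{thm.additiv_ergodic} applied to $|A(\w,x)|^p$ and $\Lambda(\w,x)$; the lower bound from the Hölder estimate
\begin{equation*}
|\xi| = \Big|\dashint_{(0,t)^d}\nabla u\dx\Big| \leq \Big(\dashint |\nabla u\,A(\w,x)|^p\dx\Big)^{1/p}\Big(\dashint |A(\w,x)^{-1}|^{p/(p-1)}\dx\Big)^{(p-1)/p},
\end{equation*}
valid for any admissible competitor, combined with the lower bound in (A2) and the ergodic theorem for $|A^{-1}|^{p/(p-1)}$. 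Continuity of $f_{\rm hom}$ follows from the quasiconvexity inherited via the cell formula together with the two-sided growth.

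The same Hölder inequality applied on $D$ gives compactness of energy-bounded sequences in $W^{1,1}(D,\R^m)$, hence in $L^1(D,\R^m)$ via Rellich, with limit in $W^{1,p}(D,\R^m)$ by lower semicontinuity applied to $\nabla u_\e\,A(\w,\cdot/\e)$. For the $\Gamma$-liminf inequality I would use the Fonseca--Müller blow-up method: along a sequence $u_\e \to u$ in $L^1$ with bounded energy, extract a weak-$*$ limit $\mu$ of the measures $f(\w,\cdot/\e,\nabla u_\e)\dx\llcorner D$, and at a.e. Lebesgue point $x_0$ of $\nabla u$ bound $d\mu/dx(x_0)$ from below by the cell-problem infimum at $\nabla u(x_0)$ after rescaling and adjusting the boundary of $u_\e$ to affine data on a small cube. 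For the $\Gamma$-limsup I would construct recovery sequences for affine targets from near-optimal competitors in the cell problem, extend to piecewise affine targets by gluing, and conclude by density of piecewise affine functions in $W^{1,p}(D,\R^m)$ combined with the upper growth bound and continuity of $f_{\rm hom}$.

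The main obstacle, as the introduction stresses, is the fundamental estimate required both in the blow-up adjustment and in the gluing step: a standard convex combination $\varphi u_\e + (1-\varphi)v_\e$ produces via (A2) an error term controlled by
\begin{equation*}
\int_{\{\nabla\varphi\neq 0\}} |u_\e - v_\e|^p\, |\nabla\varphi|^p\,|A(\w,x/\e)|^p\dx,
\end{equation*}
which is not controllable under our minimal integrability $|A(\cdot,0)|^p\in L^1(\Omega)$ alone. The resolution combines the two ingredients highlighted in the introduction: a vectorial truncation that forces $u_\e - v_\e$ to be uniformly bounded in $L^\infty$ at the cost of an arbitrarily small energy loss, reducing the problem to estimating $\int_{\{\nabla\varphi\neq 0\}} |A(\w,x/\e)|^p\dx$; and the weak $L^1(D)$ convergence of the stationary oscillating weight $|A(\w,\cdot/\e)|^p$ on arbitrary Borel sets, supplied by Lemma \ref{l.weakL1} via a biting-limit identification, which makes the latter integral arbitrarily small by choosing the transition layer of small Lebesgue measure. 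This is the technical heart of the argument and the place where the degeneracy genuinely forces us beyond the classical Dal Maso--Modica framework.
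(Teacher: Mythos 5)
Your architecture matches the paper's almost step for step: cell process $\mu_\xi$, subadditive ergodic theorem, blow-up for the liminf, density plus near-optimal cell competitors for the limsup, and the two degenerate-specific ingredients (vectorial truncation plus weak $L^1$ convergence of $|A(\w,\cdot/\e)|^p$ from Lemma~\ref{l.weakL1}). The growth bounds and the compactness statement are argued as in the paper.

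There is, however, a genuine gap in the way you obtain the continuity of $f_{\rm hom}$ and the $\xi$-independence of the exceptional null set. You claim continuity follows from ``quasiconvexity inherited via the cell formula together with the two-sided growth.'' If quasiconvexity is meant to come from $\Gamma$-convergence (the $\Gamma$-limit must be $L^1$-lower semicontinuous), that is circular: your density argument for the limsup already presupposes continuity of $f_{\rm hom}$ in order to pass from piecewise affine targets to general $W^{1,p}$ targets. If instead you mean quasiconvexity directly from the cell formula, this is a nontrivial claim that in the degenerate setting itself needs the same cut-off machinery, and is not sketched. Moreover, passing from rational $\xi$ to all $\xi$ in the almost-sure convergence is not merely a matter of continuity of the deterministic limit: you need a quantitative comparison of the random processes $t\mapsto|tQ|^{-1}\mu_\xi(\w,tQ)$ for nearby $\xi,\xi_0$ with error controlled only by ergodic averages. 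In the non-degenerate case this follows from the global Lipschitz estimate of $f$ in $\xi$; here that estimate is weighted by $|A(\w,x)|$ and degenerates. The paper resolves both issues at once in Lemma~\ref{l.existence_f_hom} by comparing $\mu_\xi$ on $tQ$ with $\mu_{\xi_0}$ on a slightly inflated/deflated cube $tQ(1\pm\delta)$ via a cut-off in the annulus, yielding an error of the form $C(\delta^{-p}|\xi-\xi_0|^p+|\xi|^p+|\xi_0|^p+1)\int_{\mathrm{annulus}}(|A|^p+\Lambda)$, which the additive ergodic theorem controls as $t\to\infty$ and which vanishes as $\delta\to0$. This single estimate gives the $\xi$-uniform convergence and the continuity of $f_{\rm hom}$ before any $\Gamma$-convergence is established, and without invoking quasiconvexity. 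A minor point: the fundamental estimate is only needed in the liminf (boundary adjustment in the blow-up); in the unconstrained limsup the cell problems on adjacent cubes share exactly affine boundary data, so the gluing is free and the only loss is the boundary layer around each simplex, handled by the plain ergodic theorem.
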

\begin{remark}[On the optimality of the integrability assumptions]\label{r.optimal}
	In \cite[Remark 2.3]{NSS} the optimality of the integrability assumption $|A(\cdot,0)|^p\in L^1(\Omega)$ and $|A(\cdot,0)^{-1}|^{p/(p-1)}\in L^1(\Omega)$ was discussed for the discrete setting in the following form (stated for simplicity in the continuum): given $k\in\N$ and $Q=(0,1)^d$, the formula
	\begin{equation*}
	f_{\rm per}(\xi):=\lim_{k\to +\infty}\mathbb{E}\left[\inf\left\{\dashint_{kQ}f(\w,x,\xi+\nabla u(x))\dx:\,u\in W^{1,1}_{\rm per}(kQ,\R^m)\right\}\right]
	\end{equation*}
	with periodic boundary conditions can degenerate in the sense that $f_{\rm per}(\xi)=+\infty$ or $f_{\rm per}(\xi)=0$ occurs for some examples and some $\xi$ when one of the two integrability conditions is violated. However, strictly speaking the equality $f_{\rm hom}=f_{\rm per}$ is usually proven under the assumption that $\Gamma$-convergence holds (see, for instance, \cite[Section 4.7]{NSS}) and in general one expects only the inequality $f_{\rm per}\leq f_{\rm hom}$, so just the degeneracy $f_{\rm per}(\xi)=+\infty$ transfers to $f_{\rm hom}(\xi)$. In Example \ref{ex:optimal} we slightly refine the argument of \cite{NSS} to obtain a stationary, ergodic integrand of the form $f(\w,x,\xi)=|\xi A(\w,x)|^p$ such that almost surely 
	\begin{align*}
	&\lim_{k\to +\infty}\inf\left\{\dashint_{kQ}f(\w,x,\xi+\nabla u(x))\dx:\,u\in W^{1,1}_{0}(kQ)\right\}
	\\
	=&
	\begin{cases}
	+\infty &\mbox{if $\xi\notin\R e_1$ and $|A(\cdot,0)|^p\notin L^1(\Omega)$,}
	\\
	0 &\mbox{if $\xi\in\R e_1$ and $|A(\cdot,0)^{-1}|^{p/(p-1)}\notin L^1(\Omega)$, but $|A(\cdot,0)|^p\in L^1(\Omega)$.}
	\end{cases}
	\end{align*}
	If both integrability conditions are violated, we are not able to to pass from periodic boundary conditions to Dirichlet boundary conditions in the case $\xi\in\R e_1$. Nevertheless, this example shows that our integrability assumptions are optimal for stationary, ergodic media in the sense that in general the multi-cell formula with affine boundary conditions degenerates when one of the two assumptions is violated.
\end{remark}
We next focus on Dirichlet boundary conditions. We also include a linear forcing term. Given $g\in W_{\rm loc}^{1,\infty}(\R^d,\R^m)$ and $f_{\e}\in L^d(D,\R^m)$, we define the constrained functional
\begin{equation}\label{eq:constrained}
F_{\e,f_{\e},g}(\w,u,D)=
\begin{cases}
	F_{\e}(\w,u,D)-\int_D f_{\e}(x)\cdot u(x)\dx &\mbox{if $u\in g+W^{1,1}_0(D,\R^m)$,}
	\\
	\\
	+\infty &\mbox{otherwise on $L^1(D,\R^m)$.}
\end{cases}
\end{equation}
Note that the integral involving $f_{\e}$ is finite for $u\in W^{1,1}(D,\R^m)$ due to the Sobolev embedding.
\begin{theorem}\label{thm:Dirichlet_and_forces}
Let $g\in W_{\rm loc}^{1,\infty}(\R^d,\R^m)$ and $f_{\e},f_0\in L^{d}(D,\R^m)$ be such that $f_{\e}\rightharpoonup f_0$ in $L^d(D,\R^m)$. Under Assumption \ref{a.1}, almost surely as $\e\to 0$, the random functionals $u\mapsto F_{\e,f_{\e},g}(\w,u,D)$ do $\Gamma$-converge in $L^1(D,\R^m)$ to the deterministic integral functional $F_{{\rm hom},f_0,g}:L^1(D;\R^m)\to [0,+\infty]$ given by
\begin{equation*}
\int_D f_{\rm hom}(\nabla u(x))\dx-\int_Df_0(x)\cdot u(x)\dx \quad\text{ if }u\in g+W^{1,p}_0(D,\R^m)
\end{equation*}
and $+\infty$ otherwise. The integrand $f_{\rm hom}$ is given by Theorem \ref{thm.Gamma_pure}. Moreover, any sequence $u_{\e}$  such that 
\begin{equation*}
	\limsup_{\e\to 0}F_{\e,f_{\e},g}(\w,u_{\e},D)<+\infty
\end{equation*}
is weakly compact in $W^{1,1}(D,\R^m)$ and hence strongly compact in $L^{d/(d-1)}(D,\R^m)$. 
\end{theorem}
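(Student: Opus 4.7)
My plan is to deduce the $\Gamma$-convergence from the unconstrained Theorem~\ref{thm.Gamma_pure} by enforcing the boundary datum via a cutoff gluing near $\partial D$ and handling the linear force via weak-strong convergence, both of which rely on first proving the compactness claim. Suppose $\limsup_{\e\to 0}F_{\e,f_\e,g}(\w,u_\e,D)\leq M<+\infty$. Writing $|\nabla u_\e|=|\nabla u_\e A(\w,\tfrac{x}{\e})\,A(\w,\tfrac{x}{\e})^{-1}|$ and applying H\"older with exponents $p$ and $p/(p-1)$,
\begin{equation*}
	\int_D|\nabla u_\e|\dx\leq c^{-1/p}\,F_\e(\w,u_\e,D)^{1/p}\left(\int_D|A(\w,\tfrac{x}{\e})^{-1}|^{p/(p-1)}\dx\right)^{(p-1)/p},
\end{equation*}
and the last factor is bounded uniformly in $\e$ for a.e.\ $\w$ by Theorem~\ref{thm.additiv_ergodic} applied to $|A(\cdot,0)^{-1}|^{p/(p-1)}\in L^1(\Omega)$. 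Since $u_\e-g\in W^{1,1}_0(D,\R^m)$, Sobolev--Poincar\'e gives $\|u_\e\|_{L^{d/(d-1)}}\leq C(1+\|\nabla u_\e\|_{L^1})$, and H\"older in $L^d$ vs.\ $L^{d/(d-1)}$ bounds the force by $\|f_\e\|_{L^d}\|u_\e\|_{L^{d/(d-1)}}$. Combining yields $F_\e(\w,u_\e,D)\leq M+C+C\,F_\e(\w,u_\e,D)^{1/p}$, and since $1/p<1$, Young's inequality absorbs the right-hand side and gives $\|u_\e\|_{W^{1,1}}\leq C$. Equi-integrability of $\nabla u_\e$ follows by applying the same H\"older estimate on arbitrary Borel subsets combined with the weak $L^1$-convergence of $|A(\w,\cdot/\e)^{-1}|^{p/(p-1)}$ (the analogue of Lemma~\ref{l.weakL1} for the inverse weight), giving weak compactness in $W^{1,1}(D,\R^m)$; Theorem~\ref{thm.embedding} upgrades this to strong compactness in $L^{d/(d-1)}(D,\R^m)$.

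For the $\Gamma$-liminf, let $u_\e\to u$ in $L^1(D,\R^m)$ with $\liminf F_{\e,f_\e,g}(\w,u_\e,D)<+\infty$. Along the attaining subsequence the compactness step yields $u_\e\rightharpoonup u$ in $W^{1,1}$ and $u_\e\to u$ in $L^{d/(d-1)}$, so $\int_Df_\e\cdot u_\e\dx\to\int_Df_0\cdot u\dx$ by weak-strong convergence. Theorem~\ref{thm.Gamma_pure} provides $\liminf F_\e(\w,u_\e,D)\geq F_{\rm hom}(u)$, hence $u\in W^{1,p}(D,\R^m)$. Since $W^{1,1}_0$ is weakly closed in $W^{1,1}$ and $D$ has Lipschitz boundary, $u-g\in W^{1,1}_0\cap W^{1,p}(D,\R^m)=W^{1,p}_0(D,\R^m)$, so $u$ is admissible for the limit functional, which gives the liminf inequality after subtracting the force term.

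For the $\Gamma$-limsup, fix $u\in g+W^{1,p}_0(D,\R^m)$ and pick $v_\e\to u$ in $L^1$ with $F_\e(\w,v_\e,D)\to F_{\rm hom}(u)$ from Theorem~\ref{thm.Gamma_pure}. To match the datum, choose $\chi_\delta\in C_c^\infty(D)$ with $\chi_\delta\equiv1$ on $\{\dist(\cdot,\partial D)>2\delta\}$ and $|\nabla\chi_\delta|\leq C/\delta$, and set $u_\e^\delta:=\chi_\delta v_\e+(1-\chi_\delta)g\in g+W^{1,1}_0(D,\R^m)$. Expanding the gradient and using (A2) bounds $F_\e(\w,u_\e^\delta,D)$ by $F_\e(\w,v_\e,D)$ plus a boundary-layer contribution on $D_\delta:=\{\dist(\cdot,\partial D)<2\delta\}$ of the form $C\int_{D_\delta}(\|\nabla g\|_\infty^p+\delta^{-p}|v_\e-g|^p)|A(\w,x/\e)|^p\dx+\int_{D_\delta}\Lambda(\w,x/\e)\dx$. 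Lemma~\ref{l.weakL1} (and Theorem~\ref{thm.additiv_ergodic} for $\Lambda$) yields weak $L^1$-convergence of $|A(\w,\cdot/\e)|^p$ and $\Lambda(\w,\cdot/\e)$; the absolutely continuous limits vanish in the double limit $\e\to 0$, $\delta\to 0$ on the shrinking layer $D_\delta$, controlling the first and third integrals. The transition term $\delta^{-p}|v_\e-g|^p|A(\w,x/\e)|^p$ is the fundamental estimate of the paper: after the vectorial $L^\infty$-truncation of $v_\e$ used in the proof of Theorem~\ref{thm.Gamma_pure}, $|v_\e-g|$ on the transition layer is under control and the same weak $L^1$-convergence of $|A|^p$ absorbs the product. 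A diagonal extraction in $(\delta,\e)$ produces the recovery sequence $u_\e\to u$, and the force term converges by weak-strong duality as in the liminf. The main obstacle is precisely this transition term, where the absence of any uniform $L^{1+\sigma}$ bound on $|A(\w,\cdot/\e)|^p$ precludes the classical Lipschitz-truncation or $L^p$-strong-convergence trick; combining the vectorial $L^\infty$-truncation with the weak $L^1$-convergence from Lemma~\ref{l.weakL1}, both core tools of the paper, resolves it.
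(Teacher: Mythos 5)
Your compactness argument and $\Gamma$-liminf step coincide with the paper's (Lemma~\ref{l.boundarycompactness} and Proposition~\ref{p.lb_contrained}) and are correct. The $\Gamma$-limsup has a genuine gap. With a single cutoff $\chi_\delta$, expanding $\nabla u_\e^\delta=\chi_\delta\nabla v_\e+(1-\chi_\delta)\nabla g+(v_\e-g)\otimes\nabla\chi_\delta$ on the transition shell $\{0<\chi_\delta<1\}$ yields, besides the terms you list, a contribution $C\int_{\{0<\chi_\delta<1\}}|\nabla v_\e\,A(\w,\cdot/\e)|^p\dx$ with $C>1$ coming from the elementary inequality $|a+b+c|^p\le C(|a|^p+|b|^p+|c|^p)$. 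This cannot be absorbed into $F_\e(\w,v_\e,D)$ because of the extra multiplicative constant, and there is no a priori reason it is small for fixed $\delta$: the abstract recovery sequence furnished by Theorem~\ref{thm.Gamma_pure} may concentrate energy near $\partial D$. The paper resolves this with the De~Giorgi averaging trick (Proposition~\ref{p.ub_constrained}, estimate~\eqref{eq:averaging_bc}): one inserts $N$ nested cutoffs and selects by pigeonhole the shell carrying at most a $1/N$-fraction of the total energy, replacing $C$ by $1+C/N$. One could alternatively invoke the unconstrained $\Gamma$-liminf inequality on the open set $D\setminus\overline{D_\delta}$ to deduce $\limsup_\e F_\e(\w,v_\e,D_\delta)\le F_{\rm hom}(u)-F_{\rm hom}(u,D\setminus \overline{D_\delta})\to 0$ as $\delta\to 0$, but your proposal does neither.

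There is a second gap in the transition term $\delta^{-p}\int_{D_\delta}|v_\e-g|^p|A(\w,\cdot/\e)|^p\dx$. This vanishes as $\e\to 0$ for fixed $\delta$ only because, after the $L^\infty$-truncation, $v_{\e,\delta}\to u$ a.e.\ (along a subsequence) and $u\equiv g$ on the boundary layer, so $|v_{\e,\delta}-g|^p\to 0$ a.e.\ while remaining bounded, and then the bounded-a.e.-convergent times weakly-$L^1$-convergent product rule (Fonseca--Leoni, Proposition~2.61) gives limit $0$. This requires the preliminary density reduction to $u-g\in C_c^\infty(D,\R^m)$, which the paper performs at the start of Proposition~\ref{p.ub_constrained} and you omit: for generic $u\in g+W^{1,p}_0(D,\R^m)$ the quantity $|u-g|$ does not vanish on $D_\delta$, the integral tends to a positive limit, and the prefactor $\delta^{-p}$ destroys the estimate. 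Your phrase ``$|v_\e-g|$ under control'' (i.e.\ bounded) is not sufficient; the pointwise limit must be zero. Finally note the order of operations in the paper: one truncates first to obtain the uniform $L^\infty$ bound and only then glues with $g$; your proposal glues first and invokes truncation only parenthetically, which leaves the $L^\infty$ bound unavailable in the step where it is needed.
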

\begin{remark}
The assumption $g\in W^{1,\infty}_{\rm loc}(\R^d,\R^m)$ ensures that $u=g$ is admissible in the infimum problem. Since $|A(\w,\cdot)|^p$ only belongs to $L^1_{\rm loc}(\R^d)$, the energy of $g$ is guaranteed to be finite only if $\nabla g\in L^{\infty}(D,\R^{m\times d})$. The integrability imposed on the external force $f_{\e}$ is the most general that still guarantees coercivity of the tilted functional under Dirichlet-boundary conditions. We could instead consider a fixed element $f_0$ in the dual space $W^{1,1}(D,\R^m)'$. Taking an $\e$-dependent family in this dual space would lead to a technical notion of convergence, that we do not discuss here.
\end{remark}
\begin{remark}[Existence and convergence of minimizers]\label{r.existence}
By the general theory of $\Gamma$-convergence, Theorem~\ref{thm:Dirichlet_and_forces} implies the convergence of (almost) minimizers to minimizers of the $\Gamma$-limit. To ensure existence of minimizers of $u\mapsto F_{\e,f_{\e},g}(\w,u,D)$ it suffices to assume that the map $\xi\mapsto f(\w,x,\xi)$ is quasiconvex. Indeed, then the functional is lower semicontinuous with respect to weak convergence in $W^{1,1}(D,\R^m)$ as can be seen as follows: for fixed ($\w,x)\in\Omega\times\R^d$ and given $j\in\N$, consider the quasiconvex envelope $f_j(\w,x,\xi)$ of $\xi\mapsto\min\{j(1+|\xi|),f(\w,x,\xi)\}$. Clearly $0\leq f_j(\w,x,\xi)\leq j(1+|\xi|)$, so that by \cite[Proposition 9.5]{Da} the function $f_j(\w,\cdot,\cdot)$ is a Carath\'eodory-function and by \cite[Theorem 8.11]{Da} the corresponding integral functional is lower semicontinuous with respect to weak convergence in $W^{1,1}(D,\R^m)$. As shown in the proof of \cite[Lemma 4.1]{Kristensen} it holds that $f_j(\w,x,\xi)\uparrow f(\w,x,\xi)$ as $j\to +\infty$, so that $F_{\e}(\w,\cdot)$ can be written as the supremum of weakly lower semicontinuous functions and is therefore weakly lower semicontinuous itself. The existence of minimizers can then be proven as in Lemma \ref{l.epsexistence}.	
\end{remark}
Our third theorem covers the case of an $\e$-dependent obstacle constraint on $D$. In what follows, vector-valued inequalities are understood componentwise. Given $\varphi_{\e}\in W^{1,\infty}(D,\R^m)$ such that $g\geq\varphi_{\e}$ on $\partial D$, we define
\begin{equation*}
F^{\varphi_{\e}}_{\e,f_{\e},g}(\w,u,D)=
\begin{cases}
F_{\e,f_{\e},g}(\w,u,D) &\mbox{if $u\geq\varphi_{\e}$ a.e. in $D$,}
\\
+\infty &\mbox{otherwise,}	
\end{cases}
\end{equation*}
where $F_{\e,f_{\e},g}$ is defined in \eqref{eq:constrained}.
\begin{theorem}\label{thm.obstacle}
Let $g$ and $f_{\e},f_0$ be as in Theorem \ref{thm:Dirichlet_and_forces} and let $\varphi_{\e},\varphi\in W^{1,\infty}(D,\R^m)$ be such that $\varphi_{\e}\overset{*}{\rightharpoonup}\varphi$ in $W^{1,\infty}(D,\R^m)$. Assume moreover that $g\geq\varphi_{\e}$ on $\partial D$ for all $\e>0$. Under Assumption \ref{a.1}, almost surely as $\e\to 0$, the random functionals $u\mapsto F^{\varphi_{\e}}_{\e,f_{\e},g}(\w,u,D)$ do $\Gamma$-converge in $L^1(D,\R^m)$ to the deterministic integral functional $F^{\varphi}_{{\rm hom},f_0,g}:L^1(D;\R^m)\to [0,+\infty]$ given by
\begin{equation*}
	\int_D f_{\rm hom}(\nabla u(x))\dx-\int_Df_0(x)\cdot u(x)\dx \quad\text{ if }u\in g+W^{1,p}_0(D,\R^m)\text{ and }u\geq\varphi\text{ a.e. in }D
\end{equation*}
and $+\infty$ otherwise. The integrand $f_{\rm hom}$ is given by Theorem \ref{thm.Gamma_pure}. Moreover, any sequence $u_{\e}$  such that 
\begin{equation*}
	\limsup_{\e\to 0}F^{\varphi_{\e}}_{\e,f_{\e},g}(\w,u_{\e},D)<+\infty
\end{equation*}
is weakly compact in $W^{1,1}(D,\R^m)$ and hence strongly compact in $L^{d/(d-1)}(D,\R^m)$. 
\end{theorem}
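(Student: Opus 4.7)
The plan is to deduce this from Theorem~\ref{thm:Dirichlet_and_forces} by handling the obstacle via pointwise maximum, combined with a preliminary approximation that makes the obstacle strict in the interior. A key preliminary observation is that $\varphi_\e\to\varphi$ uniformly on $\overline{D}$, which follows from $\varphi_\e\overset{*}{\rightharpoonup}\varphi$ in $W^{1,\infty}(D,\R^m)$ together with Arzelà--Ascoli.

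Compactness and the liminf inequality are inherited directly from Theorem~\ref{thm:Dirichlet_and_forces}. Compactness follows since the obstacle only further restricts the admissible class. For the liminf, given $u_\e\to u$ in $L^1$ with $u_\e\geq\varphi_\e$ a.e.\ and uniformly bounded energy, extracting an a.e.\ convergent subsequence and using the uniform convergence $\varphi_\e\to\varphi$ yields $u\geq\varphi$ a.e., so the bound from Theorem~\ref{thm:Dirichlet_and_forces} already gives exactly $F^{\varphi}_{{\rm hom},f_0,g}(u)$.

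The limsup inequality is where the real work lies. Fix $u\in g+W^{1,p}_0(D,\R^m)$ with $u\geq\varphi$ a.e. Choose an exhaustion $K_j\subset\subset D$ with $\mathrm{dist}(K_j,\partial D)\simeq j^{-1/2}$ and cutoffs $\zeta_j\in C_c^\infty(D,[0,1])$ equal to $1$ on $K_j$, so that $\|\nabla\zeta_j\|_{L^p}=O(\sqrt{j})$. Set $u_j:=u+j^{-1}\zeta_j\mathbf{1}$ with $\mathbf{1}=(1,\ldots,1)^T\in\R^m$; then $u_j\in g+W^{1,p}_0(D,\R^m)$, $u_j\to u$ in $W^{1,p}$, and $u_j\geq\varphi+j^{-1}\mathbf{1}$ componentwise on $K_j$. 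Continuity and the $p$-growth of $f_{\rm hom}$ (Theorem~\ref{thm.Gamma_pure}) yield $F^{\varphi}_{{\rm hom},f_0,g}(u_j)\to F^{\varphi}_{{\rm hom},f_0,g}(u)$. For each $j$, let $\tilde u_\e^{(j)}$ be a recovery sequence for $u_j$ from Theorem~\ref{thm:Dirichlet_and_forces} and define $u_\e^{(j)}:=\max(\tilde u_\e^{(j)},\varphi_\e)$ componentwise; then $u_\e^{(j)}\geq\varphi_\e$, the boundary datum is preserved since $g\geq\varphi_\e$, and $u_\e^{(j)}\to u_j$ in $L^1$. Since $\nabla u_\e^{(j)}=\nabla\tilde u_\e^{(j)}$ on $\{\tilde u_\e^{(j)}\geq\varphi_\e\}$ and $\nabla u_\e^{(j)}=\nabla\varphi_\e$ on $E_\e^{(j)}:=\{\tilde u_\e^{(j)}<\varphi_\e\}$, condition (A2) yields
\begin{equation*}
F_\e(\w,u_\e^{(j)},D)\leq F_\e(\w,\tilde u_\e^{(j)},D)+\int_{E_\e^{(j)}}\bigl(|\nabla\varphi_\e\,A(\w,\tfrac{x}{\e})|^p+\Lambda(\w,\tfrac{x}{\e})\bigr)\dx.
\end{equation*}
Chebyshev's inequality together with the gap $j^{-1}$ on $K_j$ gives $\limsup_{\e\to 0}|E_\e^{(j)}|\leq|D\setminus K_j|\to 0$ as $j\to\infty$. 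By Lemma~\ref{l.weakL1}, $|A(\w,\cdot/\e)|^p$ and $\Lambda(\w,\cdot/\e)$ converge weakly in $L^1(D)$ almost surely, hence are equi-integrable by Dunford--Pettis, and since $\nabla\varphi_\e$ is uniformly bounded in $L^\infty$, the error term vanishes after first letting $\e\to 0$ and then $j\to\infty$. The linear forcing term passes to the limit via Theorem~\ref{thm.embedding} together with the $W^{1,1}$-bound inherited from Theorem~\ref{thm:Dirichlet_and_forces}, and a standard diagonalization produces the desired recovery sequence.

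The main obstacle is the control of the error integral over $E_\e^{(j)}$: if $u=\varphi$ on a set of positive measure, then $|E_\e^{(j)}|$ need not vanish without the strictification $u\rightsquigarrow u_j$, and the degenerate oscillating weight $|A(\w,\cdot/\e)|^p$ cannot be controlled by a uniform bound but precisely by the equi-integrability supplied by Lemma~\ref{l.weakL1}.
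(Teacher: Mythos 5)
Your proposal is correct, but it takes a genuinely different route from the paper in the only delicate part, the limsup inequality. The paper first replaces $g$ by a modified datum $\tilde g$ dominating $\varphi_\e$ on all of $\overline D$, then builds obstacle-compatible sequences \emph{before} imposing the boundary condition: it strictifies the obstacle by the global constant shift $u+\eta$, truncates via Lemma \ref{l.truncation} to get $L^\infty$ bounds, takes the max with $\varphi_\e$, and only afterwards restores the boundary datum by the cut-off/averaging construction of Proposition \ref{p.ub_constrained}, which preserves the constraint precisely because $\tilde g\geq\varphi_\e$ on $\overline D$; a density step $u_n\in g+C_c^\infty$, $v_n=\max\{u_n,\varphi\}$ (Marcus--Mizel) reduces to bounded targets. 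You instead strictify only in the interior, via $u_j=u+j^{-1}\zeta_j\mathbf 1$ with $\zeta_j$ compactly supported (so the datum $g$ is untouched and $u_j\to u$ in $W^{1,p}$), take recovery sequences already attaining $g$ from Theorem \ref{thm:Dirichlet_and_forces}, and then impose the constraint by the componentwise max with $\varphi_\e$; the boundary condition survives because traces commute with $\max$ and $g\geq\varphi_\e$ on $\partial D$, which is exactly the stated hypothesis. The price is that the bad set $E_\e^{(j)}$ does not shrink to a null set for fixed $j$ but only to (a neighborhood of) $D\setminus K_j$; this is handled, as you say, by the uniform-in-$\e$ equi-integrability of $|A(\w,\cdot/\e)|^p+\Lambda(\w,\cdot/\e)$ from Lemma \ref{l.weakL1} (Dunford--Pettis), letting $\e\to0$ first and $j\to\infty$ second. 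What each approach buys: yours avoids the modification of $g$, the truncation lemma and the boundary-layer gluing altogether, at the cost of using the trace/max compatibility and the "small but not vanishing" bad set; the paper's route never needs to touch the recovery sequence near $\partial D$ after the gluing and is self-contained at the level of $W^{1,1}_0$ perturbations. Two details you should spell out, at the same level of care as the paper: (i) the displayed identity $\nabla u_\e^{(j)}=\nabla\varphi_\e$ on $E_\e^{(j)}$ is the scalar-case formula; for $m\geq 2$ the max and the estimate must be carried out componentwise (as the paper also does), using that $A$ is diagonal so the weighted Frobenius norm splits over rows; (ii) to pass to the limit in the forcing term and to justify trace$(\max)=\max($traces$)$ you should invoke the energy bound of the constructed sequence together with Lemma \ref{l.compactness} and Theorem \ref{thm.embedding}, and the continuity of $w\mapsto w^+$ on $W^{1,1}$ plus continuity of the trace. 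Neither point is a gap; both are routine and parallel to steps the paper itself compresses.
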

\begin{remark}
Similar to Remark \ref{r.existence}, the existence of minimizers for fixed $\e>0$ is guaranteed by the quasiconvexity of $f$ in the last variable. Indeed, it suffices to note that the constraint $u\geq\varphi_{\e}$ is closed under $L^1$-convergence. Moreover, by the Sobolev embedding we also have that $g\geq \varphi$ on $\partial D$, so that the limit functional is non-trivial.
\end{remark}

Our last result concerns the stochastic homogenization of the Euler-Lagrange equations associated to $F_{\e}(\w,\cdot,D)$. This is the only result where we consider the stronger Assumption \ref{a.2}. For notational convenience, given $g\in W^{1,\infty}_{\rm loc}(\R^d,\R^m)$, we introduce the affine energy space
\begin{equation*}
	\mathcal{A}_{g,\e}(\w):=\left\{u\in g+W^{1,1}_0(D,\R^m):\,\int_D|\nabla u(x)A(\w,\tfrac{x}{\e})|^p\dx<+\infty\right\}.
\end{equation*}
\begin{theorem}\label{thm.PDEs}
Under Assumption \ref{a.2} the function $f_{\rm hom}$ given by Theorem \ref{thm.Gamma_pure} is strictly convex and continuously differentiable. The derivative satisfies the estimate
\begin{equation*}
	|\nabla f_{\rm hom}(\xi)|\leq C(1+|\xi|^{p-1}).
\end{equation*}
Moreover, let $g$ and $f_{\e},f_0$ be as in Theorem \ref{thm:Dirichlet_and_forces}. Then for almost every $\w\in\Omega$ there exists a unique weak solution $u_{\e,\w}\in \mathcal{A}_{g,\e}(\w)$ of the PDE
\begin{equation}\label{eq:epsPDE}
	\begin{split}-{\rm div}(\partial_{\xi }f(\w,\tfrac{\cdot}{\e},\nabla u))&=f_{\e}\quad\text{ on }D,
	\\
	u&=g\quad\text{ on }\partial D.
	\end{split}
\end{equation}
As $\e\to 0$, the solution converges weakly in $W^{1,1}(D,\R^m)$ and strongly in $L^{d/(d-1)}(D,\R^m)$ to the unique weak solution $u\in g+W^{1,p}_0(D,\R^m)$ of the PDE 
\begin{align*}
	-{\rm div}(\nabla f_{\rm hom}(\nabla u))&=f_0\quad\text{ on }D,
	\\
	u&=g\quad\text{ on }\partial D.
\end{align*}
\end{theorem}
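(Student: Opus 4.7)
The plan is to exploit the single-cell representation of $f_{\rm hom}$ from Lemma~\ref{l.F_pot_formula} together with the $\Gamma$-convergence of Theorem~\ref{thm:Dirichlet_and_forces}. Writing $f_{\rm hom}(\xi)=\inf_v\mathbb{E}[f(\omega,0,\xi+v(\omega))]$, where $v$ ranges over an appropriate class of mean-zero, stationary potential random fields, strict convexity of $f_{\rm hom}$ follows by a standard convexity argument: given $\xi_1\neq\xi_2$, $\lambda\in(0,1)$, and near-optimal $v_1,v_2$ for the two gradients, the field $\lambda v_1+(1-\lambda)v_2$ is admissible for $\lambda\xi_1+(1-\lambda)\xi_2$, while $\xi_1+v_1$ and $\xi_2+v_2$ cannot coincide almost everywhere, since the expectations of the $v_i$ vanish and $\xi_1-\xi_2\neq 0$; the strict convexity of $f(\omega,0,\cdot)$ then produces a strict inequality. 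For differentiability of $f_{\rm hom}$, which is already convex, I would verify the upper semidifferentiability criterion of~\cite{BKK}: picking $v_\xi$ as an (almost) optimal potential at $\xi$ and using it as a competitor for $\xi+t\eta$ yields $t^{-1}(f_{\rm hom}(\xi+t\eta)-f_{\rm hom}(\xi))\leq\mathbb{E}[t^{-1}(f(\omega,0,\xi+t\eta+v_\xi)-f(\omega,0,\xi+v_\xi))]$, and the right-hand side converges to $\mathbb{E}[\partial_\xi f(\omega,0,\xi+v_\xi)\cdot\eta]$ by dominated convergence, the dominating function being built from the $(p-1)$-growth of $\partial_\xi f$ using $|A(\cdot,0)|^p\in L^1(\Omega)$.

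The derivative bound then follows from the two-sided $p$-growth of $f_{\rm hom}$ in Theorem~\ref{thm.Gamma_pure} and the elementary convex-analysis inequality $|\nabla f_{\rm hom}(\xi)|\leq\sup_{|\eta|\leq 1}(f_{\rm hom}(\xi+\eta)-f_{\rm hom}(\xi))\leq C(1+|\xi|^{p-1})$. At fixed $\e$, the functional $u\mapsto F_{\e,f_\e,g}(\omega,u,D)$ is strictly convex and its sublevel sets on $\mathcal{A}_{g,\e}(\omega)$ are, by the lower bound in (A2) combined with H\"older's inequality against the weight $|A(\omega,\cdot/\e)^{-1}|^{p/(p-1)}\in L^1_{\rm loc}$ and with the $L^d$-integrability of $f_\e$, bounded in $W^{1,1}(D,\R^m)$; the direct method together with convex weak lower semicontinuity then produces a unique minimizer. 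Convexity and differentiability of $f(\omega,x,\cdot)$ identify this minimizer with the unique weak solution of~\eqref{eq:epsPDE} tested against Lipschitz functions with compact support in $D$, a legitimate test class because the $(p-1)$-growth of $\partial_\xi f$ with respect to the weight $A$ together with H\"older gives the required integrability of $\partial_\xi f(\omega,\cdot/\e,\nabla u)\cdot\nabla\phi$. The analogous variational-to-PDE reduction on $g+W^{1,p}_0(D,\R^m)$ for the limit problem is classical given the standard $p$-growth and strict convexity of $f_{\rm hom}$.

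To pass to the limit I would invoke Theorem~\ref{thm:Dirichlet_and_forces}: the minimizers $u_{\e,\omega}$ are equicoercive (bounded in $W^{1,1}(D,\R^m)$ and hence precompact in $L^{d/(d-1)}(D,\R^m)$), every subsequential limit is a minimizer of $F_{{\rm hom},f_0,g}$, and uniqueness of the latter promotes subsequential convergence to convergence of the entire sequence. The identification of the limit with the weak solution of the homogenized PDE is again via the strict convexity and differentiability of $f_{\rm hom}$. The main obstacle I expect is the differentiability step: producing a dominating function for the difference quotient of $f(\omega,0,\xi+v_\xi+t\eta)$ that is compatible with the degenerate weight requires a delicate use of the sharp integrability $|A(\cdot,0)|^p\in L^1(\Omega)$, and since the optimal potential $v_\xi$ lies only in a weighted $L^p$-space on $\Omega$, reusing it as a competitor for nearby gradients $\xi+t\eta$ demands an estimate that absorbs the perturbation $t\eta$ into the weight while preserving the factor $t$ that ultimately yields the derivative.
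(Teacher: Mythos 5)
Your proposal follows the same architecture as the paper: strict convexity and differentiability of $f_{\rm hom}$ via the single-cell formula of Lemma~\ref{l.F_pot_formula}, existence/uniqueness at fixed $\e$ by the direct method with strict convexity, and passage to the limit via the $\Gamma$-convergence and equicoercivity of Theorem~\ref{thm:Dirichlet_and_forces}. Two of your intermediate steps, however, do not quite hold as stated.

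First, the strict-convexity argument breaks down if the potentials $v_1,v_2$ are only \emph{near}-optimal. With an error $\delta>0$ in optimality, the chain of inequalities yields $f_{\rm hom}(\lambda\xi_1+(1-\lambda)\xi_2)\leq \lambda f_{\rm hom}(\xi_1)+(1-\lambda)f_{\rm hom}(\xi_2)+\delta-\delta_0$, where the gain $\delta_0>0$ coming from the strict convexity of $f(\omega,0,\cdot)$ depends on $v_1,v_2$, and there is no uniform lower bound on $\delta_0$ as $\delta\to 0$. The argument only closes with \emph{exact} minimizers $h_{\xi_i}$: then $\delta=0$, and equality in Jensen's step forces $\xi_1+h_{\xi_1}=\xi_2+h_{\xi_2}$ a.s., whence $\xi_1=\xi_2$ after taking expectations. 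The existence of these exact minimizers is precisely the content of Lemma~\ref{l.F_pot_formula} and must be invoked rather than avoided. Second, restricting the test functions in the weak formulation of~\eqref{eq:epsPDE} to Lipschitz functions with compact support makes the uniqueness assertion inaccessible: for two solutions $u_1,u_2\in\mathcal{A}_{g,\e}(\w)$ one must test with $\varphi=u_1-u_2\in\mathcal{A}_{0,\e}(\w)$, which is not Lipschitz, and there is no obvious density of Lipschitz functions in the weighted energy space. The paper takes the whole class $\mathcal{A}_{0,\e}(\w)$ as test functions (cf.\ Remark~\ref{r.weaksol}); the required integrability of $\partial_\xi f(\w,\cdot/\e,\nabla u)\cdot\nabla\varphi$ for such $\varphi$ follows from estimate~\eqref{eq:f_quantitativedifferentiable} and H\"older. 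Finally, your bound $\sup_{|\eta|\leq 1}(f_{\rm hom}(\xi+\eta)-f_{\rm hom}(\xi))\leq C(1+|\xi|^{p-1})$ does not follow from the two-sided $p$-growth alone (the mismatch $C_0-c_0$ between upper and lower growth constants produces an $O(|\xi|^p)$ discrepancy); one must use the local Lipschitz estimate for (separately) convex functions with polynomial upper growth on balls of radius comparable to $|\xi|$, i.e.\ \cite[Proposition 4.64]{FoLe}.
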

\begin{remark}\label{r.weaksol}
a) The weak formulation of \eqref{eq:epsPDE} has to be understood in the following sense: for every $\varphi\in\mathcal{A}_{0,\e}$ it holds that
\begin{equation*}
	\int_D \partial_{\xi}f(\w,\tfrac{x}{\e},\nabla u(x)) \nabla\varphi(x)\dx=\int_D f_{\e}(x)\cdot\varphi(x)\dx.
\end{equation*}
In particular, the equality is valid for all $\varphi\in W^{1,\infty}_0(D,\R^m)$. The homogenized equation can be tested against all functions $\varphi\in W^{1,p}_0(D,\R^m)$.

\vspace*{2.5mm}

\noindent b) The differentiability of $f_{\rm hom}$ can be proven without convexity assumptions. In this case, the non-asymptotic formula for $f_{\rm hom}$ in Lemma \ref{l.F_pot_formula} is not available and differentiability has to be proven starting with the multi-cell formula. For this reason, we need a more quantitative $C^1$-assumption on the map $\xi\mapsto f(\w,x,\xi)$. More precisely, if we assume that for some $c_0>0$ and $0<\alpha\leq\min\{1,p-1\}$ the estimates
\begin{equation}\label{eq:quantitativeC1}
| \partial_{\xi}f(\w,x,\xi_1)-\partial_{\xi}f(\w,x,\xi_0)|\leq c_1|A(\w,x)|(\Lambda(\w,x)^{\frac{1}{p}}+|\xi_1A(\w,x)|+|\xi_0A(\w,x)|)^{p-1-\alpha}|(\xi_1-\xi_0)A(\w,x)|^{\alpha}
\end{equation}
and
\begin{equation}\label{eq:quantitativeBound}
|\partial_{\xi}f(\w,x,0)|\leq\Lambda(\w,x)
\end{equation}
hold true, then $f_{\rm hom}$ is also continuously differentiable with $|\nabla f_{\rm hom}(\xi)|\leq C(1+|\xi|^{p-1})$. Since this might be of interest in the quasiconvex case (where the bound \eqref{eq:quantitativeBound} is automatically satisfied), we provide a proof of this fact in the appendix (cf. Lemma \ref{l.app_diff}). Note that by the chain rule \eqref{eq:quantitativeC1} is satisfied by functions of the form $f(\w,x,\xi)=g(\xi A(\w,x))$ with $g$ a homogeneous and deterministic function satisfying the non-degenerate version of \eqref{eq:quantitativeC1}.  

The existence of solutions to \eqref{eq:epsPDE} can be shown replacing the strict convexity in Assumption \ref{a.2} by the quasiconvexity of the map $\xi\mapsto f(\w,x,\xi)$. Indeed, a close inspection of the proof of Lemma \ref{l.epsexistence} reveals that, regarding existence, the convexity is only used to show that $F_{\e}(\w,\cdot)$ is lower semicontinuous with respect to weak convergence in $W^{1,1}(D,\R^m)$ and to have \eqref{eq:f_quantitativedifferentiable} available. As explained in Remark \ref{r.existence}, the quasiconvexity already implies the weak lower semicontinuity, while we prove \eqref{eq:f_quantitativedifferentiable} for separately convex functions. We could formulate an analogue of Theorem \ref{thm.PDEs} without strict convexity, but assuming quasiconvexity and \eqref{eq:quantitativeC1}-\eqref{eq:quantitativeBound}. However, solutions might be non-unique and therefore convergence only holds up to subsequences.
\end{remark}
\section{Proofs}\label{s.proofs}
In this section we occasionally apply the (sub)additive ergodic theorem. In such a situation we have to exclude a null set of $\Omega$. We stress that we apply the ergodic theorem only countably many times (some care has to be taken when proving Lemma \ref{l.existence_f_hom} below). For this reason, we do not always mention this step in the proofs, but assume tacitly that the element $\w\in\Omega$ is chosen from a suitable set of full probability.
\subsection{The ergodic theorem as weak convergence in $L^1$}
In this section we show a strengthened version of the additive ergodic theorem \ref{thm.additiv_ergodic} in the sense of weak convergence in $L^1(D)$. 
\begin{lemma}\label{l.weakL1}
Let $g\in L^1(\Omega)$ and $\{\tau_z\}_{z\in\R^d}$ be a measure-preserving, ergodic group action. Then for a.e. $\w\in\Omega$ the sequence of functions $x\mapsto g(\tau_{x/\e}\w)$ converges weakly in $L^1(D)$ as $\e\to 0$ to the constant function $\mathbb{E}[g]$. In particular, for a.e. $\w\in\Omega$ the sequences of functions $|A(\w,\cdot/\e)|^p$, $|A(\w,\cdot/\e)^{-1}|^{p/(p-1)}$ and $\Lambda(\w,\tfrac{x}{\e})$ converge weakly in $L^1(D)$ as $\e\to 0$ to the constant functions $\mathbb{E}[|A(\cdot,0)|^p]$, $\mathbb{E}[|A(\cdot,0)^{-1}|^{p/(p-1)}]$ and $\mathbb{E}[\Lambda(\cdot,0)]$, respectively. 
\end{lemma}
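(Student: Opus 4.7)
My plan is to reduce the desired weak $L^1(D)$-convergence to the averaged convergence guaranteed by Theorem~\ref{thm.additiv_ergodic} by means of a truncation-and-density argument. For $M\in\N$ I set $g_M\defas(g\wedge M)\vee(-M)\in L^\infty(\Omega)$. The strategy rests on two complementary properties: the bounded part $g_M(\tau_{\cdot/\e}\w)$ is uniformly bounded in $L^\infty(D)$, so its weak-$\ast$ limit is determined by averages over a countable dense family of test sets; while the tail $g-g_M$ has small $L^1(\Omega)$-norm, and the ergodic theorem then makes its realization $x\mapsto(g-g_M)(\tau_{x/\e}\w)$ inherit a correspondingly small $L^1(D)$-norm in the limit $\e\to 0$.

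Concretely, I will first apply Theorem~\ref{thm.additiv_ergodic} to the countable family of integrands $\{g,|g|\}\cup\{g_M,\,|g-g_M|:M\in\N\}$ and to the countable family of open rational cubes $Q\Subset D$, discarding the union of the corresponding null sets to obtain a full-probability set $\Omega'\subset\Omega$ on which all these convergences hold simultaneously. Second, fixing $\w\in\Omega'$ and $M\in\N$, the change of variable $y=x/\e$ turns the statement of Theorem~\ref{thm.additiv_ergodic} into
\begin{equation*}
\dashint_Q g_M(\tau_{x/\e}\w)\dx\xrightarrow[\e\to 0]{}\mathbb{E}[g_M]
\end{equation*}
for every rational cube $Q\Subset D$; combined with the uniform $L^\infty$-bound and the density in $L^1(D)$ of simple functions supported on finite unions of rational cubes, this upgrades to weak-$\ast$ convergence $g_M(\tau_{\cdot/\e}\w)\overset{*}{\rightharpoonup}\mathbb{E}[g_M]$ in $L^\infty(D)$. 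Third, applying Theorem~\ref{thm.additiv_ergodic} to $|g-g_M|$ on the Lipschitz domain $D$ itself yields $\lim_{\e\to 0}\|(g-g_M)(\tau_{\cdot/\e}\w)\|_{L^1(D)}=|D|\,\mathbb{E}[|g-g_M|]$. Combining these three ingredients, for any test function $\psi\in L^\infty(D)$ I obtain
\begin{equation*}
\limsup_{\e\to 0}\Bigl|\int_D\psi(x)\,g(\tau_{x/\e}\w)\dx-\mathbb{E}[g]\int_D\psi\dx\Bigr|\leq|\mathbb{E}[g-g_M]|\int_D|\psi|\dx+\|\psi\|_\infty\,|D|\,\mathbb{E}[|g-g_M|],
\end{equation*}
and letting $M\to+\infty$ makes the right-hand side vanish, which is the weak $L^1(D)$-convergence. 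The ``In particular'' part then follows by applying the first assertion to $|A(\cdot,0)|^p$, $|A(\cdot,0)^{-1}|^{p/(p-1)}$ and $\Lambda(\cdot,0)$, whose $L^1(\Omega)$-integrability is precisely Assumption~\ref{a.1}(A2), together with stationarity (A3).

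The main obstacle I anticipate is the one the author explicitly flags in the introduction: without equi-integrability of $\{g(\tau_{\cdot/\e}\w)\}_\e$, there is no direct passage from average convergence on sets with Lipschitz boundary to weak $L^1$-convergence tested against arbitrary $L^\infty$ functions (equivalently, averages on arbitrary Borel sets), and an unbounded $g\in L^1(\Omega)$ does not a priori generate an equi-integrable family. The truncation above is designed precisely to circumvent a more delicate biting-convergence analysis: $g_M$ moves the problem to the comfortable $L^\infty$-weak-$\ast$ setting, while the $L^1$-small tail $g-g_M$ is absorbed into an error term controlled uniformly in $\e$ via the ergodic theorem on $D$. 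A secondary but necessary bookkeeping point is to carry out the entire argument on a single full-probability set $\Omega'$, which is achieved by intersecting the countably many null sets coming from varying $M\in\N$ and the rational cubes.
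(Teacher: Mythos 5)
Your proof is correct and takes a genuinely different route than the paper's. The paper first uses the biting lemma of Ball--Murat to extract a biting limit $\sigma_\w\in L^1(D)$ along a subsequence, then identifies $\sigma_\w=\mathbb{E}[g]$ by applying the ergodic theorem to the truncations $\min\{g,k\}$ (weak-$\ast$ in $L^\infty$) and monotone convergence, and finally upgrades biting convergence to weak $L^1$ convergence by exploiting the nonnegativity of $g$ together with the convergence of total masses on $D$. Your argument sidesteps the biting lemma entirely: you decompose $g=g_M+(g-g_M)$, handle the bounded part $g_M$ by the same weak-$\ast$ mechanism (cube averages $+$ $L^\infty$-boundedness $+$ density), and control the unbounded tail $g-g_M$ uniformly in $\e$ via a second application of the ergodic theorem to $|g-g_M|$ on $D$, which gives an error of order $\mathbb{E}[|g-g_M|]$ that vanishes as $M\to\infty$ by dominated convergence. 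This is more elementary and arguably more transparent than the biting-lemma route; the paper's approach, while heavier machinery, is one the authors advertise as ``an abstract approach identifying the biting limit'' and fits the style of other biting/Vitali arguments elsewhere in the text. Both proofs share the crucial truncation ingredient; the difference is whether the tail is controlled by biting compactness or by a direct $L^1$-smallness estimate. One minor redundancy in your write-up: since the ergodic theorem in the form quoted already furnishes, for each integrand, a single full-measure set on which the convergence holds for \emph{all} bounded open Lipschitz sets $O$, you do not need to restrict to a countable family of rational cubes -- only the countability in $M$ matters.
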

\begin{proof}
Splitting $g$ into its positive and negative part, we can assume without loss of generality that $g\geq 0$. Due to the ergodic theorem \ref{thm.additiv_ergodic}, for a.e. $\w\in\Omega$ it holds that
\begin{equation}\label{eq:onD}
	\lim_{\e\to 0}\dashint_Dg(\tau_{x/\e}\w)\dx=\lim_{\e\to 0}\dashint_{D/\e}g(\tau_y\w)\dy= \mathbb{E}[g].
\end{equation}
Hence for such $\w$ the sequence $g(\tau_{\cdot/\e}\w)$ is bounded in $L^1(D)$ and due to the biting lemma (see \cite{BaMu}) we find a function $\sigma_{\w}\in L^1(D)$, a subsequence $\e_n\to 0$ and a decreasing sequence of measurable sets $(E_j)_{j\in\N}$ with $\lim_{j\to +\infty}|E_j|=0$ such that $g(\tau_{\cdot/\e_n}\w)\rightharpoonup \sigma_{\w}$ in $L^1(D\setminus E_j)$ as $n\to +\infty$ for every $j\in\N$. In order to identify the function $\sigma_{\w}$, we follow \cite[Section 3]{BaMu} and consider the truncation of $g(\tau_{\cdot/\e}\w)$ defined for $k\in\N$ by $g_{\e,k}(\w,x):=\min\{g(\tau_{x/\e}\w),k\}$. For fixed $k\in\N$, the sequence $g_{\e,k}(\w,\cdot)$ is bounded in $L^{\infty}(D)$. Applying the ergodic theorem \ref{thm.additiv_ergodic} to $g_{\e,k}$, up to excluding another null set in $\Omega$, we deduce that
\begin{equation*}
	g_{\e,k}(\w,\cdot)\overset{*}{\rightharpoonup} \mathbb{E}[\min\{g,k\}]\quad\text{ in }L^{\infty}(D)\text{ as }\e\to 0.
\end{equation*}
By the monotone convergence theorem it holds that
\begin{equation*}
	\lim_{k\to +\infty}\mathbb{E}[\min\{g,k\}]=\mathbb{E}[g],
\end{equation*}
which according to \cite[Proposition on p. 659]{BaMu} yields that $\sigma_{\w}=\mathbb{E}[g]$. Since we assume $g$ to be nonnegative,  \cite[Proposition 2.67]{FoLe} and \eqref{eq:onD} allow us to upgrade the biting convergence to weak convergence in $L^1(D)$. Since the limit does not depend on the subsequence $\e_n$, this shows the claim. The second part of the lemma follows from  Assumption \ref{a.1}.
\end{proof}
\subsection{Compactness for energy-bounded sequences}
We derive a compactness property for the gradients of sequences $u_{\e}$ with equibounded energy. Note that, in the domain of the $\Gamma$-limit, gradients have better integrability than in the compactness statement. Finally, compactness for the sequence $u_{\e}$ itself can only hold with further assumptions that allow us to apply a Poincar\'e inequality. 
\begin{lemma}\label{l.compactness}
For $\e>0$ let $u_{\e}\in W^{1,1}(D,\R^m)$ be a function such that
\begin{equation*}
\sup_{\e\in (0,1)}F_{\e}(\w,u_{\e},D)<+\infty.
\end{equation*}
Then, as $\e\to 0$, the gradients $\nabla u_{\e}$ are relatively weakly compact in $L^{1}(D,\R^{m\times d})$. If moreover $u_{\e}$ is bounded in $L^1(D)$, then, up to subsequences, there exists $u\in W^{1,p}(D,\R^m)$ such that $u_{\e}\rightharpoonup u$ weakly in $W^{1,1}(D,\R^m)$.
\end{lemma}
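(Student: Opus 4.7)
\textbf{Proof plan for Lemma \ref{l.compactness}.}

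The plan is to combine the degenerate lower bound in (A2) with the $L^1$-weak convergence provided by Lemma \ref{l.weakL1} (and the Dunford-Pettis theorem) to obtain equi-integrability of $\nabla u_\e$, and then to upgrade the regularity of the limit from $W^{1,1}$ to $W^{1,p}$ by a duality argument that crucially uses the diagonal structure of $A$. Throughout we fix a realization $\w$ belonging to the full-measure set on which Lemma \ref{l.weakL1} applies.

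First, (A2) gives $c\int_D |\nabla u_\e(x) A(\w,x/\e)|^p\dx \le F_\e(\w,u_\e,D) \le M$ for some $M<+\infty$. Since the operator norm is submultiplicative,
\begin{equation*}
|\nabla u_\e(x)| = |\nabla u_\e(x) A(\w,x/\e) \cdot A(\w,x/\e)^{-1}| \le |\nabla u_\e(x) A(\w,x/\e)|\,|A(\w,x/\e)^{-1}|,
\end{equation*}
so H\"older's inequality with exponents $p$ and $p'=p/(p-1)$ yields, for every measurable $E\subset D$,
\begin{equation*}
\int_E |\nabla u_\e|\dx \le (M/c)^{1/p}\Bigl(\int_E |A(\w,x/\e)^{-1}|^{p/(p-1)}\dx\Bigr)^{(p-1)/p}.
\end{equation*}
By Lemma \ref{l.weakL1} the family $|A(\w,\cdot/\e)^{-1}|^{p/(p-1)}$ converges weakly in $L^1(D)$ and is therefore equi-integrable by the Dunford-Pettis theorem. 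Consequently the right-hand side tends to $0$ as $|E|\to 0$ uniformly in $\e$, so $\{\nabla u_\e\}_\e$ is equi-integrable and hence relatively weakly compact in $L^1(D,\R^{m\times d})$, again by Dunford-Pettis. If in addition $\{u_\e\}_\e$ is bounded in $L^1(D)$, then it is bounded in $W^{1,1}(D,\R^m)$ and, up to a subsequence, converges weakly in $W^{1,1}$ to some $u\in W^{1,1}(D,\R^m)$.

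The main obstacle is to show $\nabla u\in L^p$. Here I exploit that $A(\w,x)$ is diagonal: a direct componentwise computation gives the identity $\phi:\nabla u_\e = (\phi A(\w,\cdot/\e)^{-1}):(\nabla u_\e A(\w,\cdot/\e))$ for any $\phi\in\R^{m\times d}$. Hence, for any $\phi\in C_c(D,\R^{m\times d})$, another application of H\"older's inequality yields
\begin{equation*}
\Bigl|\int_D \phi:\nabla u_\e \dx\Bigr| \le (M/c)^{1/p}\Bigl(\int_D |\phi(x)|^{p'} |A(\w,x/\e)^{-1}|^{p'}\dx\Bigr)^{1/p'}.
\end{equation*}
Since $|\phi|^{p'}\in L^\infty(D)$, Lemma \ref{l.weakL1} allows us to pass to the limit in the right-hand side, obtaining $C\,\mathbb{E}[|A(\cdot,0)^{-1}|^{p'}]^{1/p'}\|\phi\|_{L^{p'}(D)}$, while the left-hand side converges to $|\int_D \phi:\nabla u\dx|$ by the already established weak $L^1$-convergence. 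By the density of $C_c(D,\R^{m\times d})$ in $L^{p'}(D,\R^{m\times d})$ and the duality $(L^{p'})^*=L^p$, we conclude $\nabla u\in L^p(D,\R^{m\times d})$, and therefore $u\in W^{1,p}(D,\R^m)$.

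The genuinely delicate step is the final one: the $L^1$-weak compactness by itself only puts the limit in $W^{1,1}$, and to promote it to $W^{1,p}$ we need both the degenerate bound $\|\nabla u_\e A(\w,\cdot/\e)\|_{L^p}\le C$ and the identity $\phi:\nabla u_\e = (\phi A^{-1}):(\nabla u_\e A)$, which fails for general non-diagonal $A$ and is the precise place where the diagonal-matrix hypothesis enters compactness.
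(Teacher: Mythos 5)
Your proof is essentially correct and follows the same route as the paper: H\"older plus the lower bound in (A2), combined with the $L^1$-weak convergence from Lemma~\ref{l.weakL1} to deduce equi-integrability of $\nabla u_\e$, and then a duality argument against $L^\infty$ test fields to upgrade the limit to $W^{1,p}$. The paper phrases the equi-integrability step slightly differently — it integrates over the superlevel sets $\{|\nabla u_\e|\ge k\}$ and uses Markov's inequality to control their measure — whereas you integrate over an arbitrary measurable $E$ of small measure and invoke Dunford--Pettis; both are fine and yours is arguably cleaner. The duality step is also the same in substance.

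One genuine inaccuracy, however, is your closing claim that the identity $\phi:\nabla u_\e = (\phi A^{-1}):(\nabla u_\e A)$ is ``the precise place where the diagonal-matrix hypothesis enters compactness.'' This is misleading in two ways. First, the identity you use is not actually needed: the paper's proof instead passes through the elementary submultiplicativity estimate $|\nabla u_\e|\le|\nabla u_\e A|\,|A^{-1}|$ together with $|\nabla u_\e\cdot\varphi|\le|\nabla u_\e|\,|\varphi|$, and the whole of Lemma~\ref{l.compactness} therefore goes through for any pointwise invertible $A$ with the stated integrability, diagonal or not. (The exact identity you write holds for all symmetric $A$, not just diagonal ones, but that too is beside the point.) Second, the place where diagonality of $A$ is genuinely indispensable in this paper is the vectorial truncation of Lemma~\ref{l.truncation}, where one needs $|\nabla(\psi_r\circ u_\e)A|\le|\nabla u_\e\,A|$ pointwise, and this estimate uses that each column of the gradient is weighted by a single scalar. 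It would be worth correcting that remark, since attributing the diagonality hypothesis to the compactness step obscures its actual role.
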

\begin{proof}
We first show that $|\nabla u_{\e}|$ is bounded in $L^1(D)$. H\"older's inequality and the lower bound in Assumption~\ref{a.1} yield that
\begin{align}\label{eq:energy_estimate}
\int_D |\nabla u_{\e}(x)|\dx&\leq \int_{D} |\nabla u_{\e}(x)A(\w,\tfrac{x}{\e})|\,|A(\w,\tfrac{x}{\e})^{-1}|\dx\nonumber
\\
&\leq\bigg(\underbrace{\int_D|\nabla u_{\e}(x)A(\w,\tfrac{x}{\e})|^p\dx}_{\leq c^{-1}F_{\e}(\w,u_{\e},D)\leq C}\bigg)^{\frac{1}{p}}\left(\int_D|A(\w,\tfrac{x}{\e})^{-1}|^{\frac{p}{p-1}}\dx\right)^{\frac{p-1}{p}}
\end{align}
and due to Lemma \ref{l.weakL1} the right-hand side is bounded when $\e\to 0$. This proves the boundedness of $|\nabla u_{\e}|$ in $L^1(D)$. Since $D$ has finite measure, it remains to show that $\nabla u_{\e_n}$ is equi-integrable along any sequence $\e_n\to 0$. We are going to show that
\begin{equation}\label{eq:equiintegrable}
	\limsup_{k\to +\infty}\limsup_{\e\to 0}\int_{D\cap\{|\nabla u_{\e}|\geq k\}}|\nabla u_{\e}(x)|\dx=0,
\end{equation}
which implies the equi-integrability along sequences $\e_n\to 0$. Again H\"older's inequality implies that
\begin{equation*}
\int_{D\cap\{|\nabla u_{\e}|\geq k\}}|\nabla u_{\e}(x)|\dx\leq C^{\frac{1}{p}} \bigg(\;\int_{D\cap\{|\nabla u_{\e}|\geq k\}}|A(\w,\tfrac{x}{\e})^{-1}|^{\frac{p}{p-1}}\dx\bigg)^{\frac{p-1}{p}}.
\end{equation*}
Since $|\nabla u_{\e}|$ is bounded in $L^1(D)$, it follows from Markov's inequality that 
\begin{equation*}
	|D\cap\{|\nabla u_{\e}|\geq k\}|\leq k^{-1}\|\nabla u_{\e}\|_{L^1(D)}\leq Ck^{-1}.
\end{equation*}
Using the equi-integrability of $x\mapsto |A(\w,\tfrac{x}{\e})^{-1}|^{\frac{p}{p-1}}$ as $\e\to 0$ (see Lemma \ref{l.weakL1}), we obtain \eqref{eq:equiintegrable} due to
\begin{equation*}
	\lim_{k\to +\infty}\limsup_{\e\to 0}\int_{D\cap\{|\nabla u_{\e}|\geq k\}}|A(\w,\tfrac{x}{\e})^{-1}|^{\frac{p}{p-1}}\dx=0.
\end{equation*}

Finally, we treat the case when $u_{\e}$ is also bounded in $L^1(D,\R^m)$. Then by standard embedding theorems and the first part, we obtain that there exists $u\in W^{1,1}(D,\R^m)$ and a subsequence (not relabeled) such that $u_{\e}\rightharpoonup u$ in $W^{1,1}(D,\R^m)$. It remains to show that $u\in W^{1,p}(D,\R^m)$. Invoking Poincar\'e's inequality, it suffices to show that $\nabla u\in L^p(D,\R^{m\times d})$. As in \cite{NSS}, this will follow from duality. For any $\varphi\in L^{\infty}(D,\R^{m\times d})$ we have by weak convergence and H\"older's inequality that
\begin{align*}
\int_D|\nabla u(x)\cdot\varphi(x)|\dx&\leq\liminf_{\e\to 0}\int_D|\nabla u_{\e}(x)\cdot\varphi(x)|\dx
\\
&\leq \limsup_{\e\to 0}\left(\int_D|\nabla u_{\e}(x)A(\w,\tfrac{x}{\e})|^p\dx\right)^{\frac{1}{p}}\left(\int_D|A(\w,\tfrac{x}{\e})^{-1}|^{\frac{p}{p-1}}|\varphi(x)|^{\frac{p}{p-1}}\dx\right)^{\frac{p-1}{p}}
\\
&\leq C^{\frac{1}{p}}\limsup_{\e\to 0}\left(\int_D|A(\w,\tfrac{x}{\e})^{-1}|^{\frac{p}{p-1}}|\varphi(x)|^{\frac{p}{p-1}}\dx\right)^{\frac{p-1}{p}}.
\end{align*}
Using Lemma \ref{l.weakL1}, we know that the integrand in the last line converges weakly in $L^1(D)$ to the function $x\mapsto \mathbb{E}[|A(\cdot,0)^{-1}|^{\frac{p}{p-1}}]|\varphi(x)|^{\frac{p}{p-1}}$, from which we infer that
\begin{equation*}
\int_D|\nabla u(x)\cdot\varphi(x)|\dx\leq C\, \mathbb{E}[|A(\cdot,0)^{-1}|^{\frac{p}{p-1}}]^{\frac{p-1}{p}}\|\varphi\|_{L^{\frac{p}{p-1}}(D)}.
\end{equation*}
By density this estimate extends to the dual space of $L^p(D,\R^{m\times d})$, so that indeed $\nabla u\in L^p(D,\R^{m\times d})$.
\end{proof}

\subsection{Existence of the homogenized integrand}
Here we prove the existence of the limit defining $f_{\rm hom}(\xi)$. To this end, we introduce a suitable stochastic process for which we can apply the subadditive ergodic theorem. Some care has to be taken when dealing with different macroscopic gradients $\xi$ since the exceptional sets of $\w$, where convergence fails, should be independent of $\xi$. To this end, we establish a continuity property that allows to extend the convergence from rational matrices $\xi\in\mathbb{Q}^{m\times d}$ to all matrices by deterministic arguments. Moreover, the continuity property implies the continuity of $\xi\mapsto f_{\rm hom}(\xi)$, which will be crucial for proving the upper bound for the $\Gamma$-convergence by density arguments. 
\begin{lemma}\label{l.existence_f_hom}
Given $f$ satisfying Assumption \ref{a.1}, a bounded open set $A\subset\R^d$ and $\xi\in\R^{m\times d}$, we define
\begin{equation*}
	\mu_{\xi}(\w,O)=\inf\left\{F_1(\w,u,O):\,u-\xi x\in W^{1,1}_{0}(O,\R^m)\right\}.
\end{equation*}
Then a.s. for every cube $Q=x+(-\eta,\eta)^d\subset\R^d$ there exists the deterministic limit
\begin{equation*}
f_{\rm hom}(\xi)=\lim_{t\to +\infty}\frac{1}{|tQ|}\mu_{\xi}(\w,tQ),
\end{equation*}
which is independent of the cube $Q$. Moreover, we have the estimate
\begin{equation*}
c_0|\xi|^p\leq f_{\rm hom}(\xi)\leq C_0|\xi|^p+C_1
\end{equation*}
with $c_0=c\,\mathbb{E}[|A(\cdot,0)^{-1}|^{p/(p-1)}]^{1-p}$ ($c$ given by Assumption \ref{a.1}), $C_0=\sup_{|\eta|=1}\mathbb{E}[|\eta A(\cdot,0)|^p]$ and $C_1=\mathbb{E}[\Lambda(\cdot,0)]$, and the function $\xi\mapsto f_{\rm hom}(\xi)$ is continuous.
\end{lemma}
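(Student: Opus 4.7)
The plan is to apply the subadditive ergodic theorem of Akcoglu--Krengel to the stochastic set function $O \mapsto \mu_\xi(\w, O)$ indexed by bounded open sets. The three required properties are: measurability of $\w \mapsto \mu_\xi(\w, O)$ (which comes from Assumption (A1) via the measurability result in the appendix, Lemma \ref{l.measurable}); subadditivity, obtained by gluing $\delta$-optimal competitors $u_i \in \xi x + W^{1,1}_0(O_i, \R^m)$ along any disjoint decomposition $O = \bigcup_i O_i$ (up to a null set), since the Dirichlet conditions agree on internal boundaries and hence the pasted function is admissible on $O$; and stationarity $\mu_\xi(\tau_z\w, O) = \mu_\xi(\w, O+z)$, which is a direct change of variable combined with (A3).

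\textbf{Growth bounds.} The upper bound follows by testing with the affine competitor $u(x) = \xi x$, yielding $\mu_\xi(\w, tQ) \leq \int_{tQ} |\xi A(\w, x)|^p dx + \int_{tQ} \Lambda(\w, x) dx$, and applying Theorem \ref{thm.additiv_ergodic} to $|\xi A(\cdot, 0)|^p$ and $\Lambda(\cdot, 0)$. For the lower bound, any admissible $u$ satisfies $\int_O \nabla u \, dx = \xi |O|$ by the divergence theorem (applied componentwise), so H\"older's inequality combined with the pointwise bound $|\nabla u| \leq |\nabla u A|\,|A^{-1}|$ gives
\begin{equation*}
|\xi|\,|O| \leq \bigg(\int_O |\nabla u A|^p\,dx\bigg)^{1/p} \bigg(\int_O |A^{-1}|^{p/(p-1)}\,dx\bigg)^{(p-1)/p}.
\end{equation*}
Inverting this, combining with $c|\nabla u A|^p \leq f(\w, x, \nabla u)$, and applying the additive ergodic theorem to $|A^{-1}(\cdot,0)|^{p/(p-1)}$ produces the claimed lower bound $c_0|\xi|^p$.

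\textbf{Passage to the limit.} For each fixed $\xi \in \Q^{m\times d}$, the subadditive ergodic theorem yields a deterministic (by ergodicity of $\{\tau_z\}_z$) limit $f_{\rm hom}(\xi) = \lim_{t\to\infty} |tQ|^{-1}\mu_\xi(\w, tQ)$ independent of the cube $Q$. Taking a common event of full probability over the countable family indexed by rational $\xi$ and by cubes with rational center and side length, the convergence holds simultaneously on this event. The final step is to extend the convergence and the definition of $f_{\rm hom}(\xi)$ to every $\xi \in \R^{m \times d}$ by a continuity argument: for $\xi,\xi'$ close, the function $w = v_\xi + (\xi' - \xi)x$ is an admissible competitor for $\mu_{\xi'}(\w, O)$, so the upper bound in (A2), together with the elementary inequality $(a+b)^p \leq (1+\delta)a^p + C_\delta b^p$ and the lower bound $c|\nabla v A|^p \leq f(\w, x, \nabla v)$, yields, after passing to the limit via the weak $L^1$-convergence in Lemma \ref{l.weakL1},
\begin{equation*}
f_{\rm hom}(\xi') \leq (1+\delta) c^{-1} f_{\rm hom}(\xi) + C_\delta |\xi - \xi'|^p \E[|A(\cdot,0)|^p] + \E[\Lambda(\cdot, 0)].
\end{equation*}
Symmetrization and the two-sided $p$-growth bound then give a modulus of continuity for $f_{\rm hom}$, from which the multi-cell formula extends to every $\xi \in \R^{m\times d}$ on the same event of full probability.

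\textbf{Main obstacle.} The hardest step is the continuity estimate for $\xi \mapsto f_{\rm hom}(\xi)$. Since no convexity of $f$ is assumed, the gap between the upper and lower bounds in (A2) enters the perturbation estimate as the constant $c^{-1}\geq 1$ above, which does not vanish as $\xi' \to \xi$ and hence does not yield continuity on its own. To close this gap I would complement the perturbation estimate by exploiting the quasiconvexity of $f_{\rm hom}$ (inherited by the multi-cell formula via a diagonal gluing of near-optimal competitors) together with Marcellini's classical local Lipschitz estimate for quasiconvex functions with $p$-growth, which gives the sharper bound $|f_{\rm hom}(\xi) - f_{\rm hom}(\xi')| \leq C(1 + |\xi|^{p-1} + |\xi'|^{p-1})|\xi - \xi'|$ and therefore continuity.
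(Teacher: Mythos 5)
Your overall architecture matches the paper's: apply the subadditive ergodic theorem to $\w\mapsto\mu_\xi(\w,\cdot)$, verify measurability (via the appendix), subadditivity (gluing), and stationarity (change of variables), derive the two-sided $p$-growth from the affine competitor and H\"older with the diagonal weight, obtain a.s.\ convergence on a countable family of rational data, and then extend to all $\xi$ by a continuity argument. Up to and including the growth bounds, your reasoning is essentially the paper's. You also correctly flag the crux: the perturbation you write down, which replaces a near-optimal competitor $v_\xi$ by $v_\xi+(\xi'-\xi)x$ \emph{globally}, only yields
\begin{equation*}
f_{\rm hom}(\xi')\leq (1+\delta)c^{-1}f_{\rm hom}(\xi)+C_\delta|\xi'-\xi|^p\,\E[|A(\cdot,0)|^p]+\E[\Lambda(\cdot,0)],
\end{equation*}
which is local boundedness, not continuity, because of the factor $c^{-1}\geq 1$ and the additive $\E[\Lambda]$-term that persist as $\xi'\to\xi$.

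Where you deviate from the paper — and where the gap lies — is in the fix. The paper does \emph{not} argue through quasiconvexity at this stage. It compares $\mu_{\xi_0}(\w,tQ(1\pm\delta))$ with $\mu_\xi(\w,tQ)$ by extending a near-optimal competitor $v$ for $\mu_\xi(\w,tQ)$ to $\R^d$ by $v=\xi x$ outside $tQ$ and then interpolating with a cut-off $\varphi_{\delta,t}$ \emph{only on the thin shell} $tQ(1+\delta/2)\setminus tQ$, where $\|\nabla\varphi\|_\infty\lesssim(\delta t)^{-1}$. The competitor is untouched on $tQ$, so no $c^{-1}$ appears, and the resulting error is
\begin{equation*}
C\bigl(\delta^{-p}|\xi-\xi_0|^p+|\xi|^p+|\xi_0|^p+1\bigr)\,\E[\kappa(\cdot,0)]\,\Bigl(1-\tfrac{1}{(1+\delta)^d}\Bigr),
\end{equation*}
which vanishes as one first lets $\xi_0\to\xi$ and then $\delta\to0$. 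This simultaneously (i) transfers the a.s.\ convergence from rational $\xi_0$ to all $\xi$ and (ii) yields the modulus of continuity for $f_{\rm hom}$.

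Your proposed remedy — prove quasiconvexity of $f_{\rm hom}$ directly from the multi-cell formula by gluing near-optimal competitors and then invoke the Marcellini/Fonseca--Leoni local Lipschitz estimate — is not obviously wrong, but it is underdeveloped and substantially harder than you suggest. First, at this stage $f_{\rm hom}$ is only defined a.s.\ on a countable dense set of $\xi$, so Marcellini's estimate (which presupposes a separately convex real-valued function on all of $\R^{m\times d}$) cannot be applied before the very extension you are trying to justify; you would need to prove rank-one (or separate) convexity on the rational lattice, argue an extension to $\R^{m\times d}$, and then separately show the multi-cell limit converges to this extension. Second, and more seriously, the ``diagonal gluing'' required to show rank-one convexity of the multi-cell formula in this degenerate setting involves matching affine boundary data across cells through boundary layers — this is precisely the degenerate fundamental estimate for which the paper develops the vectorial truncation Lemma~\ref{l.truncation} and the weighted $L^1$-weak convergence Lemma~\ref{l.weakL1}. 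In the paper's development, quasiconvexity of $f_{\rm hom}$ is deduced only \emph{after} the full $\Gamma$-convergence (in Step~2 of Proposition~\ref{p.lb}), and the $\Gamma$-limsup proof of Proposition~\ref{p.ub} itself uses continuity of $f_{\rm hom}$; so without spelling out a genuinely independent gluing argument, your route risks circularity. Finally, two smaller omissions relative to the paper's proof: you do not describe the sandwich argument that upgrades the a.s.\ limit along integer dilations of integer-cornered cubes (the form the Akcoglu--Krengel theorem directly gives) to all real $t\to\infty$ and all cubes $Q=x+(-\eta,\eta)^d$, and you do not carry out the invariance argument $f_0(\tau_z\w,\xi)=f_0(\w,\xi)$ needed before ergodicity can be invoked to conclude that the limit is deterministic.
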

\begin{proof}
In order to apply the subadditive ergodic theorem, we first need to establish the integrability of the function $\w\mapsto \mu_{\xi}(\w,O)$. Measurability is proven in Lemma \ref{l.measurable} in the appendix. In order to show the integrability, we use the affine function $u(x)=\xi x$ as a candidate in the infimum problem. Since $F_1$ is nonnegative, the upper bound in Assumption \ref{a.1} implies that
\begin{equation}\label{eq:pointwisebound}
0\leq \mu_{\xi}(\w,O)\leq \int_O f(\w,x,\xi)\,\dx\leq \int_{O}|\xi A(\w,x)|^p+\Lambda(\w,x)\,\mathrm{d}x.
\end{equation}
From Tonelli's theorem we infer that
\begin{align}\label{eq:mu_bound}
\mathbb{E}\left[\mu_{\xi}(\cdot,O)\right]&\leq \int_O\mathbb{E}[|\xi A(\cdot,x)|^p]+\mathbb{E}[\Lambda(\cdot,x)]\,\mathrm{d}x=(\mathbb{E}[|\xi A(\cdot,0)|^p]|+\mathbb{E}[\Lambda(\cdot,0)])|O|\nonumber
\\
&\leq \left(\sup_{|\eta|= 1}\mathbb{E}[|\eta A(\cdot,0)|^p]|\xi|^p+\mathbb{E}[\Lambda(\cdot,0)]\right)|O|.
\end{align}
where the equality follows from stationarity and a change of variables in $\Omega$. Hence $\mu_{\xi}(\cdot,O)\in L^1(\Omega)$. We claim that $\mu_{\xi}$ is $\tau$-stationary in the sense that
\begin{equation}\label{eq:mu_stationary}
\mu(\tau_z\w,O)=\mu(\w,O+z)\quad\text{ for all }\w\in\Omega.
\end{equation}
Indeed, given $v\in \xi x+W_0^{1,1}(O,\R^m)$, the map $\widetilde{v}(x)=v(x-z)+\xi z$ belongs to $\xi x+W_0^{1,1}(O+z,\R^m)$ and by stationarity of the energy density $f$ we have
\begin{equation*}
	\int_{O+z}f(\w,x,\nabla\widetilde{v}(x))\dx=\int_{O}f(\w,x+z,\nabla v(x))\dx=\int_{O}f(\tau_z\w,x,\nabla v(x))\dx,
\end{equation*}
which implies \eqref{eq:mu_stationary} by minimizing both sides. Finally, if $(U_j)_{j=1}^n\subset\R^d$ are bounded open sets with
\begin{equation*}
\bigcup_{j=1}^n U_j\subset O, \quad U_j\cap U_k=\emptyset\text{ for all }1\leq j<k\leq n, \quad |O\setminus \bigcup_{j=1}^n U_j|=0,
\end{equation*} 
and for every $1\leq j\leq n$ we are given a function $v_j\in \xi x+W_0^{1,1}(U_j,\R^m)$, then the function $v=\sum_{j=1}^n v_j\chi_{U_j}$ belongs to $\xi x+W^{1,1}_0(O,\R^m)$ and therefore
\begin{equation*}
	\mu_{\xi}(\w,O)\leq F_1(\w,v,O)=\sum_{j=1}^n F_1(\w,v_j,U_j).
\end{equation*}
Minimizing the right-hand side with respect to the variables $v_j$, we deduce subadditivity in the form of
\begin{equation}\label{eq:mu_subadditive}
	\mu_{\xi}(\w,O)\leq \sum_{j=1}^n\mu(\w,U_j).
\end{equation}
It follows from the subadditive ergodic theorem (see \cite[Theorem 2.7]{AkKr}) that for a set of full probability there exists the a~priori random limit
\begin{equation}\label{eq:integer_convergence}
f_{0}(\w,\xi):=\lim_{\substack{n\to +\infty\\ n\in\N}}\frac{1}{|nQ|}\mu_{\xi}(\w,nQ)
\end{equation}
for all cubes of the form $Q=z+(-k,k)^d$ with integer vertices $k\in\N$ and $z\in\Z^d$. To extend the convergence to arbitrary sequences $t\to +\infty$ and general cubes $Q=x+(-\eta,\eta)^d$ with $x\in\R^d$ and $\eta>0$, we argue by approximation following the standard method in the non-degenerate case. Given $\delta>0$, we choose $x^{\pm}_{\delta}\in\mathbb{Q}^d$ and $\eta^{\pm}_{\delta}\in\Q\cap(0,+\infty)$ such that, setting $Q^{\pm}_{\delta}=x^{\pm}_{\delta}+(-\eta^{\pm}_{\delta},\eta^{\pm}_{\delta})^d$, it holds that 
\begin{equation}\label{eq:cube_preparation}
	\begin{split}
	&Q^{-}_{\delta}\subset Q\subset Q^+_{\delta},
	\\
	&\dist(\partial Q_{\delta}^{\pm},\partial Q)>0,
	\\
	&|Q^-_{\delta}|\geq (1-\delta)|Q^+_{\delta}|.
\end{split}
\end{equation}
Then there exists $R=R_{\delta}\in\Z$ such that the cubes $RQ^{\pm}_{\delta}$ have an integer center and integer vertices. Set $t_-=\lfloor t\rfloor$ as the integer part of $t$ and write
\begin{equation*}
tRQ= (tRQ\cap t_-RQ_{\delta}^-)\cup (tRQ\setminus t_-RQ^-_{\delta}).
\end{equation*}
%Given $t>0$, we define the numbers $k_{t}^-=\lfloor \eta t\rfloor-1$ and $k_t^+=\lfloor \eta t\rfloor+1$ and let $z_t\in\Z^d$ be a point such that $|z_t-tx|_{\infty}\leq 1/2$.  Then by construction one has
%\begin{equation*}
%Q_t^-:=z_t+(-k_t^-,k_t^+)\subset tQ\subset z_t+(-k_t^+,k_t^+)^d=:Q_t^+
%\end{equation*}
%and
%\begin{equation}\label{eq:balanced}
%\lim_{t\to +\infty}\frac{|Q_t^-|}{|tQ|}=\lim_{t\to +\infty}\frac{|tQ|}{|Q_t^+|}=1.
%\end{equation}
From subadditivity \eqref{eq:mu_subadditive}, the nonnegativity of $f$ and the bound \eqref{eq:pointwisebound} we deduce that
\begin{equation}\label{eq:fromintegertoall}
	\frac{1}{|tRQ|}\mu_{\xi}(\w,tRQ)\leq \frac{1}{|t_-RQ_{\delta}^-|}\mu_{\xi}(\w,t_-RQ_{\delta}^-)+\frac{(|\xi|^p+1)}{|tRQ|}\int_{tRQ\setminus t_-RQ_{\delta}^-}\underbrace{|A(\w,x)|^p+\Lambda(\w,x)}_{=:\kappa(\w,x)}\dx.
\end{equation}
Due to \eqref{eq:integer_convergence}, the first term on the right-hand side converges to $f_0(\w,\xi)$ as $t\to +\infty$. We want to write the last integral as a difference of integrals over cubes. To this end, note that the second condition in \eqref{eq:cube_preparation} implies that for $t$ large enough the inclusion $t_-RQ_{\delta}^-\subset tRQ$ holds true. Indeed, given $x\in Q_{\delta}^-$ we know that $tRx\in tRQ$ and $|tRx-t_-Rx|\leq R|x|$, but 
\begin{equation*}
	\dist(tRx,\partial tRQ)=tR\,\dist(x,\partial Q)\geq tR\,\dist(\partial Q_{\delta}^-,\partial Q).
\end{equation*}
Hence indeed
\begin{equation*}
\frac{1}{|tRQ|}\int_{tRQ\setminus t_-RQ_{\delta}^-}\kappa(\w,x)\dx=\dashint_{tRQ}\kappa(\w,x)\dx- \left(\frac{t_-}{t}\right)^d\frac{|Q_{\delta}|}{|Q|}\dashint_{t_-RQ_{\delta}^-}\kappa(\w,x)\dx.	
\end{equation*}
The additive ergodic theorem \ref{thm.additiv_ergodic} applied to $\kappa$, \eqref{eq:fromintegertoall} and the third condition in \eqref{eq:cube_preparation} yield that
\begin{equation*}
	\limsup_{t\to +\infty}\frac{1}{|tRQ|}\mu_{\xi}(\w,tRQ)\leq  f_{0}(\w,\xi)+(|\xi|^p+1)\mathbb{E}[\kappa(\cdot,0)]\delta .
\end{equation*}
Since this estimate holds for all diverging sequences $t\to +\infty$, it follows from the arbitrariness of $\delta$ that
\begin{equation*}
	\limsup_{t\to +\infty}\frac{1}{|tQ|}\mu_{\xi}(\w,tQ)\leq f_{0}(\w,\xi).
\end{equation*} 
A similar argument using the cubes $tRQ$ and $(t_-+1)RQ_{\delta}^+$ instead yields the reverse inequality
\begin{equation*}
	f_{0}(\w,\xi)\leq\liminf_{t\to +\infty}\frac{1}{|tQ|}\mu_{\xi}(\w,tQ).
\end{equation*}
We still need to prove that $f_0$ is deterministic\footnote{This is well-known to experts, but we could not find a reference for $\R^d$- stationary, subadditive processes.}. To this end, it suffices to show that $f_0$ is invariant under the group action $\tau_z$. Take any two cubes $Q_1\subset\subset Q\subset\subset Q_2$. Then for $t$ large enough it holds that 
\begin{equation*}
	tQ_1\subset t(Q+z/t)\subset tQ_2.
\end{equation*}
Using stationarity and subadditivity of $\mu_{\xi}$, as well as \eqref{eq:pointwisebound}, we obtain that for $z\in\R^d$ and $\kappa=|A|^p+\Lambda$
\begin{align*}
\frac{1}{|tQ|}\mu_{\xi}(\tau_z\w,tQ)=\frac{1}{|tQ|}\mu_{\xi}(\w,t(Q+z/t))&\leq \frac{1}{|tQ_1|}\mu_{\xi}(\w,tQ_1)+\frac{(|\xi|^p+1)}{|tQ|}\int_{t(Q+z/t)\setminus tQ_1}\kappa(\w,x)\dx
\\
&\leq \frac{1}{|tQ_1|}\mu_{\xi}(\w,tQ_1)+\frac{(|\xi|^p+1)}{|tQ|}\int_{tQ_2\setminus tQ_1}\kappa(\w,x)\dx.
\end{align*}
Applying the additive ergodic theorem \ref{thm.additiv_ergodic} to the last integral we infer that
\begin{equation*}
\limsup_{t\to +\infty}\frac{1}{|tQ|}\mu_{\xi}(\tau_z\w,tQ)\leq f_{0}(\w,\xi)+(|\xi|^p+1)\mathbb{E}[\kappa(\cdot,0)]\frac{|Q_2|-|Q_1|}{|Q|}.	
\end{equation*}
Since $Q_1\subset\subset Q\subset\subset Q_2$ were arbitrary, we conclude that
\begin{equation*}
\limsup_{t\to +\infty}\frac{1}{|tQ|}\mu_{\xi}(\tau_z\w,tQ)\leq f_{0}(\w,\xi).	
\end{equation*}
By a similar argument one proves the reverse inequality for the limit inferior, so that $\tau_z\w$ belongs to the set where the limit exists. Consequently $f_0(\w,\xi)=f_0(\tau_z\w,\xi)$ for almost every $\w\in\Omega$ and every $z\in\R^d$. Ergodicity then yields that $f_0$ is deterministic and we call this value $f_{\rm hom}(\xi)$.

It remains to prove the bounds for $f_{\rm hom}$. Let $Q$ be a fixed cube. Since $f_{\rm hom}$ is deterministic, an application of Fatou's lemma yields that
\begin{equation*}
f_{\rm hom}(\xi)=\mathbb{E}[f_{\rm hom}(\xi)]\leq \liminf_{t\to +\infty}\frac{1}{|tQ|}\mathbb{E}[\mu_{\xi}(\cdot,tQ)]\leq \sup_{|\eta|=1}\mathbb{E}[|\eta A(\cdot,0)|^p]\,|\xi|^p+\mathbb{E}[\Lambda(\cdot,0)],
\end{equation*}
where we used the bound \eqref{eq:mu_bound} in the last inequality. In order to prove the lower bound, note that for any $v\in \xi x+W_0^{1,1}(Q,\R^m)$ H\"older's inequality yields
\begin{equation*}
|\xi|=\left|\dashint_Q \nabla v(x)\dx\right|\leq \dashint_Q |\nabla v(x)|\dx\leq \left(\dashint_Q |\nabla v(x)A(\w,x)|^p\dx\right)^{\frac{1}{p}}\left(\dashint_Q |A(\w,x)^{-1}|^{\frac{p}{(p-1)}}\dx\right)^{\frac{p-1}{p}}.
\end{equation*}
Taking $p$th powers and using Assumption \ref{a.1} we infer that
\begin{equation*}
|\xi|^p\leq \frac{1}{c|Q|}F_1(\w,v,Q)\left(\dashint_Q |A(\w,x)^{-1}|^{\frac{p}{(p-1)}}\dx\right)^{p-1}.
\end{equation*}
Rearranging terms we obtain by the arbitrariness of $v$ that
\begin{equation}\label{eq:mu_superlinear}
c\left(\dashint_Q |A(\w,x)^{-1}|^{\frac{p}{(p-1)}}\dx\right)^{1-p}|\xi|^p\leq \frac{1}{|Q|}\mu_{\xi}(\w,Q).
\end{equation}
Replacing $Q$ by $tQ$ and letting $t\to +\infty$, the ergodic theorem \ref{thm.additiv_ergodic} yields that
\begin{equation*}
c\;\mathbb{E}\left[|A(\cdot,0)^{-1}|^{p/(p-1)}\right]^{1-p}|\xi|^p\leq f_{\rm hom}(\xi).
\end{equation*}
So far we have proved the almost sure existence with an exceptional set depending on $\xi$. In what follows, we remove this constraint. Fix $\xi_0,\xi\in\R^{m\times d}$. In order to compare $\mu_{\xi}$ and $\mu_{\xi_0}$, we use cubes of different size. For a cube $Q=x+(-\eta,\eta)^d$ and $s>0$ set $Q(s)=x+(-s\eta,s\eta)^d$ and fix $\delta>0$. Then there exists a smooth cut-off function $\varphi=\varphi_{\delta,t}\in C_c^{\infty}(\R^d,[0,1])$ such that 
\begin{equation*}
\varphi\equiv 1\text{ on }tQ,\quad\quad\varphi\equiv 0\text{ on }\R^d\setminus tQ(1+\delta/2),\quad\quad\|\nabla\varphi\|_{L^{\infty}(\R^d)}\leq \frac{C_Q}{\delta t}.
\end{equation*}
We first extend a given $v\in\xi x+W^{1,1}_0(tQ,\R^m)$ to $\R^d$ setting $v(x)=\xi x$ on $\R^d\setminus tQ$ and then define $\widetilde{v}\in \xi_0 x+W_0^{1,1}(tQ(1+\delta),\R^m)$ by
\begin{equation*}
	\widetilde{v}(x)=\varphi(x) v(x)+(1-\varphi(x))\xi_0 x.
\end{equation*}
By the properties of $\varphi$, the product rule, and Assumption \ref{a.1}, we can estimate
\begin{align*}
\mu_{\xi_0}(\w,tQ(1+\delta))&\leq F_1(\w,\widetilde{v},tQ(1+\delta))
\\
&\leq\int_{tQ}f(\w,x,\nabla v(x))\dx+\int_{tQ(1+\delta)\setminus tQ}|\nabla\widetilde{v}(x)|^p|A(\w,x)|^p+\Lambda(\w,x)\dx
\\
&\leq F_1(\w,v,tQ)+C\int_{tQ(1+\delta)\setminus tQ}\kappa(\w,x)(|\nabla\varphi(x)|^p|\xi x-\xi_0 x|^p+|\xi|^p+|\xi_0|^p+1)\dx
\\
&\leq F_1(\w,v,tQ)+C\int_{tQ(1+\delta)\setminus tQ}\kappa(\w,x)((\delta t)^{-p}|\xi-\xi_0|^p|x|^p+|\xi|^p+|\xi_0|^p+1)\dx.
\end{align*}
Since $|x|\leq C_Q(1+\delta)t$ on $tQ(1+\delta)$, for $\delta\leq 1$ we can pass to the infimum over $v$ to deduce that
\begin{equation*}
\mu_{\xi_0}(\w,tQ(1+\delta))\leq  \mu_{\xi}(\w,tQ)+C(\delta ^{-p}|\xi-\xi_0|^p+|\xi|^p+|\xi_0|^p+1)\int_{tQ(1+\delta)\setminus tQ}\kappa(\w,x)\dx.
\end{equation*}
Now assume that $\xi_0\in\mathbb{Q}^{m\times d}$ and consider $\w$ in the set of full probability such that the limit of $t\mapsto |tQ|^{-1}\mu_{\xi_0}(\w,tQ)$ at $+\infty$ exists for all rational matrices $\xi_0$ and all cubes. The additive ergodic theorem \ref{thm.additiv_ergodic} applied to $\kappa$ then yields that
\begin{equation}\label{eq:liminf_est}
f_{\rm hom}(\xi_0)\leq \liminf_{t\to +\infty}\frac{1}{|tQ|}\mu_{\xi}(\w,tQ)+C(\delta^{-p}|\xi-\xi_0|^p+|\xi|^p+|\xi_0|^p+1)\mathbb{E}[\kappa(\cdot,0)]\left(1-\frac{1}{(1+\delta)^d}\right).
\end{equation}
A similar construction based on the cubes $tQ$ and $tQ(1-\delta)$ yields that
\begin{equation*}
\mu_{\xi}(\w,tQ)\leq \mu_{\xi_0}(\w,tQ(1-\delta))+C(\delta ^{-p}|\xi-\xi_0|^p+|\xi|^p+|\xi_0|^p+1)\int_{tQ\setminus tQ(1-\delta)}\kappa(\w,x)\dx,
\end{equation*}
which again by the ergodic theorem shows that
\begin{equation}\label{eq:limsup_est}
	\limsup_{t\to +\infty}\frac{1}{|tQ|}\mu_{\xi}(\w,tQ)\leq f_{\rm hom}(\xi_0) +C(\delta^{-p}|\xi-\xi_0|^p+|\xi|^p+|\xi_0|^p+1)\mathbb{E}[\kappa(\cdot,0)]\left(1-(1-\delta)^d\right).
\end{equation}
Combining the two inequalities \eqref{eq:liminf_est} and \eqref{eq:limsup_est} then yields
\begin{align*}
\limsup_{t\to +\infty}\frac{1}{|tQ|}\mu_{\xi}(\w,tQ)&\leq \liminf_{t\to +\infty}\frac{1}{|tQ|}\mu_{\xi}(\w,tQ)
\\
&\quad+C(\delta^{-p}|\xi-\xi_0|^p+|\xi|^p+|\xi_0|^p+1)\mathbb{E}[\kappa(\cdot,0)]\left(2-(1-\delta)^d-\frac{1}{(1+\delta)^d}\right).	
\end{align*}
Considering a sequence of rational matrices that converges to $\xi$ and then letting $\delta\to 0$ we deduce that 
\begin{equation*}
	\limsup_{t\to +\infty}\frac{1}{|tQ|}\mu_{\xi}(\w,tQ)\leq \liminf_{t\to +\infty}\frac{1}{|tQ|}\mu_{\xi}(\w,tQ),
\end{equation*}
so that the limit exists for such $\w$ and consequently the convergence statement indeed holds for a uniform (with respect to $\xi$ and $Q$) set of full probability. It remains to show the continuity of $f_{\rm hom}$. With what we have shown we can evaluate \eqref{eq:liminf_est}  for all $\xi,\xi_0\in\R^{m\times d}$ and infer that for all $0<\delta<1$ it holds that
\begin{equation*}
f_{\rm hom}(\xi_0)\leq  f_{\rm hom}(\xi)+C(\delta^{-p}|\xi-\xi_0|^p+|\xi|^p+|\xi_0|^p+1)\mathbb{E}[\kappa(\cdot,0)]\left(1-\frac{1}{(1+\delta)^d}\right).
\end{equation*}
This estimate implies the continuity of $f_{\rm hom}$.
\end{proof}

\subsection{Optimality of the integrability assumptions}
In the proof of Lemma \ref{l.existence_f_hom} we have strongly used the integrability properties of the matrix $A$ given by Assumption \ref{a.1}. In this section, we show that they are indeed necessary to have a non-degenerate model in the sense of Remark \ref{r.optimal}.
\begin{example}\label{ex:optimal}
We only consider the scalar case $m=1$. The vectorial case can be treated the same way arguing separately for each component. Following \cite{NSS}, we consider a sequence $\lambda_k:\Omega\to (0,+\infty)$ of iid random variables. Those can be extended to a random function $\lambda:\Omega\times\R\to (0,+\infty)$ via piecewise constant interpolation on the intervals $[k,k+1)$ with $k\in\Z$. The resulting function, which is a priori only $\Z$-stationary and ergodic, can be turned into a $\R$-stationary and ergodic weight with the same piecewise constant structure using non-integer translations on an extended probability space (for details on this construction, see \cite[p. 236]{JKO}). We then define the integrand $f(\w,x,\xi)=|\xi A(\w,x)|^p$, where the matrix $A:\Omega\times\R^d\to\mathbb{D}_d$ is the matrix $A(\w,x)=\lambda(\w,x_1)I$ with $I$ denoting the identity from $\R^d$ to $\R^d$. By definition, it holds that $|A(\w,x)|^p=\lambda(\w,x_1)^p$ and $|A(\w,x)^{-1}|^{p/(p-1)}=\lambda(\w,x_1)^{-p/(p-1)}$.

We first discuss the case $\mathbb{E}[\lambda(\cdot,0)^p]=+\infty$. Fix $u\in W^{1,1}_0(kQ)$ and define the lower dimensional cube $Q_{d-1}=(0,1)^{d-1}$. Then for a.e. $x_1\in (0,k)$ it holds that $u(x_1,\cdot)\in W_0^{1,1}(kQ_{d-1})$ and by Tonelli's theorem
\begin{equation*}
\dashint_{kQ}|(\xi+\nabla u(x))A(\w,x)|^p\dx\geq\dashint_0^k\lambda(\w,x_1)^p \dashint_{kQ_{d-1}}|(\xi_2,\ldots,\xi_d)+\nabla_yu(x_1,y)|^p\dy
\end{equation*}
The inner integral on the right-hand side is minimal for $u(x_1,\cdot)\equiv 0$ due to Jensen's inequality. Therefore
\begin{equation*}
	\inf\left\{\dashint_{kQ}|(\xi+\nabla u(x))A(\w,x)|^p\dx:\,u\in W^{1,1}_0(kQ)\right\}\geq |(\xi_2,\ldots,\xi_d)|^p\dashint_{kQ}\lambda(\w,x_1)^p\dx.
\end{equation*}
Combining a truncation of the weight $\lambda$ with the ergodic theorem \ref{thm.additiv_ergodic}, for $\xi\notin\R e_1$ it follows that a.s.
\begin{equation}\label{eq:toinfinity}
	\lim_{k\to +\infty}\inf\left\{\dashint_{kQ}|(\xi+\nabla u(x))A(\w,x)|^p\dx:\,u\in W^{1,1}_0(kQ)\right\}=+\infty.
\end{equation}
Next we consider the case when $\mathbb{E}[\lambda(\cdot,0)^{-p/(p-1)}]=+\infty$ and $\xi=e_1$. In order to treat the case of Dirichlet boundary conditions instead of periodic minimizers, we need to assume in addition that $\mathbb{E}[\lambda(\cdot,0)^p]<+\infty$. Given $s\in (0,1)$, we define the regularized weight $\lambda_s(\w,x_1)=\max\{s,\lambda(\w,x_1)\}$ and the map $u\in W^{1,1}_{\rm per}(kQ)$ by $u(x)=v_{k,s}(x_1)$, where
\begin{equation*}
	v_{k,s}(x_1)=\left(\dashint_0^k\lambda_s(\w,t)^{-p/(p-1)}\,\mathrm{d}t\right)^{-1}\int_0^{x_1}\lambda_s(\w,t)^{-p/(p-1)}\,\mathrm{d}t-x_1.
\end{equation*}
An elementary calculation yields that 
\begin{align*}
	\dashint_{kQ}\lambda_s(\w,x_1)^p|e_1+\nabla u(x)|^p\dx= \left(\dashint_0^k\lambda_s(\w,t)^{-p/(p-1)}\,\mathrm{d}t\right)^{1-p}.
\end{align*}
Passing to the limit in $k$, it follows that
\begin{align*}
	f_{\rm per,s}(e_1,\w):&=\limsup_{k\to +\infty}\inf\left\{\dashint_{kQ}\lambda_{s}(\w,x_1)^p|e_1+\nabla u(x)|^p\dx:\,u\in W^{1,1}_{\rm per}(kQ)\right\}
	\\
	&\leq \lim_{k\to +\infty}\left(\dashint_0^k\lambda_s(\w,t)^{-1/(p-1)}\,\mathrm{d}t\right)^{1-p}=\mathbb{E}[\lambda_s(\cdot,0)^{-1/(p-1)}]^{1-p},
\end{align*}
where the last equality follows from the additive ergodic theorem. Since the weight $\lambda_s$ satisfies the deterministic estimate $s\leq\lambda_s$ and is integrable, our $\Gamma$-convergence result in Theorem \ref{thm.Gamma_pure} holds and by standard arguments (see \cite[Section 4.7]{NSS}) one can prove that almost surely
\begin{equation*}
	f_{\rm per,s}(e_1,\w)=\lim_{k\to +\infty}\inf\left\{\dashint_{kQ}\lambda_s(\w,x)^p|e_1+\nabla u(x)|^p\dx:\,u\in W_0^{1,1}(kQ)\right\}.
\end{equation*}
Since $\lambda\leq\lambda_s$ for all $s>0$, we deduce that
\begin{equation*}
	\mathbb{E}[\lambda_s(\cdot,0)^{-1/(p-1)}]^{1-p}\geq \limsup_{k\to +\infty}\inf\left\{\dashint_{kQ}|(e_1+\nabla u(x))A(\w,x)|^p\dx:\,u\in W_0^{1,1}(kQ)\right\}.
\end{equation*}
Letting $s\to 0$ in the left-hand side, we deduce from the monotone convergence theorem that 
\begin{equation*}
	0=\lim_{k\to +\infty}\inf\left\{\dashint_{kQ}|(e_1+\nabla u(x))A(\w,x)|^p\dx:\,u\in W_0^{1,1}(kQ)\right\},
\end{equation*}
so that the multi-cell formula with affine boundary condition $e_1x$ degenerates. By $p$-homogeneity it follows that we can replace $e_1$ by $re_1$ for all $r\in\R$ and we obtain the statement of Remark \ref{r.optimal}.
\end{example}

\subsection{Proof of unconstrained $\Gamma$-convergence}
Here we prove the $\Gamma$-convergence without boundary conditions and external forces. We first prove the existence of a recovery sequence for a given map $u\in W^{1,p}(D,\R^m)$. We argue by density using a piecewise affine approximation of $u$ and then focus on a single cell $T$ where the target function is affine with gradient $\xi$. We partition this cell (up to a boundary layer) with small cubes. On such cubes $Q$ we use a rescaled version of an almost solution of the infimum problem defining $\mu_{\xi}(\w,\e^{-1}Q)$. This construction will lead to the following result. 
\begin{proposition}\label{p.ub}
Given $u\in W^{1,p}(D,\R^m)$, there exists a sequence $u_{\e}\in W^{1,1}(D,\R^m)$ such that $u_{\e}\to u$ in $L^1(D,\R^m)$ and
\begin{equation*}
\limsup_{\e\to 0}F_{\e}(\w,u_{\e},D)\leq \int_D f_{\rm hom}(\nabla u(x))\,\mathrm{d}x.
\end{equation*}
\end{proposition}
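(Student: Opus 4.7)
The plan is to argue by density, constructing recovery sequences first for piecewise affine target functions and then extending to general $u \in W^{1,p}(D,\R^m)$ using the continuity of $f_{\rm hom}$ established in Lemma \ref{l.existence_f_hom}. Since $f_{\rm hom}$ is continuous and satisfies the $p$-growth bound, the functional $v \mapsto \int_D f_{\rm hom}(\nabla v(x))\dx$ is continuous with respect to strong convergence in $W^{1,p}(D,\R^m)$. Combined with the density of piecewise affine functions in $W^{1,p}(D,\R^m)$ and the $L^1$-lower semicontinuity of the $\Gamma$-$\limsup$, a standard diagonal extraction reduces the problem to constructing a recovery sequence when $u$ is piecewise affine on a finite polyhedral partition $\{T_j\}$ of $D$.

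On each cell $T$ where $u(x) = \xi x + b$, I exploit the multi-cell formula from Lemma \ref{l.existence_f_hom}. Fixing a parameter $\delta > 0$, I cover $T$ up to a boundary layer of measure $O(\delta)$ by disjoint cubes $Q^i_\delta$ of side length $\delta$. For each cube, the cell formula provides, for every sufficiently small $\e > 0$, an almost-minimizer $w_\e^i \in \xi x + W^{1,1}_0(\e^{-1}Q^i_\delta, \R^m)$ of $F_1(\w, \cdot, \e^{-1}Q^i_\delta)$ satisfying $\e^d F_1(\w, w_\e^i, \e^{-1}Q^i_\delta) \to |Q^i_\delta| f_{\rm hom}(\xi)$ as $\e \to 0$. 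I then define $u_\e^\delta(x) := b + \e w_\e^i(x/\e)$ on each $Q^i_\delta$ and $u_\e^\delta(x) := u(x)$ on the remaining boundary layer (and similarly for neighboring cells). Since each $w_\e^i$ agrees with $\xi x$ on $\partial(\e^{-1}Q^i_\delta)$, the function $b + \e w_\e^i(x/\e)$ coincides with $u(x) = \xi x + b$ on $\partial Q^i_\delta$, and hence $u_\e^\delta$ is continuous across all interior interfaces---including those between adjacent affine cells of different gradients---and therefore lies in $W^{1,1}(D,\R^m)$.

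The main estimate then follows by summing contributions over cubes and controlling the boundary layer. On each $Q^i_\delta$, the change of variables $y = x/\e$ yields $F_\e(\w, u_\e^\delta, Q^i_\delta) = \e^d F_1(\w, w_\e^i, \e^{-1}Q^i_\delta)$, whose limit as $\e \to 0$ is $|Q^i_\delta| f_{\rm hom}(\xi)$. On the boundary layer $L_\delta$, Assumption \ref{a.1} gives $F_\e(\w, u, L_\delta) \leq \int_{L_\delta}(|\xi A(\w,x/\e)|^p + \Lambda(\w,x/\e))\dx$, and here the weak $L^1$-convergence from Lemma \ref{l.weakL1} (tested against the bounded function $\mathbf{1}_{L_\delta}$) ensures that this contribution tends to $(C_0|\xi|^p + C_1)|L_\delta| = O(\delta)$ as $\e \to 0$. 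For the $L^1$-convergence $u_\e^\delta \to u$, Poincar\'e's inequality on $\e^{-1}Q^i_\delta$ yields $\|u_\e^\delta - u\|_{L^1(Q^i_\delta)} \leq C\delta \e^d \int_{\e^{-1}Q^i_\delta}|\nabla w_\e^i(y) - \xi|\dy$, and the integral is controlled by $O(\delta^d \e^{-d})$ via the H\"older argument from the proof of Lemma \ref{l.compactness} combined with the almost-minimality of $w_\e^i$ and the additive ergodic theorem; summing over the $O(\delta^{-d})$ cubes gives $\|u_\e^\delta - u\|_{L^1(T)} = O(\delta)$.

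A diagonal extraction $\delta(\e) \to 0$ sufficiently slowly then produces the desired sequence $u_\e := u_\e^{\delta(\e)}$. The main obstacle, in my view, is handling the boundary layer: in the non-degenerate setting one can simply insert a cut-off between cubes of radius $\delta$, but here any such cut-off would interact with the oscillating weight $|A(\w,x/\e)|^p$ which does not possess higher integrability. The present construction sidesteps this by gluing each cube directly to the affine function $u$ through the affine boundary condition built into the cell formula, so that the only non-cell contribution is the integral of the upper bound $|\xi A|^p + \Lambda$ over a set of vanishing measure---a quantity that is handled precisely by the weak $L^1$-convergence in Lemma \ref{l.weakL1}. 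Uniformity in $\w$ across the countable family of cubes arising in the construction is not an issue: Lemma \ref{l.existence_f_hom} provides a single full-probability set on which the multi-cell limit holds for every cube, and Lemma \ref{l.weakL1} is invoked only countably often.
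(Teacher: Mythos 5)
Your proposal is correct and follows essentially the same route as the paper: reduce by density to piecewise affine $u$, tile each cell with small cubes, glue rescaled almost-minimizers of the cell problem $\mu_\xi(\w,\cdot)$ to the affine function across cube boundaries (so no cut-off is needed), control the boundary layer via the ergodic theorem / Lemma~\ref{l.weakL1}, estimate the $L^1$-distance to $u$ by Poincar\'e and H\"older as in Lemma~\ref{l.compactness}, and conclude by a diagonal argument in $\delta$. The only cosmetic difference is that the paper passes to the $L^1$-limit $\widetilde u_\delta$ of $u^\delta_\e$ and invokes the lower semicontinuity of the abstract $\Gamma$-$\limsup$ $F''$, whereas you estimate $\|u^\delta_\e - u\|_{L^1}$ directly and extract a diagonal sequence; both work, provided one is careful that your $O(\delta)$ bound only holds for $\e$ small enough depending on $\delta$ and $\w$, which is exactly what the diagonal argument needs.
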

\begin{proof}
For convenience, we define the abstract $\Gamma$-upper limit of $F_{\e}$ as the function $F''(\w,\cdot):L^1(D,\R^m)\to [0,+\infty]$ given by
\begin{equation*}
F''(\w,u)=\inf\{\limsup_{\e\to 0}F_{\e}(\w,u_{\e},D):\,u_{\e}\to u\text{ in }L^1(D,\R^m)\}.
\end{equation*}
It is well-known that $u\mapsto F''(\w,u)$ is lower semicontinuous on $L^1(D,\R^m)$. We will show that 
\begin{equation}\label{eq:Gamma-limsup}
F''(\w,u)\leq \int_D f_{\rm hom}(\nabla u(x))\dx	
\end{equation}
for all $u\in W^{1,p}(D,\R^m)$. Since $D$ has Lipschitz boundary, we can assume without loss of generality that $u\in W^{1,p}(\R^d,\R^m)$. Due to the $p$-growth from above and continuity of $f_{\rm hom}$, the right-hand side in \eqref{eq:Gamma-limsup} is continuous with respect to strong convergence in $W^{1,p}(D,\R^m)$. Hence by standard density arguments it suffices to prove \eqref{eq:Gamma-limsup} for continuous, piecewise affine functions $u:\R^d\to\R^m$, that means, there exists a locally finite triangulation $\{T_i\}_{i\in\N}$ of $\R^d$ into non-degenerate $(d+1)$-simplices such that $u|_{T_i}$ is affine for every $i\in\N$ and $u$ is continuous. We will provide a local construction such that $u_{\e}\in u+W^{1,1}_0(T_i,\R^m)$ for all $i\in\N$, which due to the continuity of $u$ can then be glued together to obtain a full recovery sequence $u_{\e}\in W^{1,1}_{\rm loc}(\R^d,\R^m)$. Hence for the moment we consider a single simplex $T=T_i$ and write $u|_T(x)=\xi x+b$. Given $0<\delta\ll 1$, consider the set of cubes
\begin{equation*}
\mathcal{Q}_{\delta}(T):=\{Q=\delta z+(-\delta/2,\delta/2)^d:\, z\in\Z^d,\,Q\subset T\}
\end{equation*}
and define an interior approximation of $T$ by $T_{\delta}:=\bigcup_{Q\in\mathcal{Q}_{\delta}(T)}Q$. We will define the sequence $u_{\e}=u_{\e,\delta}$ separately on each cube in $\mathcal{Q}_{\delta}(T)$. For $\e>0$ and $Q\in\mathcal{Q}_{\delta}(T)$ we choose $v_{\e,Q}\in W^{1,1}_0(Q/\e,\R^m)$ satisfying
\begin{equation*}
\int_{Q/\e}f(\w,x,\xi+\nabla v(x))\dx\leq \mu_{\xi}(\w,Q/\e)+\e,
\end{equation*}
for which we recall that
\begin{equation*}
	\mu_{\xi}(\w,Q/\e)=\inf\left\{\int_{Q/\e}f(\w,x,\nabla v(x))\dx:\,v\in \xi x+W_0^{1,1}(Q/\e,\R^m)\right\}.
\end{equation*}
We then define $u_{\e}$ on $T$ (and depending on $\delta$) by $u_{\e}(x)=\xi x+b+\sum_{Q\in\mathcal{Q}_{\delta}(T)}\e v_{\e, Q}(x/\e)\chi_{Q}(x)$. Due to the zero boundary conditions of $v_{\e,Q}$ it holds that $u_{\e}\in u+W^{1,1}_0(T,\R^m)$. From Lemma \ref{l.existence_f_hom} we know that
\begin{equation*}
\lim_{\e\to 0}\frac{1}{|Q/\e|}\mu_{\xi}(\w,Q/\e)\to f_{\rm hom}(\xi).	
\end{equation*}
Using the upper bound for $f$ in Assumption \ref{a.1}, we can estimate the energy of $u_{\e}$ on the simplex $T$ by
\begin{align*}
F_{\e}(\w,u_{\e},T)&= \sum_{Q\in\mathcal{Q}_{\delta}(T)}\int_Qf(\w,\tfrac{x}{\e},\xi+\nabla v_{\e,Q}(\tfrac{x}{\e}))\dx+\int_{T\setminus T_{\delta}}f(\w,\tfrac{x}{\e},\xi)\dx
\\
&\overset{y=x/\e}{\leq} \sum_{Q\in\mathcal{Q}_{\delta}(T)}\e^{d}\int_{Q/\e}f(\w,y,\xi+\nabla v_{\e,Q}(y))\dy+\e^d\int_{(T\setminus T_{\delta})/\e}|\xi|^p|A(\w,y)|^p+\Lambda(\w,y)\dy.
\end{align*}
Since $|Q/\e|=|Q|\e^{-d}$ and $\mathcal{Q}_{\delta}(T)$ is a family of pairwise disjoint cubes, we deduce in the limit $\e\to 0$ that
\begin{align*}
	\limsup_{\e\to 0}F_{\e}(\w,u_{\e},T)&\leq  \sum_{Q\in\mathcal{Q}_{\delta}(T)}|Q|f_{\rm hom}(\xi)+\limsup_{\e\to 0}\e^d\int_{(T\setminus T_{\delta})/\e}|\xi|^p|A(\w,y)|^p+\Lambda(\w,y)\dy
	\\
	&\leq \int_T f_{\rm hom}(\nabla u(y))\dy+(|\xi|^p+1)\limsup_{\e\to 0}\e^d\int_{(T\setminus T_{\delta})/\e}|A(\w,y)|^p+\Lambda(\w,y)\dy.
\end{align*}
The last limit can be treated with the ergodic theorem \ref{thm.additiv_ergodic} applied to $|A|^p+\Lambda$, which yields that
\begin{equation*}
\lim_{\e\to 0}\e^d\int_{(T\setminus T_{\delta})/\e}|A(\w,y)|^p+\Lambda(\w,y)\dy=|T\setminus T_{\delta}|\,\mathbb{E}[|A(\cdot,0)|^p+\Lambda(\cdot,0)].
\end{equation*}
The right-hand side vanishes when $\delta\to 0$. Summing up, we have proved that
\begin{equation}\label{eq:almost_recovery}
\limsup_{\e\to 0}F_{\e}(\w,u_{\e},T)\leq \int_Tf_{\rm hom}(\nabla u(x))\dx+(|\xi|^p+1)|T\setminus T_{\delta}|\,\mathbb{E}[|A(\cdot,0)|^p+\Lambda(\cdot,0)].	
\end{equation}
We next consider the asymptotic behavior of $u_{\e}$. Due to \eqref{eq:almost_recovery} we can combine Poincar\'e's inequality and Lemma \ref{l.compactness} to infer that $u_{\e}$ is bounded in $W^{1,1}(T,\R^m)$ and that up to a subsequence (not relabeled) $u_{\e}\to \widetilde{u}_{\delta}$ in $L^1(T,\R^m)$. Let us estimate the difference between $\widetilde{u}_{\delta}$ and the target function $u$ in $L^1(T,\R^m)$. By Poincar\'e's inequality on the small cubes $Q\in\mathcal{Q}_{\delta}$, we have that
\begin{align*}
\|\widetilde{u}_{\delta}-u\|_{L^1(T)}&=\lim_{\e\to 0}\sum_{Q\in\mathcal{Q}_{\delta}(T)} \int_Q|\e v_{\e,Q}(\tfrac{x}{\e})|\dx\leq C\delta\liminf_{\e\to 0}\sum_{Q\in\mathcal{Q}_{\delta}(T)}\int_{Q}|\nabla v_{\e,Q}(\tfrac{x}{\e})|\dx
\\
&\leq C\delta \left(|\xi||T|+\liminf_{\e\to 0}\sum_{Q\in\mathcal{Q}_{\delta}(T)}\int_{Q}|\xi+\nabla v_{\e,Q}(\tfrac{x}{\e})|\dx\right)
\\
&\leq  C\delta \left(|\xi||T|+\liminf_{\e\to 0}\int_{T}|\nabla u_{\e}(x)|\dx\right).
\end{align*}
Applying H\"older's inequality, the right-hand side can be further estimated leading to
\begin{equation*}
\|\widetilde{u}_{\delta}-u\|_{L^1(T)}\leq C\delta\left(|\xi||T|+\lim_{\e\to 0}\left(\int_{T}f(\w,\tfrac{x}{\e},\nabla u_{\e})\dx\right)^{\frac{1}{p}}\left(\int_T |A(\w,\tfrac{x}{\e})^{-1}|^{\frac{p}{p-1}}\dx\right)^{\frac{p-1}{p}}\right).
\end{equation*}
Due to Lemma \ref{l.weakL1} and \eqref{eq:almost_recovery} the term inside the parenthesis is finite, so we conclude that $\widetilde{u}_{\delta}\to u$ in $L^1(T,\R^m)$ as $\delta\to 0$. Considering then the global sequence $u_{\e}\in W^{1,1}(\R^d,\R^m)$ and the corresponding $L^1(D,\R^m)$-limit $\widetilde{u}_{\delta}$, from \eqref{eq:almost_recovery} and lower semicontinuity of $u\mapsto F''(\w,u)$ we deduce that
\begin{align*}
F''(\w,u)&\leq\liminf_{\delta\to 0}F''(\w,\widetilde{u}_{\delta})\leq\liminf_{\delta\to 0} \sum_{T\cap D\neq\emptyset}\limsup_{\e\to 0}F_{\e}(\w,u_{\e},T)\leq \sum_{T\cap D\neq\emptyset}\int_Tf_{\rm hom}(\nabla u(x))\dx
\\
&\leq \int_D f_{\rm hom}(\nabla u(x))\dx+\sum_{T\cap\partial D\neq\emptyset}\int_Tf_{\rm hom}(\nabla u(x))\dx.
\end{align*}
For the fixed piecewise affine function $u$, we can refine the triangulation with triangles of arbitrarily small diameter and repeat the above construction to make the last term arbitrarily small. Therefore
\begin{equation*}
F''(\w,u)\leq \int_{D}f_{\rm hom}(\nabla u(x))\dx,
\end{equation*}
that is, equation \eqref{eq:Gamma-limsup} holds true for all piecewise affine functions, which concludes the proof.
\end{proof}

Next we prove the $\Gamma$-liminf inequality. The basic idea is to use the standard blow-up method. The subtle point is that we need to locally adapt the boundary values of a sequence with equibounded energy, which is beyond the standard framework with the presence of the weight $A(\w,\cdot)$. We first prove the lower bound under the assumptions that the sequence under consideration is bounded in $L^{\infty}$. With a vectorial truncation we can then remove this restriction in a second step. Since we need the vectorial truncation several times in the paper, we formulate it as a separate result below. It is here where we need that $A$ is a diagonal matrix.
\begin{lemma}\label{l.truncation}
Let $u_{\e}\in W^{1,1}(D,\R^m)$ be such that $u_{\e}\to u$ in $L^1(D,\R^m)$ as $\e\to 0$. Then for every $\delta>0$ there exists a constant $C_{\delta}>0$ and a function $u_{\e,\delta}\in W^{1,1}(D,\R^m)$ such that $u_{\e,\delta}=u_{\e}$ a.e. on $\{|u_{\e}|\leq \delta^{-1}\}$ and for a.e. $x\in D$
\begin{equation*}
	|u_{\e,\delta}(x)|\leq |u_{\e}(x)|,\quad\quad |u_{\e,\delta}(x)|\leq C_{\delta}.
\end{equation*}
Moreover, for $\e$ small enough it holds that
\begin{equation*}
F_{\e}(\w,u_{\e,\delta},D)\leq (1+\delta)F_{\e}(\w,u_{\e},D)+\delta.
\end{equation*}
\end{lemma}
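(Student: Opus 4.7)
The plan is to perform componentwise truncation: set $u_{\e,\delta}^i(x) := \phi_i(u_\e^i(x))$, where each $\phi_i\colon\R\to\R$ is a $1$-Lipschitz scalar cutoff satisfying $\phi_i(t)=t$ for $|t|\leq\delta^{-1}$ and $|\phi_i|\leq k_{i,\e}$, with truncation levels $k_{i,\e}\in[\delta^{-1},C_\delta]$ to be chosen via pigeonhole. Componentwise (as opposed to radial) truncation is essential because $A$ is diagonal: it yields $\nabla u_{\e,\delta}(x)=D(x)\nabla u_\e(x)$ where $D(x)$ is the diagonal $m\times m$ matrix with entries $\phi_i'(u_\e^i(x))\in[0,1]$, and the diagonal factors commute with the right-action of $A$ on $\nabla u_\e$, yielding the pointwise bound $|\nabla u_{\e,\delta}A|\leq|\nabla u_\e A|$. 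Together with $|\phi_i|\leq k_{i,\e}\leq C_\delta$, the properties $u_{\e,\delta}=u_\e$ on $\{|u_\e|\leq\delta^{-1}\}$, $|u_{\e,\delta}|\leq|u_\e|$, and $|u_{\e,\delta}|\leq\sqrt{m}\,C_\delta$ are immediate.

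For the energy estimate, let $B_\e:=\bigcup_i\{|u_\e^i|\geq k_{i,\e}\}$, so that $\nabla u_{\e,\delta}=\nabla u_\e$ on $D\setminus B_\e$. The upper bound in Assumption~\ref{a.1} combined with $|\nabla u_{\e,\delta} A|^p\leq|\nabla u_\e A|^p$ then gives
\begin{equation*}
F_\e(\w,u_{\e,\delta},D)\leq F_\e(\w,u_\e,D)-\int_{B_\e}f(\w,\tfrac{x}{\e},\nabla u_\e)\dx+\int_{B_\e}|\nabla u_\e A(\w,\tfrac{x}{\e})|^p\dx+\int_{B_\e}\Lambda(\w,\tfrac{x}{\e})\dx.
\end{equation*}
Markov's inequality yields $|B_\e|\leq C\delta$ uniformly in $\e$ (using $u_\e\to u$ in $L^1$ and $k_{i,\e}\geq\delta^{-1}$), and the equi-integrability of $\Lambda(\w,\cdot/\e)$ furnished by Lemma~\ref{l.weakL1} absorbs the $\Lambda$-integral into the additive $\delta$ for $\e$ small. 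It remains to choose $k_{i,\e}$ so that the excess $\int_{B_\e}[|\nabla u_\e A|^p-f(\w,\tfrac{\cdot}{\e},\nabla u_\e)]\dx$ is at most $\delta F_\e(\w,u_\e,D)$.

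The selection is by pigeonhole on dyadic levels: fix $N\sim 1/\delta$ and $\kappa_j=2^j\delta^{-1}$ for $j=0,\ldots,N$, with $\kappa_N\leq C_\delta$. For each component $i$, the disjoint annuli $E_j^i=\{\kappa_j\leq|u_\e^i|<\kappa_{j+1}\}$ satisfy $\sum_{j=0}^{N-1}\int_{E_j^i}f\dx\leq F_\e(\w,u_\e,D)$, so there exists $j^*_i=j^*_i(\e)\in\{0,\ldots,N-1\}$ with $\int_{E_{j^*_i}^i}f\dx\leq F_\e/N\leq\delta F_\e$. Taking $\phi_i$ to vary only inside $E_{j^*_i}^i$ and enforcing $\nabla u_{\e,\delta}^i=0$ for $|u_\e^i|\geq\kappa_{j^*_i+1}$, the pigeonhole bound together with the pointwise inequality $|\nabla u_\e A|^p\leq f/c$ from Assumption~\ref{a.1} controls the contribution from the chosen annulus.

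The main obstacle will be the ``far tail'' $\{|u_\e^i|\geq\kappa_{j^*_i+1}\}$: its Lebesgue measure is bounded by $C\delta$ via Markov, but $|\nabla u_\e A|^p$ is only bounded in $L^1(D)$ uniformly in $\e$ and in general not equi-integrable, so mere smallness of measure does not by itself make the tail integral small. Overcoming this requires refining the pigeonhole so that both the chosen annulus and the tail beyond it are controlled simultaneously; one option is to select $j^*_i$ as the largest admissible index, confining the tail to at most one further dyadic layer whose contribution is then itself within the pigeonhole budget. Combined with the diagonal structure of $A$ used to decouple the truncation from the weighted gradient, this delivers the estimate $F_\e(\w,u_{\e,\delta},D)\leq(1+\delta)F_\e(\w,u_\e,D)+\delta$ for $\e$ sufficiently small.
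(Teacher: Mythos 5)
There is a genuine gap, and it is exactly at the point you flag but do not resolve: the far tail. Because the truncation levels must stay below $C_\delta$ (otherwise you lose the uniform $L^\infty$ bound), the set $\{|u_\e^i|\geq C_\delta\}$ is always contained in the region where your cutoff is active, and it lies \emph{beyond} all the dyadic layers, so no pigeonhole over $j\in\{0,\dots,N-1\}$ — in particular not the ``largest admissible index'' variant — can control it. On that tail the $i$-th row of $\nabla u_{\e,\delta}$ does vanish, but for $m\geq 2$ the rows of the \emph{other} components survive untouched, and since $f$ has no structure beyond the two-sided growth condition, the only available comparison is $f(\w,\tfrac{x}{\e},\nabla u_{\e,\delta})\leq |\nabla u_{\e,\delta}A|^p+\Lambda\leq \tfrac1c f(\w,\tfrac{x}{\e},\nabla u_\e)+\Lambda$, which incurs the loss factor $1/c$ (with $c\leq 1$ in general) on a set that need not be energy-small: the energy of $u_\e$ may concentrate precisely where one component is huge while another component, with large gradient, remains bounded. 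Smallness of the Lebesgue measure does not help, since $|\nabla u_\e A|^p$ is not equi-integrable, as you yourself note. So the componentwise scheme with independently chosen levels cannot deliver $(1+\delta)F_\e+\delta$ for $m\geq2$, and your premise that componentwise truncation is ``essential because $A$ is diagonal'' is in fact inverted.

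The mechanism that closes this gap — and is what the paper does — is to truncate \emph{all components simultaneously} with a vectorial map $\psi_r\in C_c^\infty(\R^m,\R^m)$ ($1$-Lipschitz, $\psi_r={\rm id}$ on $B_r$, ${\rm supp}\,\psi_r\subset B_{3r}$, as in \cite{CDMSZ18}), at radii $r_0=N$, $r_{i+1}=3r_i$. Then on the far tail $\{|u_\e|\geq r_{i+1}\}$ the \emph{entire} gradient of the truncated function vanishes, so the energy there is bounded by $\Lambda(\w,\cdot/\e)$ alone, which is equi-integrable by Lemma \ref{l.weakL1} and integrated over $\{|u_\e|\geq N\}$, a set of uniformly small measure by Markov and the $L^1$-convergence; the $1/c$-type loss is confined to the pairwise disjoint transition annuli $\{r_i<|u_\e|<r_{i+1}\}$, $i=1,\dots,N$, and is averaged away exactly as in your pigeonhole, giving the factor $1+C/N$. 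The diagonal structure of $A$ enters precisely to make this \emph{vectorial} truncation compatible with the weight: the chain-rule factor $\nabla\psi_{r_i}(u_\e)$ acts on the left (mixing rows) with operator norm at most $1$, so each column of the gradient does not increase, while the diagonal $A$ acts on the right by scaling columns, whence $|\nabla(\psi_{r_i}\circ u_\e)\,A|_F\leq|\nabla u_\e\,A|_F$. Your elementary observation that a componentwise cutoff gives $|D\nabla u_\e A|\leq|\nabla u_\e A|$ for any $A$ is correct but does not compensate for losing the crucial feature that all rows are killed at once in the tail; for $m=1$ your argument can be patched by this refinement (tail contributes only $\Lambda$), but for $m\geq2$ you need the vectorial truncation.
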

\begin{proof}
As proven in \cite[Section 4]{CDMSZ18}, for every $r>0$ there exists $\psi_{r}\in C_c^{\infty}(\R^m,\R^m)$ that is $1$-Lipschitz, $\psi_r(x)=x$ for $|x|\leq r$, $|\psi_r(x)|\leq 2r$ and ${\rm supp}(\psi_r)\subset B_{3r}(0)$. Note that the $1$-Lipschitz continuity implies that $|\psi_r(x)|\leq |x|$. Adapting \cite[Lemma 4.1]{CDMSZ18} to our setting, we now construct a suitable truncation of $u_{\e}$. To this end, we discretize the co-domain of $u_{\e}$ and use an averaging argument to find a good truncation level. Fix $N\in\N$ and for $0\leq i\leq N$ let $r_{0}=N$ and $r_{i+1}=3r_i$. Then $N\leq r_i\leq 3^NN<+\infty$. We consider the truncated sequence $u_{\e,i}=\psi_{r_{i}}\circ u_{\e}$. By the chain rule $\nabla u_{\e,i}=\nabla \psi_{r_i}(u_{\e})\nabla u_{\e}$, so that 
\begin{equation*}
\nabla u_{\e,i}=\nabla u_{\e}\quad\text{ a.e. on }\{|u_{\e}|\leq r_i\},
	\quad\quad
\nabla u_{\e,i}(x)=0\quad\text{ a.e. on }\{|u_{\e}|\geq 3r_i=r_{i+1}\}.
\end{equation*}
In the intermediate region $\{r_i<|u_{\e}|<r_{i+1}\}$ we use that $A$ is a diagonal matrix. The $k$th partial derivatives of $u_{\e,i}$ satisfies
\begin{equation*}
|\partial_k u_{\e,i}|=|\nabla \psi_{r_i}(u_{\e})\partial_ku_{\e}|\leq |\partial_k u_{\e}|,
\end{equation*}
where the last inequality follows from the $1$-Lipschitz continuity of $\psi_{r_i}$. Write $A={\rm diag}(\lambda_{k})_{k=1}^d$. Then for the Frobenius norm $|\cdot|_F$ we have that
\begin{equation*}
|\nabla u_{\e,i}(x)A(\w,\tfrac{x}{\e})|_F=\left(\sum_{k}\lambda_{k}(\w,\tfrac{x}{\e})^2|\partial_{k}u_{\e,i}|^2\right)^{\frac{1}{2}}\leq\left(\sum_{k,\ell}\lambda_{k}(\w,\tfrac{x}{\e})^2|\partial_k u_{\e}|^2\right)^{\frac{1}{2}} =|\nabla u_{\e}(x)A(\w,\tfrac{x}{\e})|_F.
\end{equation*}
Using the upper bound in Assumption \ref{a.1}, it follows from the equivalence of norms on $\R^{m\times d}$ that
\begin{equation*}
	F_{\e}(\w,u_{\e,i},D)\leq F_{\e}(\w,u_{\e},D)+\int_{\{r_i< |u_{\e}|< r_{i+1}\}}C|\nabla u_{\e}(x)A(\w,\tfrac{x}{\e})|^p+\Lambda(\w,\tfrac{x}{\e})\dx+\int_{\{|u_{\e}|\geq r_{i+1}\}}\Lambda(\w,\tfrac{x}{\e})\dx.
\end{equation*}
By construction the sets $\{r_i< |u_{\e}|< r_{i+1}\}_{i=1}^N$ are pairwise disjoint and contained in $D$, so there exists $1\leq i_*\leq N$ such that
\begin{align*}
	F_{\e}(\w,u_{\e,i_*},D)&\leq\frac{1}{N}\sum_{i=1}^N F_{\e}(\w,u_{\e,i},D)
	\\
	&\leq F_{\e}(\w,u_{\e},D)+\frac{C}{N}\int_{D}|\nabla u_{\e}(x)A(\w,\tfrac{x}{\e})|^p\dx+2\int_{\{|u_{\e}|\geq N\}}\Lambda(\w,\tfrac{x}{\e})\dx
	\\
	&\leq F_{\e}(\w,u_{\e},D)+\frac{C}{N}F_{\e}(\w,u_{\e},D)+2\int_{\{|u_{\e}|\geq N\}}\Lambda(\w,\tfrac{x}{\e})\dx.
\end{align*}
Due to Lemma \ref{l.weakL1} we know that $\Lambda(\w,\cdot/\e)$ is equi-integrable as $\e\to 0$. Moreover, since $u_{\e}$ converges in $L^1$, it follows that the set $\{|u_{\e}|>N\}$ has small measure for $N$ large uniformly in $\e$. Hence, given $\delta>0$ and $\e>0$ small enough, there exists $N_{\delta}\geq \delta^{-1}$ such that
\begin{equation*}
	2\int_{\{|u_{\e}|\geq N_{\delta}\}}\Lambda(\w,\tfrac{x}{\e})\dx\leq \delta
\end{equation*}
and $1+C/N_{\delta}\leq 1+\delta$. For such $N_{\delta}$ we find that
\begin{equation*}
	F_{\e}(\w,u_{\e,i_*},D)\leq (1+\delta)F_{\e}(\w,u_{\e,},D)+\delta.
\end{equation*}
Since $r_{i_*}\geq N_{\delta}\geq \delta^{-1}$, it suffices to set $u_{\e,\delta}:=u_{\e,i_*}$ and $C_{\delta}=3^{N_{\delta}+1}N_{\delta}$. The property $|u_{\e,\delta}|\leq |u_{\e}|$ follows from the inequality $|\psi_{r}(x)|\leq |x|$.
\end{proof}

The following proposition establishes the lower bound for the $\Gamma$-convergence statement in Theorem \ref{thm.Gamma_pure}.
\begin{proposition}\label{p.lb}
Let $u_{\e}\in L^1(D,\R^m)$ and $u\in W^{1,1}(D,\R^m)$ be such that $u_{\e}\to u$ in $L^1(D,\R^m)$ as $\e\to 0$. Then
\begin{equation*}
\int_D f_{\rm hom}(\nabla u(x))\dx\leq\liminf_{\e\to 0}F_{\e}(\w,u_{\e},D).	
\end{equation*}
In particular, $u\in W^{1,p}(D,\R^m)$ whenever the right-hand side is finite.
\end{proposition}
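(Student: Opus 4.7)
The plan is to adapt the blow-up method of Fonseca--Müller, preceded by the truncation of Lemma~\ref{l.truncation}, which reduces the argument to uniformly $L^\infty$-bounded sequences at the cost of a multiplicative factor $(1+\delta)$ and an additive $\delta$ in the energy. I would first assume $\liminf_{\e\to 0}F_\e(\w,u_\e,D)=L<+\infty$ and extract a subsequence with $F_{\e_n}(\w,u_{\e_n},D)\to L$; Lemma~\ref{l.compactness} gives $u\in W^{1,p}(D,\R^m)$. For fixed $\delta>0$, Lemma~\ref{l.truncation} supplies $u_{\e,\delta}$ with $\|u_{\e,\delta}\|_{L^\infty}\leq C_\delta$ uniformly in $\e$, $|u_{\e,\delta}|\leq|u_\e|$, and energy bounded by $(1+\delta)F_\e(\w,u_\e,D)+\delta$. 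Inspection of that proof shows $u_{\e,\delta}=\psi_{r}\circ u_\e$ for a truncation level $r\geq\delta^{-1}$ lying in a finite set $\{r_1,\ldots,r_{N_\delta}\}$; a further extraction fixes $r=r^*(\delta)$ (with $r^*(\delta)\to+\infty$ as $\delta\to 0$) and gives $u_{\e_n,\delta}\to v_\delta:=\psi_{r^*(\delta)}\circ u$ in every $L^q$, $q<\infty$, with $v_\delta\in W^{1,p}$, $|\nabla v_\delta|\leq|\nabla u|$, and $\nabla v_\delta\to\nabla u$ a.e.\ as $\delta\to 0$.

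Next, consider the nonnegative Radon measures on $\overline{D}$ defined by $\mu_n^\delta(E)=\int_Ef(\w,x/\e_n,\nabla u_{\e_n,\delta})\dx$, bounded by $(1+\delta)L+\delta$. After extraction, $\mu_n^\delta\overset{*}{\rightharpoonup}\mu^\delta$ with Lebesgue decomposition $\mu^\delta=g_\delta\mathcal{L}^d+(\mu^\delta)^s$. It suffices to prove $g_\delta(x_0)\geq f_{\rm hom}(\nabla v_\delta(x_0))$ for $\mathcal{L}^d$-a.e.\ $x_0\in D$: integration then gives $\int_Df_{\rm hom}(\nabla v_\delta)\dx\leq\mu^\delta(\overline{D})\leq(1+\delta)L+\delta$, and sending $\delta\to 0$, the $p$-growth of $f_{\rm hom}$ from Lemma~\ref{l.existence_f_hom} together with $|\nabla v_\delta|\leq|\nabla u|\in L^p$ and the a.e.\ convergence of $\nabla v_\delta$ allow dominated convergence to conclude $\int_Df_{\rm hom}(\nabla u)\dx\leq L$. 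For the pointwise blow-up, I would select $x_0$ to be simultaneously a Lebesgue point of $g_\delta$ and $\nabla v_\delta$, a point of approximate $W^{1,p}$-differentiability of $v_\delta$ in the sense that $\rho^{-d-p}\int_{Q_\rho(x_0)}|v_\delta-\ell|^p\dx\to 0$ for $\ell(x):=v_\delta(x_0)+\nabla v_\delta(x_0)(x-x_0)$, and a point of zero density for $(\mu^\delta)^s$. Along radii $\rho\to 0$ with $\mu^\delta(\partial Q_\rho(x_0))=0$, one then has $g_\delta(x_0)=\lim_{\rho\to 0}\lim_{n\to\infty}\rho^{-d}F_{\e_n}(\w,u_{\e_n,\delta},Q_\rho(x_0))$. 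Fixing $\eta\in(0,1/2)$ and a cut-off $\varphi\in C_c^\infty(Q_\rho(x_0);[0,1])$ equal to $1$ on $Q_{(1-\eta)\rho}(x_0)$ with $\|\nabla\varphi\|_\infty\leq C/(\eta\rho)$, set $\tilde u_n:=\varphi\,u_{\e_n,\delta}+(1-\varphi)\ell$; then $\tilde u_n-\ell\in W^{1,1}_0(Q_\rho(x_0),\R^m)$, so the rescaling $y\mapsto\tilde u_n(\e_n y)/\e_n$ is admissible in $\mu_{\nabla v_\delta(x_0)}(\w,Q_\rho(x_0)/\e_n)$ and Lemma~\ref{l.existence_f_hom} yields
\begin{equation*}
\liminf_{n\to\infty}\rho^{-d}F_{\e_n}(\w,\tilde u_n,Q_\rho(x_0))\geq f_{\rm hom}(\nabla v_\delta(x_0)).
\end{equation*}

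The main obstacle is controlling the annular excess $F_{\e_n}(\w,\tilde u_n,A_\eta)$ with $A_\eta:=Q_\rho(x_0)\setminus Q_{(1-\eta)\rho}(x_0)$, since $F_{\e_n}(\w,u_{\e_n,\delta},Q_\rho(x_0))\geq F_{\e_n}(\w,\tilde u_n,Q_\rho(x_0))-F_{\e_n}(\w,\tilde u_n,A_\eta)$ by monotonicity. The upper bound in Assumption~\ref{a.1} and a product-rule estimate give
\begin{equation*}
F_{\e_n}(\w,\tilde u_n,A_\eta)\leq C\!\!\int_{A_\eta}\!\!\Big(|\nabla u_{\e_n,\delta}A(\w,\tfrac{x}{\e_n})|^p+|\nabla v_\delta(x_0)|^p|A(\w,\tfrac{x}{\e_n})|^p+(\eta\rho)^{-p}|A(\w,\tfrac{x}{\e_n})|^p|u_{\e_n,\delta}-\ell|^p+\Lambda(\w,\tfrac{x}{\e_n})\Big)\dx.
\end{equation*}
In the limit $n\to\infty$, the first summand yields $C\mu^\delta(\overline{A_\eta})$, which by the Lebesgue point property of $g_\delta$ and vanishing density of $(\mu^\delta)^s$ contributes $Cg_\delta(x_0)\rho^d(1-(1-\eta)^d)+o(\rho^d)$, while the second and fourth summands contribute $O(\rho^d(1-(1-\eta)^d))$ via the weak $L^1$-convergence of Lemma~\ref{l.weakL1}. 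The critical third summand is handled by combining the equi-integrability of $|A(\w,\cdot/\e_n)|^p$ from Lemma~\ref{l.weakL1} with the uniform $L^\infty$-bound on $u_{\e_n,\delta}-\ell$ and its in-measure convergence to $v_\delta-\ell$: a Dunford--Pettis argument (equi-integrable sequence times bounded sequence converging in measure) gives
\begin{equation*}
\int_{A_\eta}|A(\w,\tfrac{x}{\e_n})|^p|u_{\e_n,\delta}-\ell|^p\dx\to\mathbb{E}[|A(\cdot,0)|^p]\int_{A_\eta}|v_\delta-\ell|^p\dx,
\end{equation*}
and the approximate differentiability of $v_\delta$ at $x_0$ makes the resulting term $(\eta\rho)^{-p}\rho^{-d}\int_{A_\eta}|v_\delta-\ell|^p\dx$ vanish as $\rho\to 0$ for each fixed $\eta$. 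Collecting the estimates and sending first $n\to\infty$, then $\rho\to 0$ along admissible radii, and finally $\eta\to 0$ delivers $g_\delta(x_0)\geq f_{\rm hom}(\nabla v_\delta(x_0))$, which completes the argument.
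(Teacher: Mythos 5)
Your proposal is correct and shares the key ingredients with the paper's proof (the truncation of Lemma~\ref{l.truncation}, the weak $L^1$ ergodic theorem of Lemma~\ref{l.weakL1}, the blow-up via $\mu_\xi$, and the $L^p$-differentiability of Sobolev functions), but it differs in two places, both of which are genuine simplifications. First, you observe that the truncation level produced in the proof of Lemma~\ref{l.truncation} ranges over a finite set, so a further subsequence extraction makes it constant, $u_{\e_n,\delta}=\psi_{r^*(\delta)}\circ u_{\e_n}$, with the explicit limit $v_\delta=\psi_{r^*(\delta)}\circ u$ satisfying $|\nabla v_\delta|\leq|\nabla u|$ and $\nabla v_\delta\to\nabla u$ a.e.\ as $\delta\to 0$; this lets you remove the truncation by dominated convergence, using only the $p$-growth and continuity of $f_{\rm hom}$ from Lemma~\ref{l.existence_f_hom}. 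The paper instead only extracts a weak $W^{1,p}(D,\R^m)$ limit $u_\delta$ of the truncated sequence, deduces quasiconvexity of $f_{\rm hom}$ from the already-established $\Gamma$-limsup bound of Proposition~\ref{p.ub}, and then invokes weak $W^{1,p}$ lower semicontinuity; your route is more elementary and keeps the liminf inequality logically independent of the limsup one. Second, where the paper averages over $N$ nested cut-offs to produce a $1+C/N$ factor on the bulk energy, you use a single cut-off and bound the annular bulk contribution by a term of order $g_\delta(x_0)\big(1-(1-\eta)^d\big)$ via the Lebesgue point of $g_\delta$ and the zero density of $(\mu^\delta)^s$, sending $\eta\to 0$ last; both devices serve the same purpose and are equally effective. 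The treatment of the critical weighted term is identical in the two arguments: weak $L^1$ convergence of $|A(\w,\cdot/\e)|^p$ from Lemma~\ref{l.weakL1} paired, via \cite[Proposition~2.61]{FoLe}, with an $L^\infty$-bounded factor converging pointwise, followed by the $L^p$-differentiability estimate to absorb the $(\eta\rho)^{-p}$ prefactor as $\rho\to 0$.
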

\begin{proof}
Without loss of generality, we assume that the limit inferior is finite and, passing to a non-relabeled subsequence, it is actually a limit. By Lemma \ref{l.compactness} we deduce that $u\in W^{1,p}(D,\R^m)$. Define the absolutely continuous Radon-measure $\mu_{\e}$ on $D$ by
\begin{equation*}
\mu_{\e}(B)=\int_B f(\w,\tfrac{x}{\e},\nabla u_{\e}(x))\dx.
\end{equation*}
By our assumption, the sequence of measures $\mu_{\e}$ is equibounded, so that (up to passing to a further subsequence) $\mu_{\e}\overset{\star}{\rightharpoonup}\mu$ for some nonnegative finite Radon measure $\mu$ (possibly depending on $\w$). Using  Lebesgue's decomposition theorem, we can write $\mathrm{d}\mu= \widetilde{f}(x)\dx+ \nu$, with $\nu$ a nonnegative measure $\nu$ that is singular to the Lebesgue measure. Since $D$ is open, the weak$^*$ convergence implies that
\begin{equation*}
\liminf_{\e\to 0}F_{\e}(\w,u_{\e},D)=\liminf_{\e\to 0}\mu_{\e}(D)\geq \mu(D)\geq \int_{D}\widetilde{f}(x)\dx.
\end{equation*} 
Hence it suffices to show that $\widetilde{f}(x_0)\geq f_{\rm hom}(\nabla u(x_0))$ for a.e. $x_0\in D$. For $x\in D$ let $r_x>0$ be such that $Q_r(x)\subset D$ for all $0<r<r_x$. Since $\mu$ is a finite measure, it follows that $\mu(\partial Q_r(x))=0$ except for a countable number of radii $r\in (0,r_x)$. The Besicovitch differentiation theorem \cite[Theorem 1.153]{FoLe} and Portmanteau's theorem imply that for a.e. $x_0\in D$ we have (along a suitable sequence $r\to 0$)
\begin{equation*}
\widetilde{f}(x_0)=\lim_{r\to 0}\frac{\mu(Q_{r}(x_0))}{r^d}= \lim_{r\to 0}\lim_{\e\to 0}\frac{\mu_{\e}(Q_{r}(x_0))}{r^d}.
\end{equation*}
Therefore it suffices to prove that for a.e. $x_0\in D$ we have
\begin{equation}\label{eq:blowup}
\liminf_{r\to 0}\liminf_{\e\to 0}\dashint_{Q_r(x_0)}f_{\e}(\w,\tfrac{x}{\e},\nabla u_{\e}(x))\dx\geq f_{\rm hom}(\nabla u(x_0)).
\end{equation}
%Note that the above property even implies that
%\begin{equation}\label{eq:lb_onopen}
%	\liminf_{\e\to 0} F_{\e}(\w,u_{\e},U)\geq \int_U f_{\rm hom}(\nabla u(x))\dx\quad\text{ for all open sets }U\subset D,
%\end{equation}
%which we will use in Step 2.
In what follows, we let $x_0$ be a Lebesgue point of $u$ and $\nabla u$. To reduce notation, we define the linearization of $u$ at $x_0$ by $L_{u,x_0}(x)=u(x_0)+\nabla u(x_0)(x-x_0)$.

\vspace*{5mm}

\textbf{Step 1:} We prove \eqref{eq:blowup} under the additional assumption that there exists $C_0>0$ such that
\begin{equation}\label{eq:Lipschitz}
\sup_{0<\e<1}\|u_{\e}\|_{L^{\infty}(D)}\leq C_0,
\end{equation}
which also implies that $u\in L^{\infty}(D)$. We modify $u_{\e}$ close to $\partial Q_r(x_0)$: Given $0<\eta<1$ and $N\in\N$ we define for $0\leq i\leq N$ the numbers $\eta_i=1-\eta\tfrac{i}{N}$ and the cubes $Q_{\eta_i r}=Q_{\eta_i r}(x_0)$ (i.e., we drop $x_0$ from the notation). Note that $Q_{(1-\eta)r}\subset Q_{\eta_i r}\subset Q_r$. For $1\leq i\leq N$ we pick a cut-off function $\varphi_{i,\eta}\in C_c^{\infty}(\R^d,[0,1])$ such that $\varphi_{i,\eta}(x)=1$ on $Q_{\eta_i r}$ and ${\rm supp}(\varphi_{i,\eta})\subset Q_{\eta_{i-1}r}$, which can be chosen such that $\|\nabla\varphi_{i,\eta}\|_{\infty}\leq \tfrac{CN}{\eta r}$. Define then the function $u_{\e,\eta_i}:D\to\R^m$ by
\begin{equation*}
u_{\e,\eta_i}(x)=\varphi_{i,\eta}(x)u_{\e}(x)+(1-\varphi_{i,\eta}(x))L_{u,x_0}(x).
\end{equation*}
Since $u_{\e}\in W^{1,1}(D,\R^m)$ due to the global energy bound, it holds that $u_{\e,\eta_i}\in W^{1,1}(D,\R^m)$. By the product rule we have that
\begin{equation*}
\nabla u_{\e,\eta_i}(x)=\nabla\varphi_{i,\eta}(x)\otimes u_{\e}(x)+\varphi_{i,\eta}(x)\nabla u_{\e}(x)-\nabla\varphi_{i,\eta}(x)\otimes L_{u,x_0}(x)+(1-\varphi_{i,\eta}(x))\nabla u(x_0),
\end{equation*}
so that $0\leq \varphi_{i,\eta}\leq 1$ implies the estimate
\begin{align}
|\nabla u_{\e,\eta_i}(x)A(\w,\tfrac{x}{\e})|^p&\leq C\Big(|\nabla\varphi_{i,\eta}(x)|^p |u_{\e}(x)-L_{u,x_0}(x)|^p|A(\w,\tfrac{x}{\e})|^p\nonumber
\\
&\quad\quad+|\nabla u_{\e}(x)A(\w,\tfrac{x}{\e})|^p+|\nabla u(x_0)A(\w,\tfrac{x}{\e})|^p\Big).\label{eq:gradbound}
\end{align}
Since $u_{\e,\eta_i}=u_{\e}$ on $Q_{\eta_i r}$ and $u_{\e,\eta_i}=L_{u,x_0}$ on $\R^d\setminus Q_{\eta_{i-1}r}$, we can estimate the energy of $u_{\e,\eta_i}$ on $Q_r$ by
\begin{equation}\label{eq:split}
\frac{1}{r^d}F_{\e}(\w,u_{\e,\eta_i},Q_r)\leq \frac{1}{r^d}F_{\e}(\w,u_{\e},Q_r)+\frac{1}{r^d}F_{\e}(\w,u_{\e,\eta_i},Q_{\eta_{i-1}r}\setminus Q_{\eta_i r})+\frac{1}{r^d}F_{\e}(\w,L_{u,x_0},Q_{r}\setminus Q_{\eta_{i-1}r}).
\end{equation}
We argue that the last two terms are asymptotically negligible for a suitable choice of $i$. To reduce notation, we set $S_{i,\eta}^r=Q_{\eta_{i-1}r}\setminus Q_{\eta_i r}$. Using the bounds in Assumption~\ref{a.1} and \eqref{eq:gradbound} we have that
\begin{align*}
\frac{1}{r^d}F_{\e}(\w,u_{\e,\eta_i},S^r_{i,\eta})&\leq \frac{1}{r^d}\int_{S^r_{i,\eta}}|\nabla u_{\e,\eta_i}(x)A(\w,\tfrac{x}{\e})|^p+\Lambda(\w,\tfrac{x}{\e})\dx
\\
&\leq \frac{C}{r^d}\int_{S^r_{i,\eta}}(N/(\eta r))^{p} |u_{\e}(x)-L_{u,x_0}(x)|^p|A(\w,\tfrac{x}{\e})|^p+|\nabla u_{\e}(x)A(\w,\tfrac{x}{\e})|^p\dx
\\
&\quad+\frac{C}{r^d}\int_{S^r_{i,\eta}}|\nabla u(x_0)A(\w,\tfrac{x}{\e})|^p+\Lambda(\w,\tfrac{x}{\e})\dx
\\
&\leq \frac{C}{r^d} \left(\int_{S_{i,\eta}^r}(N/(\eta r))^{p} |u_{\e}(x)-L_{u,x_0}(x)|^p|A(\w,\tfrac{x}{\e})|^p\dx+F_{\e}(\w,u_{\e},S_{i,\eta}^r)\right)
\\
&\quad +\frac{C}{r^d}\int_{Q_r\setminus Q_{(1-\eta)r}}|\nabla u(x_0)|^p|A(\w,\tfrac{x}{\e})|^p+\Lambda(\w,\tfrac{x}{\e})\dx.
\end{align*}
To bound the last term in \eqref{eq:split}, we argue similarly and obtain that
\begin{equation*}
\frac{1}{r^d}F_{\e}(\w,L_{u,x_0},Q_{r}\setminus Q_{\eta_{i-1}r})\leq \frac{C}{r^d}\int_{Q_r\setminus Q_{(1-\eta) r}}|\nabla u(x_0)|^p|A(\w,\tfrac{x}{\e})|^p+\Lambda(\w,\tfrac{x}{\e})\dx.
\end{equation*}
Combining the last two estimates and summing \eqref{eq:split} over $1\leq i\leq N$, the fact that the sets $(S^r_{i,\eta})_{i=1}^N$ are pairwise disjoint and all contained in $Q_r\setminus Q_{(1-\eta)r}$ implies that
\begin{align*}
\frac{1}{N}\sum_{i=1}^N\frac{1}{r^d}F_{\e}(\w,u_{\e,\eta_i},Q_r)&\leq \frac{1}{r^d}F_{\e}(\w,u_{\e},Q_r)+\frac{C}{Nr^d} F_{\e}(\w,u_{\e},Q_r\setminus Q_{(1-\eta)r})
\\
&\quad+\frac{CN^{p-1}}{\eta^p r^{d+p}}\int_{Q_r}|u_{\e}(x)-L_{u,x_0}(x)|^p|A(\w,\tfrac{x}{\e})|^p\dx
\\
&\quad+\frac{C}{r^d}\int_{Q_r\setminus Q_{(1-\eta)r}}|\nabla u(x_0)|^p|A(\w,\tfrac{x}{\e})|^p+\Lambda(\w,\tfrac{x}{\e})\dx.
\end{align*}
Taking $i_*=i_{\e,\eta}\in\{1,\ldots,N\}$ such that $F_{\e}(\w,u_{\e,\eta_{i_*}},Q_r)\leq F_{\e}(\w,u_{\e,\eta_i},Q_r)$ for all $1\leq i\leq N$, the energy of the corresponding sequence is bounded by
\begin{align}\label{eq:an_estimate}
\frac{1}{r^d}F_{\e}(\w,u_{\e,\eta_{i_*}},Q_r)&\leq \left(1+\frac{C}{N}\right)\frac{1}{r^d}F_{\e}(\w,u_{\e},Q_r)+\frac{CN^{p-1}}{\eta^p r^{d+p}}\int_{Q_r}|u_{\e}(x)-L_{u,x_0}(x)|^p|A(\w,\tfrac{x}{\e})|^p\dx\nonumber
\\
&\quad + \frac{C}{r^d}\int_{Q_r\setminus Q_{(1-\eta)r}}|\nabla u(x_0)|^p|A(\w,\tfrac{x}{\e})|^p+\Lambda(\w,\tfrac{x}{\e})\dx.
\end{align}
We now pass to the limit in $\e$. Due to the construction it holds that $u_{\e,\eta_{i_*}}-u(x_0)+\nabla u(x_0)x_0\in \nabla u(x_0)x+W^{1,1}_0(Q_r,\R^m)$. Since the energy is invariant under the shift $u\mapsto u+a$ for any fixed $a\in\R^m$, by a change of variables from $Q_r$ to $Q_r/\e$ we conclude that
\begin{equation*}
	\frac{1}{r^d}F_{\e}(\w,u_{\e,\eta_{i_*}},Q_r)\geq \frac{1}{|Q_r/\e|}\mu_{\nabla u(x_0)}(\w,Q_r/\e).
\end{equation*}
For the last integral in \eqref{eq:an_estimate} we can use the ergodic theorem \ref{thm.additiv_ergodic}. The other integral in \eqref{eq:an_estimate} is the nontrivial term in the case of degenerate growth conditions. Since we assume that $u_{\e}$ is bounded in $L^{\infty}(D)$ and converges in $L^1(D)$ to $u$, (up to a subsequence) we can assume that $|u_{\e}(x)-L_{u,x_0}(x)|^p$ converges a.e. to $|u(x)-L_{u,x_0}(x)|^p$ and is uniformly bounded. Moreover, we know from Lemma \ref{l.weakL1} that $|A(\w,\tfrac{\cdot}{\e})|^p$ converges weakly in $L^1(D)$ to $\mathbb{E}[|A(\cdot,0)|^p]$. By \cite[Proposition 2.61]{FoLe} the product thus converges weakly in $L^1(D)$ to the product of the limits. Hence by Lemma \ref{l.existence_f_hom}
\begin{align*}
\frac{N}{N+C}f_{\rm hom}(\nabla u(x_0))&\leq \liminf_{\e\to 0}\frac{1}{r^d}F_{\e}(\w,u_{\e},Q_r)+C\,\mathbb{E}[|A(\cdot,0)|^p]\frac{N^{p-1}}{\eta^p r^{d+p}}\int_{Q_r}|u(x)-L_{u,x_0}(x)|^p\dx
\\
&\quad+C\left(\mathbb{E}[|A(\cdot,0)|^p]|\nabla u(x_0)|^p+\mathbb{E}[\Lambda(\cdot,0)]\right)(1-(1-\eta)^d).
\end{align*}
Our construction allows us to consider the following order of limits: first we let $r\to 0$. Since $u\in W^{1,p}(D,\R^m)$, the $L^p$-differentiability of Sobolev functions (cf. \cite[Theorem 2, p. 230]{EvGa}) yields that
\begin{equation*}
\lim_{r\to 0}\frac{1}{r^p}\dashint_{Q_r}|u(x)-L_{u,x_0}(x)|^p\dx=0.	
\end{equation*}
In a second step we let $\eta\to 0$ and $N\to +\infty$ and conclude that
\begin{equation*}
f_{\rm hom}(\nabla u(x_0))\leq \liminf_{r\to 0}\liminf_{\e\to 0}\frac{1}{r^d}F_{\e}(\w,u_{\e},Q_r),
\end{equation*}
which coincides with \eqref{eq:blowup}.

\vspace*{5mm}

\textbf{Step 2:} Now consider a general sequence $u_{\e}\in W^{1,1}(D,\R^m)$ such that $u_{\e}\to u$ in $L^1(D,\R^m)$ and with equibounded energy $F_{\e}(\w,u_{\e},D)$. Instead of proving \eqref{eq:blowup}, we directly show the lower bound using Lemma \ref{l.truncation}. Given $\delta>0$, let $u_{\e,\delta}\in W^{1,1}(D,\R^m)$ be the function given by Lemma \ref{l.truncation}, so that
\begin{equation*}
\liminf_{\e\to 0}F_{\e}(\w,u_{\e},D)\geq \frac{1}{1+\delta}\liminf_{\e\to 0}F_{\e}(\w,u_{\e,\delta},D)-\delta.
\end{equation*}
In particular, by Lemma \ref{l.compactness} it follows that (up to a subsequence) $u_{\e,\delta}\rightharpoonup u_{\delta}$ in $W^{1,1}(D,\R^m)$ with $u_{\delta}\in W^{1,p}(D,\R^m)$. From Step 1 and the uniform boundedness of $u_{\e,\delta}$ we infer that
\begin{equation*}
	\liminf_{\e\to 0}F_{\e}(\w,u_{\e},D)\geq \frac{1}{1+\delta}\int_Df_{\rm hom}(\nabla u_{\delta}(x))\dx-\delta.
\end{equation*} 
Hence it suffices to show that
\begin{equation}\label{eq:lsc}
	\liminf_{\delta\to 0}\int_D f_{\rm hom}(\nabla u_{\delta}(x))\dx\geq \int_D f_{\rm hom}(\nabla u(x))\dx.
\end{equation}
Since $u_{\e,\delta}=u_{\e}$ on $\{|u_{\e}|\leq\delta^{-1}\}$, it follows that $u_{\delta}=u$ a.e. on $\{|u|\leq \delta^{-1}\}$. Moreover, it is a consequence of Lemma \ref{l.truncation} that $|u_{\delta}(x)|\leq |u(x)|$ a.e. on $D$. Dominated convergence thus implies that $u_{\delta}\to u$ in $L^1(D,\R^m)$, while $\nabla u_{\delta}$ is bounded in $L^p(D,\R^{m\times d})$ due to the $p$-growth from below of $f_{\rm hom}$ (cf. Lemma \ref{l.existence_f_hom}). Hence $u_{\delta}\rightharpoonup u$ in $W^{1,p}(D,\R^m)$. Finally, note that by what we have proved we already know that $u_{\delta}=u$ for $\delta$ small enough if $u\in L^{\infty}(D,\R^m)$. Hence, we have identified the $\Gamma$-limit on $W^{1,p}(D,\R^m)\cap L^{\infty}(D,\R^m)$. Since $f_{\rm hom}$ has $p$-growth and the $\Gamma$-limit has to be lower semicontinuous on $L^1(D)$, it follows by standard results (\cite[Theorem 8.4]{Da}) that $f_{\rm hom}$ is quasiconvex. Hence the integral functional 
\begin{equation*}
	u\mapsto \int_D f_{\rm hom}(\nabla u(x))\dx
\end{equation*}
is lower-semicontinuous with respect to weak convergence in $W^{1,p}(D,\R^m)$ (see \cite[Theorem 8.11]{Da}) and therefore we obtain \eqref{eq:lsc} which concludes the proof.
\end{proof}
\begin{proof}[Proof of Theorem \ref{thm.Gamma_pure}]
Lemma \ref{l.compactness} shows that the domain of the $\Gamma$-limit is $W^{1,p}(D,\R^m)$. Propositions \ref{p.ub} and \ref{p.lb} yield the $\Gamma$-convergence statement, while the properties of $f_{\rm hom}$ are proven in Lemma~\ref{l.existence_f_hom}. 	
\end{proof}
\subsection{Proof with boundary data}
In this section we fix a boundary condition $g\in W_{\rm loc}^{1,\infty}(\R^d,\R^m)$ and prove a compactness statement and the corresponding lower and upper bounds when the functionals are restricted to maps $u_{\e}$ satisfying the boundary condition $u_{\e}=g$ on $\partial D$ in the sense of traces. The case of external forces is postponed to the next section.
\begin{lemma}\label{l.boundarycompactness}
Let $u_{\e}\in g+W^{1,1}_0(D,\R^m)$ be such that
\begin{equation*}
	\sup_{\e\in (0,1)}F_{\e}(\w,u_{\e},D)<+\infty.
\end{equation*}
Then there exists $u\in g+W_0^{1,p}(D,\R^m)$ such that up to a subsequence $u_{\e}\rightharpoonup u$ in $W^{1,1}(D,\R^m)$ and $u_{\e}\to u$ in $L^{\frac{d}{d-1}}(D,\R^m)$.
\end{lemma}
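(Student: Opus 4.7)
The plan is to first upgrade the equi-boundedness of the energy to weak compactness in $W^{1,1}(D,\R^m)$ using Lemma \ref{l.compactness}, then identify the limit as an element of $g+W^{1,p}_0(D,\R^m)$ by invoking the weak closedness of $W^{1,1}_0$ in $W^{1,1}$, and finally upgrade the weak $W^{1,1}$ convergence to strong $L^{d/(d-1)}$ convergence by applying the complete continuity of the critical Sobolev embedding (Theorem \ref{thm.embedding}).

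First I would observe that, since $u_\e - g \in W^{1,1}_0(D,\R^m)$ and $g \in W^{1,\infty}_{\rm loc}(\R^d,\R^m)$ restricts to an element of $W^{1,\infty}(D,\R^m)$ (as $D$ is bounded with Lipschitz boundary), Poincar\'e's inequality for zero-trace functions yields
\begin{equation*}
\|u_\e - g\|_{L^1(D)} \leq C\|\nabla u_\e - \nabla g\|_{L^1(D)} \leq C\bigl(\|\nabla u_\e\|_{L^1(D)} + |D|\,\|\nabla g\|_{L^\infty(D)}\bigr).
\end{equation*}
The gradient bound $\|\nabla u_\e\|_{L^1(D)}\leq C$ coming from the proof of Lemma \ref{l.compactness} (using the energy bound together with $|A(\w,\cdot/\e)^{-1}|^{p/(p-1)}$ being weakly $L^1$-convergent by Lemma \ref{l.weakL1}) then gives a uniform $L^1$-bound for $u_\e$. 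Hence Lemma \ref{l.compactness} applies and, along a subsequence, $u_\e \rightharpoonup u$ weakly in $W^{1,1}(D,\R^m)$ for some $u \in W^{1,p}(D,\R^m)$.

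The next step is to verify $u \in g + W^{1,p}_0(D,\R^m)$. Since $W^{1,1}_0(D,\R^m)$ is a closed linear subspace of $W^{1,1}(D,\R^m)$, it is closed under weak convergence; as $u_\e - g \rightharpoonup u - g$ weakly in $W^{1,1}(D,\R^m)$, we deduce $u - g \in W^{1,1}_0(D,\R^m)$. Because both $u$ and $g|_D$ belong to $W^{1,p}(D,\R^m)$, we also have $u - g \in W^{1,p}(D,\R^m)$. The standard trace characterization on Lipschitz domains gives $W^{1,1}_0(D,\R^m) \cap W^{1,p}(D,\R^m) = W^{1,p}_0(D,\R^m)$, so $u - g \in W^{1,p}_0(D,\R^m)$, as required.

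The last step is to promote the weak $W^{1,1}$-convergence $u_\e \rightharpoonup u$ to strong convergence in $L^{d/(d-1)}(D,\R^m)$. This is exactly the critical Sobolev exponent, where the classical Rellich--Kondrachov theorem fails to give compactness; the main obstacle is therefore to rule out concentration. This is precisely the content of Theorem \ref{thm.embedding}, which asserts that the embedding $W^{1,1}(D) \hookrightarrow L^{d/(d-1)}(D)$, although not compact, is completely continuous. Applying that theorem componentwise to the weakly convergent sequence yields $u_\e \to u$ strongly in $L^{d/(d-1)}(D,\R^m)$ and concludes the proof.
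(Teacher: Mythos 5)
Your proof is correct and takes essentially the same route as the paper: Poincar\'e plus Lemma \ref{l.compactness} for weak $W^{1,1}$ compactness with $u\in W^{1,p}$, and Theorem \ref{thm.embedding} for the $L^{d/(d-1)}$ convergence. The only (cosmetic) difference is in the identification $u-g\in W_0^{1,p}(D,\R^m)$: you argue via weak closedness of the subspace $W^{1,1}_0$ together with the equality $W^{1,1}_0\cap W^{1,p}=W^{1,p}_0$ on Lipschitz domains, whereas the paper invokes weak continuity of the trace operator $W^{1,1}(D)\to L^1(\partial D)$ to conclude that the trace of $u$ is $g$; both reduce to the same trace characterization.
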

\begin{proof}
Due to Lemma \ref{l.compactness} and Theorem \ref{thm.embedding}, the convergence follows once we show that $u_{\e}$ is bounded in $L^1(D,\R^m)$. This is a consequence of Poincar\'e's inequality since by Lemma \ref{l.compactness} $\nabla u_{\e}$ is bounded in $L^1(D,\R^{m\times d})$. It remains to prove that $u=g$ on $\partial D$. Since $u_{\e}\rightharpoonup u$ in $W^{1,1}(D,\R^m)$, the trace of $u_{\e}$ converges weakly in $L^1(\partial\Omega)$ to the trace of $u$. Hence $u=g$ on $\partial D$ in the sense of traces and therefore $u\in g+W_0^{1,p}(D,\R^m)$.
\end{proof}
Next we prove the lower and upper bound with active boundary conditions.
\begin{proposition}\label{p.lb_contrained}
Let $u_{\e}\in g+W_0^{1,1}(D,\R^m)$ be such that $u_{\e}\to u$ in $L^1(D,\R^m)$ and moreover $\liminf_{\e\to 0}F_{\e}(\w,u_{\e},D)<+\infty$. Then $u\in g+W_0^{1,p}(D,\R^m)$ and
\begin{equation*}
	\int_D f_{\rm hom}(\nabla u(x))\dx\leq \liminf_{\e\to 0}F_{\e}(\w,u_{\e},D).
\end{equation*} 
\end{proposition}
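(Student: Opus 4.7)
The plan is to combine the unconstrained $\Gamma$-liminf inequality (Proposition \ref{p.lb}) with a trace identification argument. Passing to a (non-relabeled) subsequence, I may assume that $\liminf_{\e\to 0}F_\e(\w,u_\e,D)$ is actually a finite limit. Proposition \ref{p.lb} applied directly then gives both the desired lower bound
\begin{equation*}
\int_D f_{\rm hom}(\nabla u(x))\dx\leq \liminf_{\e\to 0}F_\e(\w,u_\e,D)
\end{equation*}
and the information that $u\in W^{1,p}(D,\R^m)$. So the only thing that still needs attention is the boundary condition $u=g$ on $\partial D$.

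For that, I would argue exactly as in Lemma \ref{l.boundarycompactness}. The energy bound together with the H\"older estimate from Lemma \ref{l.compactness} (using $|A(\w,\cdot/\e)^{-1}|^{p/(p-1)}\in L^1(D)$ by Lemma \ref{l.weakL1}) yields that $\nabla u_\e$ is bounded in $L^1(D,\R^{m\times d})$. Since $u_\e\to u$ in $L^1$, the sequence is bounded in $W^{1,1}(D,\R^m)$, so by the equi-integrability part of Lemma \ref{l.compactness} we may assume (up to a further subsequence) that $u_\e\rightharpoonup u$ in $W^{1,1}(D,\R^m)$. Continuity of the trace operator $W^{1,1}(D,\R^m)\to L^1(\partial D,\R^m)$ with respect to weak convergence then forces the traces of $u_\e$ to converge weakly in $L^1(\partial D,\R^m)$ to the trace of $u$; since each $u_\e$ has trace $g$ on $\partial D$, we conclude $u=g$ on $\partial D$, and therefore $u\in g+W^{1,p}_0(D,\R^m)$.

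No serious obstacle arises here: the delicate parts (compactness in $W^{1,1}$ against the degenerate weight, the blow-up argument via truncation, and the treatment of the weak $L^1$-convergence of $|A(\w,\cdot/\e)|^p$) have all been taken care of in the unconstrained setting. The Dirichlet condition is simply preserved in the limit through the standard trace argument, and the $\Gamma$-liminf inequality is inherited verbatim from Proposition \ref{p.lb} since constraining the class of admissible sequences can only increase the $\Gamma$-lower limit.
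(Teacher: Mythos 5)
Your proposal is correct and follows essentially the same route as the paper: invoke Proposition~\ref{p.lb} for the energy inequality and the $W^{1,p}$ regularity, and then identify the boundary trace as in Lemma~\ref{l.boundarycompactness} using that the trace operator $W^{1,1}(D,\R^m)\to L^1(\partial D,\R^m)$, being bounded and linear, maps the weakly convergent sequence $u_\e\rightharpoonup u$ to a weakly convergent sequence of traces, all equal to $g$. The paper's own proof is just a terser version of the same argument, referencing Lemma~\ref{l.compactness} and Proposition~\ref{p.lb} with the trace identification implicit via Lemma~\ref{l.boundarycompactness}.
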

\begin{proof}
By Lemma \ref{l.compactness} we know that $u_{\e}\rightharpoonup u$ in $W^{1,1}(D,\R^m)$ and $u\in g+W_0^{1,p}(D,\R^m)$. The lower bound is a consequence of Proposition \ref{p.lb}.
\end{proof}
\begin{proposition}\label{p.ub_constrained}
Let $u\in g+W_0^{1,p}(D,\R^m)$. Then there exists $u_{\e}\in g+W_0^{1,1}(D,\R^m)$ such that $u_{\e}\to u$ in $L^1(D,\R^m)$ and
\begin{equation*}
\limsup_{\e\to 0}F_{\e}(\w,u_{\e},D)\leq\int_D f_{\rm hom}(\nabla u(x))\dx.
\end{equation*}
\end{proposition}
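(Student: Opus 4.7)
The plan is to modify the unconstrained recovery sequence from Proposition \ref{p.ub} near $\partial D$ via a cut-off that restores the boundary value $g$, using a de Giorgi-type averaging over many shells to absorb the Lipschitz cost of the cut-off. Since $f_{\rm hom}$ satisfies $p$-growth by Lemma \ref{l.existence_f_hom}, the functional $v\mapsto\int_D f_{\rm hom}(\nabla v)\dx$ is continuous for strong $W^{1,p}$-convergence, so a standard density argument ($C_c^\infty(D,\R^m)$ being dense in $W_0^{1,p}(D,\R^m)$) together with a diagonal argument reduces the problem to the case $u-g\in C_c^\infty(D,\R^m)$. In particular $u\equiv g$ outside some compact set $K\subset\subset D$. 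Fix $\delta_0>0$ so small that $K\subset D^-_{2\delta_0}:=\{x\in D:\dist(x,\partial D)>2\delta_0\}$ and restrict attention to $\delta\in(0,\delta_0]$.

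First I would apply Proposition \ref{p.ub} to obtain $\tilde u_\e\to u$ in $L^1(D,\R^m)$ with $\limsup_\e F_\e(\w,\tilde u_\e,D)\le\int_D f_{\rm hom}(\nabla u)\dx$, and then invoke Lemma \ref{l.truncation} (plus a further diagonal step in its truncation parameter) to additionally arrange that $\tilde u_\e$ is uniformly bounded in $L^\infty(D,\R^m)$. For $N\in\N$ I would introduce the nested shells $A_i=D^-_{\delta+(i-1)\delta/N}\setminus D^-_{\delta+i\delta/N}$, $i=1,\ldots,N$, all contained in $S:=D^-_\delta\setminus D^-_{2\delta}$, and cut-offs $\varphi_i\in C_c^\infty(D,[0,1])$ with $\varphi_i\equiv 1$ on $D^-_{\delta+i\delta/N}$, $\mathrm{supp}\,\varphi_i\subset D^-_{\delta+(i-1)\delta/N}$, and $\|\nabla\varphi_i\|_\infty\le CN/\delta$. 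Setting $u_\e^{(i)}:=\varphi_i\tilde u_\e+(1-\varphi_i)g\in g+W_0^{1,1}(D,\R^m)$, the product rule together with the upper bound in Assumption \ref{a.1}, a shell-by-shell estimate, and the choice of $i_*=i_{*,\e}$ minimizing $F_\e(\w,u_\e^{(i)},A_i)$ over $i$ (using disjointness of the $A_i$), yields
\begin{equation*}
F_\e(\w,u_\e^{(i_*)},D)\le F_\e(\w,\tilde u_\e,D)+F_\e(\w,g,D\setminus D^-_\delta)+\frac{C}{N}\Bigl[F_\e(\w,\tilde u_\e,S)+\int_S|\nabla g|^p|A(\w,\tfrac{x}{\e})|^p+\Lambda(\w,\tfrac{x}{\e})\dx\Bigr]+\frac{CN^{p-1}}{\delta^p}\int_S|\tilde u_\e-g|^p|A(\w,\tfrac{x}{\e})|^p\dx.
\end{equation*}

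The hard part is the last (cross) term, whose $N^{p-1}/\delta^p$ prefactor must be killed before letting $N\to+\infty$. Here the restriction $\delta\le\delta_0$ is crucial: since $K\subset D^-_{2\delta}$, we have $u\equiv g$ on $S$, so $\tilde u_\e-g\to 0$ in $L^1(S)$ and, together with the uniform $L^\infty$-bound on $\tilde u_\e$, along a subsequence $|\tilde u_\e-g|^p\to 0$ a.e.\ on $S$ while remaining uniformly bounded. By Lemma \ref{l.weakL1} and the Dunford--Pettis criterion, $|A(\w,\cdot/\e)|^p$ is equi-integrable in $L^1(D)$; hence so is $|\tilde u_\e-g|^p|A(\w,\cdot/\e)|^p$, and Vitali's convergence theorem forces the cross-term integral to tend to $0$ as $\e\to 0$ for $N,\delta$ fixed. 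The remaining $g$-contributions are handled by Lemma \ref{l.weakL1} applied to $|A|^p$ and $\Lambda$ tested against $|\nabla g|^p\in L^\infty$, and vanish as $|S|\to 0$. Sending $\e\to 0$, then $N\to+\infty$, then $\delta\to 0$, and extracting a diagonal subsequence produces the desired $u_\e\in g+W_0^{1,1}(D,\R^m)$; the $L^1$-convergence $u_\e\to u$ is immediate since $u\equiv g$ on $S$ and $\tilde u_\e\to u$ on $D^-_{2\delta}$.
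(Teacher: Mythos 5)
Your proposal is correct and follows essentially the same route as the paper: reduce to $u-g\in C_c^\infty(D,\R^m)$ by density, start from the unconstrained recovery sequence of Proposition~\ref{p.ub}, truncate via Lemma~\ref{l.truncation}, adjust the boundary value by a convex combination with $g$ across $N$ nested shells, choose a good shell index by averaging, kill the cross term using the $L^\infty$-bound on the truncated sequence together with the equi-integrability of $|A(\w,\cdot/\e)|^p$ from Lemma~\ref{l.weakL1}, and finally send $\e\to 0$, $N\to+\infty$, $\delta\to 0$ with a diagonal extraction. The only cosmetic difference is that you invoke Vitali's theorem where the paper cites \cite[Proposition 2.61]{FoLe}; in this setting (uniformly bounded factor converging in measure multiplied against an equi-integrable sequence converging weakly in $L^1$) the two are interchangeable, and the paper additionally spells out the small point you gloss over, namely that the truncation level $\delta^{-1}$ must exceed $\|u\|_\infty+\|g\|_\infty$ so the truncation acts as the identity on the boundary layer where $u=g$.
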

\begin{proof}
By density it is enough to prove the claim when $u-g\in C_c^{\infty}(D,\R^m)$. Due to Proposition \ref{p.ub} we find a sequence $u_{\e}\in W^{1,1}(D,\R^m)$ such that $u_{\e}\to u$ in $L^1(D,\R^m)$ and
\begin{equation}\label{eq:recoverysequence}
\limsup_{\e\to 0}F_{\e}(\w,u_{\e},D)\leq \int_D f_{\rm hom}(\nabla u(x))\dx.	
\end{equation}
We modify $u_{\e}$ near $\partial D$ such that it belongs to $ g+W_0^{1,1}(D,\R^m)$ with a negligible increase in energy. This will be achieved by certain convex combinations as in the proof of Proposition \ref{p.lb}. Hence we need again to truncate the sequence $u_{\e}$. Using Lemma \ref{l.truncation}, for every $\delta>0$ there exists $C_{\delta}>0$ and a sequence $u_{\e,\delta}\in W^{1,1}(D,\R^m)$ with the following properties:
\begin{align}
 &\|u_{\e,\delta}\|_{L^{\infty}(D)}\leq C_{\delta},\label{eq:bound}
\\
&u_{\e,\delta}=u_{\e}\quad \text{ a.e. on }\{|u_{\e}|\leq \delta^{-1}\},\label{eq:equal}
\\
&|u_{\e,\delta}(x)|\leq |u_{\e}(x)|,\label{eq:smaller}
\\
&\limsup_{\e\to 0}F_{\e}(\w,u_{\e,\delta},D)\leq(1+\delta)\limsup_{\e}F_{\e}(\w,u_{\e},D)+\delta.\label{eq:energy}
\end{align}
Choosing $\delta$ sufficiently small, we can additionally assume that 
\begin{equation}\label{eq:dominateg}
	\|u\|_{L^{\infty}(D)}+\|g\|_{L^{\infty}(D)}< \delta^{-1}.
\end{equation}
Next, let $\eta>0$ be such that $\{x\in D:\dist(x,\partial D)\leq \eta\}\subset \{u=g\}$. Given $N\in\N$  and $0\leq i\leq N$ we define the sets
\begin{equation*}
D_i=\left\{x\in D:\,\dist(x,\partial D)>\frac{i}{N}\eta\right\}
\end{equation*}
and for $1\leq i\leq N$ we choose a cut-off function $\varphi_{i}\in C_c^{\infty}(\R^d,[0,1])$ such that $\varphi_i\equiv 1$ on $D_i$, ${\rm supp}(\varphi_i)\subset D_{i-1}$, and $\|\nabla\varphi_i\|_{L^{\infty}(\R^d)}\leq \tfrac{CN}{\eta}$. We then define the interpolation between $u_{\e,\delta}$ and $g$ by
\begin{equation}\label{eq:convexcombo}
u_{\e,\delta,i}=\varphi_i u_{\e,\delta}+(1-\varphi_i)g\in g+W_0^{1,1}(D,\R^m).
\end{equation}
In order to estimate its energy we can argue as for \eqref{eq:split} and the subsequent estimates to obtain
\begin{align*}
F_{\e}(\w,u_{\e,\delta,i},D)&\leq F_{\e}(\w,u_{\e,\delta},D)+F_{\e}(\w,u_{\e,\delta,i},D_{i-1}\setminus D_i)+F_{\e}(\w,g,D\setminus D_{i-1})
\\
&\leq F_{\e}(\w,u_{\e,\delta},D)+C F_{\e}(\w,u_{\e,\delta},D_{i-1}\setminus D_i)+(1+C)F_{\e}(\w,g,D\setminus D_N)
\\
&\quad +C\int_{D_{i-1}\setminus D_i}(N/\eta)^p |u_{\e,\delta}(x)-g(x)|^p|A(\w,\tfrac{x}{\e})|^p+\Lambda(\w,\tfrac{x}{\e})\dx.
\end{align*}
The sets $\{D_{i-1}\setminus D_i\}_{i=1}^N$ are pairwise disjoint and contained in $D\setminus D_N$. Hence, similar to \eqref{eq:an_estimate}, for every $\e>0$ there exists $1\leq i_{\e}\leq N$ such that, setting $u^*_{\e,\delta}=u_{\e,\delta,i_{\e}}$, it holds that
\begin{align}\label{eq:averaging_bc}
F_{\e}(\w,u_{\e,\delta}^*,D)&\leq \left(1+\frac{C}{N}\right)F_{\e}(\w,u_{\e,\delta},D)+(1+C)F_{\e}(\w,g,D\setminus D_N)\nonumber
\\
&\quad+\frac{C}{N}\int_{D\setminus D_N}(N/\eta)^p |u_{\e,\delta}(x)-g(x)|^p|A(\w,\tfrac{x}{\e})|^p+\Lambda(\w,\tfrac{x}{\e})\dx.
\end{align}
To pass to the limit in $\e$, we need to bound the last two terms. Since $g\in W^{1,\infty}(D,\R^m)$, we conclude that
\begin{equation*}
F_{\e}(\w,g,D\setminus D_N)\leq (\|\nabla g\|_{L^{\infty}(D)}+1)\int_{D\setminus D_N}|A(\w,\tfrac{x}{\e})|^p+\Lambda(\w,\tfrac{x}{\e})\dx.
\end{equation*}
The definition of $D_N$ yields that $D\setminus D_N\subset \{x\in D:\,\dist(x,\partial D)\leq \eta\}$. The measure of the latter set vanishes as $\eta\to 0$. Lemma \ref{l.weakL1} on the $L^1$-weak convergence in the ergodic theorem then implies that for $\eta=\eta(\delta)$ small enough, it holds that
\begin{equation}\label{eq:control_g}
\limsup_{\e\to 0}(1+C)F_{\e}(\w,g,D\setminus D_N)+\frac{C}{N}\int_{D\setminus D_N}\Lambda(\w,\tfrac{x}{\e})\dx\leq\delta.	
\end{equation}
In order to bound the last term in \eqref{eq:averaging_bc}, we note that due to \eqref{eq:bound} the function $x\mapsto |u_{\e,\delta}(x)-g(x)|^p$ is uniformly bounded as $\e\to 0$. Moreover, for a.e. $x\in D\setminus D_N$, due to \eqref{eq:dominateg} we have along a subsequence
\begin{equation*}
	g(x)=u(x)=\lim_{\e_j\to 0}u_{\e_j}(x)\overset{\eqref{eq:equal}}{=}\lim_{\e_j\to 0}u_{\e_j,\delta}(x),
\end{equation*} 
so that we can apply \cite[Proposition 2.61]{FoLe} and deduce that
\begin{equation*}
	\lim_{\e_j\to 0}\int_{D\setminus D_N}|u_{\e_j,\delta}(x)-g(x)|^p|A(\w,\tfrac{x}{\e_j})|^p\dx=0.
\end{equation*}
The limit is independent of the subsequence, so this convergence is valid along any sequence $\e\to 0$. Combined with \eqref{eq:energy}, \eqref{eq:recoverysequence}, and \eqref{eq:control_g}, the estimate \eqref{eq:averaging_bc} with $C/N\leq\delta$ yields
\begin{equation}\label{eq:BC_limsup}
\limsup_{\e\to 0} F_{\e}(\w,u_{\e,\delta}^*,D)\leq (1+\delta)^2\int_D f_{\rm hom}(\nabla u(x))\dx+2\delta.
\end{equation}
We finally argue that $u_{\e,\delta}^*\to u$ in $L^1(D,\R^m)$. Due to \eqref{eq:dominateg} and \eqref{eq:equal}, we know that along a subsequence, for a.e. $x\in D$, it holds that
\begin{equation*}
u(x)=\lim_{\e_j\to 0}u_{\e_j}(x)=\lim_{\e_j\to 0}u_{\e_j,\delta}(x).
\end{equation*}
Since 
\begin{equation*}
u_{\e_j,\delta}^*(x)=\varphi_{i_{\e}}(x)u_{\e_j,\delta}(x)+(1-\varphi_{i_{\e}}(x))g(x)=
\begin{cases} 
	u_{\e_j,\delta}(x)&\mbox{on $D_N$},
	\\
	\varphi_{i_{\e}}(x)u_{\e_j,\delta}(x)+(1-\varphi_{i_{\e}}(x))u(x) &\mbox{on $D\setminus D_{N}$,}
\end{cases}
\end{equation*}
it follows that along the same subsequence we have $u_{\e_j,\delta}^*(x)\to u(x)$. Due to \eqref{eq:smaller} we can apply Lebesgue's dominated convergence theorem to deduce that (now along the whole sequence) $u_{\e,\delta}^*\to u$ in $L^1(D,\R^m)$. Since $\delta>0$ was arbitrary, a diagonal argument in \eqref{eq:BC_limsup} proves the claim.
\end{proof}
\subsection{Proof with boundary data and external forces}\label{s.bd+forces}
We add the additional force term to the energy. As $\Gamma$-convergence is stable under continuously converging perturbations, the proof comes almost for free. 
\begin{proof}[Proof of Theorem \ref{thm:Dirichlet_and_forces}]
We first show the compactness statement. Repeating the estimate \eqref{eq:energy_estimate}, we have that
\begin{equation*}
	\left(|D|\dashint_{D/\e}|A(\w,y)^{-1}|^{\frac{p}{p-1}}\dy\right)^{1-p}\left(\int_{D}|\nabla u_{\e}(x)|\dx\right)^p\leq C F_{\e}(\w,u_{\e},D).
\end{equation*}
Applying the ergodic theorem \ref{thm.additiv_ergodic} to the first integral on the left-hand side, we deduce that there exists a constant $C>0$ such that for $\e>0$ small enough we have
\begin{equation}\label{eq:linearlb}
\frac{1}{C}\,\|\nabla u_{\e}\|_{L^1(D)}^p\leq F_{\e}(\w,u_{\e},D).
\end{equation}
Set $q=\frac{p}{p-1}$ as the dual exponent of $p$. From H\"older's inequality, the Sobolev embedding, Young's inequality, and Poincar\'e's inequality in $W^{1,1}_0(D,\R^m)$, we deduce that for any $\delta>0$ it holds that
\begin{align*}
\left|\int_D f_{\e}(x)\cdot u_{\e}(x)\dx\right|&\leq \|f_{\e}\|_{L^{d}(D)}\|u_{\e}\|_{L^{d/(d-1)}(D)}\leq C \|f_{\e}\|_{L^d(D)}\|u_{\e}\|_{W^{1,1}(D)}
\\
&\leq \frac{C\delta^{-q}}{ q}\|f_{\e}\|^{q}_{L^d(D)}+\frac{C\delta^p}{p}\|u_{\e}\|^p_{W^{1,1}(D)}
\\
&\leq \frac{C\delta^{-q}}{q}\|f_{\e}\|^q_{L^d(D)}+\frac{C\delta^p}{p}(\|\nabla u_{\e}\|^p_{L^1(D)}+\|g\|^p_{W^{1,1}(D)}).
\end{align*}
From \eqref{eq:linearlb} we conclude that for $\delta$ small enough 
\begin{equation*}
\left|\int_D f_{\e}(x)\cdot u_{\e}(x)\dx\right|\leq C_{\delta,p}\|f_{\e}\|_{L^d(D)}^q+\frac{1}{2}F_{\e}(\w,u_{\e},D)+\|g\|_{W^{1,1}(D)}^p.
\end{equation*}
In particular, since $f_{\e}$ is bounded in $L^d(D,\R^m)$, a bound of the form
\begin{equation*}
	\limsup_{\e\to 0} \left(F_{\e}(\w,u_{\e},D)-\int_D f_{\e}(x)\cdot u_{\e}(x)\dx\right)<+\infty
\end{equation*}
implies that $F_{\e}(\w,u_{\e},D)$ is bounded as $\e\to 0$ and therefore the compactness statement follows from Lemma \ref{l.boundarycompactness}. Since $u_{\e}\to u$ in $L^{d/(d-1)}(D,\R^m)$ implies that
\begin{equation*}
	\lim_{\e\to 0}\int_D f_{\e}(x)\cdot u_{\e}(x)\dx=\int_D f_0(x)\cdot u(x)\dx,
\end{equation*}
the $\Gamma$-convergence follows from the result without external forces (see Propositions \ref{p.lb_contrained} and \ref{p.ub_constrained}).
\end{proof}

\subsection{Proof for the obstacle problem}
The last constraint we treat is the inequality $u\geq \varphi_{\e}$.
\begin{proof}[Proof of Theorem \ref{thm.obstacle}]
To reduce notation, we just consider the scalar case $m=1$, but the same arguments can be applied for every component. Moreover, since $g$ enters the problem only with its values on $\partial D$, we can replace it by another $W^{1,\infty}(D)$-function that has the same trace. We construct such a function $\tilde{g}$ satisfying $\tilde{g}\geq\varphi_{\e}$ on $\overline{D}$ for all $\e$ small enough, which turns out to be convenient for the proof. Note that such $\tilde{g}$ also satisfies $\tilde{g}\geq\varphi$ in $\overline{D}$ since $\varphi_{\e}\to\varphi$ uniformly on $\overline{D}$. To construct $\widetilde{g}$, we fix a large constant $c_g>0$ and set $\phi(x)=\min\{1,\dist(x,\partial D)\}$ and
\begin{equation*}
\tilde{g}(x)=c_g\phi(x)+(1-\phi(x))g(x).
\end{equation*}
Then $\tilde{g}\in W^{1,\infty}(D)$ due to the Lipschitz continuity of the distance function. Moreover, for $x\in\overline{D}$ let $x_{p}$ be any point such that $|x-x_{p}|=\dist(x,\partial D)$. Since $g\geq\varphi_{\e}$ on $\partial D$ by assumption, we obtain 
\begin{align*}
\tilde{g}(x)-\varphi_{\e}(x)&\geq \phi(x)(c_g-g(x))+g(x)-g(x_{p})+\varphi_{\e}(x_{p})-\varphi_{\e}(x)
\\
&\geq  \phi(x)(c_g-g(x))-C|x-x_{p}|
\geq\begin{cases}
	c_g-g(x)-C\,{\rm diam}(D) &\mbox{if $\dist(x,\partial D)\geq 1$,}
	\\
	|x-x_{p}|(c_g-g(x)-C) &\mbox{otherwise.}
\end{cases},
\end{align*}
where in the penultimate estimate we used that $g+\varphi_{\e}$ is bounded in $W^{1,\infty}(D)$ as $\e\to 0$, so that by the Lipschitz regularity of $\partial D$ the sequence $g+\varphi_{\e}$ is equi-Lipschitz on $\overline{D}$ as $\e\to 0$. For $c_g$ large enough the right-hand side terms in the above estimate are nonnegative. Finally, it holds that $\tilde{g}=g$ on $\partial D$, so that from now on we may assume that for $\e>0$ small enough \begin{equation}\label{eq:boundaryconditionprepared}
g\geq \max\{\varphi,\varphi_{\e}\}\quad\text{ on }\overline{D}.	
\end{equation}
We now come to the actual proof. Note that when $u_{\e}\to u$ and $\varphi_{\e}\to\varphi$ in $L^1(D)$, the condition $u_{\e}\geq \varphi_{\e}$ a.e. implies that $u\geq\varphi$ a.e. Since $F_{\e,f_{\e},g}^{\varphi_{\e}}(\w,u,D)\geq F_{\e,f_{\e},g}(\w,u,D)$ for all $u\in L^1(D,\R^m)$, the compactness property and the $\Gamma$-liminf inequality follow from Lemma \ref{l.boundarycompactness} and Proposition \ref{p.lb_contrained}, respectively. Therefore it suffices to show the $\Gamma$-limsup inequality.	Without loss of generality we set $f_{\e}=0$ since the linear term is a continuously converging perturbation with respect to weak convergence in $W^{1,1}(D)$. 

For the moment, we ignore the boundary condition $g$ and consider a general map $u\in W^{1,\infty}(D)$ such that $u\geq \varphi$. For $\eta>0$ consider the function $u_{\eta}:=u+\eta\in W^{1,\infty}(D)$, which satisfies 
\begin{equation}\label{eq:eta_barrier}
	u_{\eta}\geq \varphi+\eta.
\end{equation}
Fix $\delta>0$. By Lemma \ref{l.truncation} we find a sequence $u_{\e,\delta,\eta}\in W^{1,1}(D)$ such that $\|u_{\e,\delta,\eta}\|_{\infty}\leq C_{\delta}$, $u_{\e,\delta,\eta}\to u_{\eta}$ in $L^1(D)$ as $\e\to 0$ and
\begin{equation*}
\limsup_{\e\to 0}F_{\e}(\w,u_{\e,\delta,\eta},D)\leq(1+\delta)\int_D f_{\rm hom}(\nabla u_{\eta}(x))\dx+\delta=(1+\delta)\int_D f_{\rm hom}(\nabla u(x))\dx+\delta.
\end{equation*}
In order to satisfy the constraint, we introduce $v_{\e,\delta,\eta}=\max\{u_{\e,\delta,\eta},\varphi_{\e}\}\in W^{1,1}(D)$. Writing $v_{\e,\delta,\eta}=\max\{\varphi_{\e}-u_{\e,\delta,\eta},0\}+u_{\e,\delta,\eta}$, we see that $v_{\e,\delta,\eta}\to u_{\eta}$ in $L^1(D)$ and from the chain rule it follows that
\begin{equation*}
\nabla v_{\e,\delta,\eta}=\nabla(\varphi_{\e}-u_{\e,\delta,\eta})\chi_{\{\varphi_{\e}>u_{\e,\delta,\eta}\}}+\nabla u_{\e,\delta,\eta}=\nabla\varphi_{\e}\chi_{\{\varphi_{\e}>u_{\e,\delta,\eta}\}}+\nabla u_{\e,\delta,\eta}\chi_{\{\varphi_{\e}\leq u_{\e,\delta,\eta}\}}\quad\text{ a.e. in }D.
\end{equation*} 
Using the upper bound in Assumption \ref{a.1} and the nonnegativity of the integrand $f$, we deduce that
\begin{align*}
F_{\e}(\w,v_{\e,\delta,\eta},D)&\leq F_{\e}(\w,u_{\e,\delta,\eta},D)+\int_{\{\varphi_{\e}> u_{\e,\delta,\eta}\}} |\nabla \varphi_{\e}(x)|^p|A(\w,\tfrac{x}{\e})|^p+\Lambda(\w,\tfrac{x}{\e})\dx
\\
&\leq F_{\e}(\w,u_{\e,\delta,\eta},D)+(\|\nabla\varphi_{\e}\|_{\infty}+1)\int_{\{\varphi_{\e}> u_{\e,\delta,\eta}\}}|A(\w,\tfrac{x}{\e})|^p+\Lambda(\w,\tfrac{x}{\e})\dx.
\end{align*}
We claim that the last term vanishes as $\e\to 0$. Since $\varphi_{\e}$ is bounded in $W^{1,\infty}(D)$ and $x\mapsto |A(\w,\tfrac{x}{\e})|^p+\Lambda(\w,\tfrac{x}{\e})$ is equi-integrable by Lemma \ref{l.weakL1}, it suffices to show that $|\{\varphi_{\e}>u_{\e,\delta,\eta}\}|\to 0$ as $\e\to 0$. Since $\varphi_{\e}\to\varphi$ and $u_{\e,\delta,\eta}\to u_{\eta}$ in $L^1(D)$, this is a consequence of \eqref{eq:eta_barrier}. Summing up, we have constructed a sequence $v_{\e,\delta,\eta}\in W^{1,1}(D)$ such that $v_{\e,\delta,\eta}\to u_{\eta}$ in $L^1(D)$, $\|v_{\e,\delta,\eta}\|\leq C'_\delta$, $v_{\e,\delta,\eta}\geq\varphi_{\e}$ in $D$ and
\begin{equation*}
	\limsup_{\e\to 0}F_{\e}(\w,v_{\e,\delta,\eta},D)\leq (1+\delta)\int_D f_{\rm hom}(\nabla u(x))\dx+\delta.
\end{equation*}
Since $u_{\eta}\to u$ in $L^1(D)$ as $\eta\to 0$, using a diagonal argument we find a sequence ${u}_{\e,\delta}\in W^{1,1}(D)$ such that $u_{\e,\delta}\to u$ in $L^1(D)$ as $\e\to 0$, $\|u_{\e,\delta}\|_{\infty}\leq C'_\delta$, $u_{\e,\delta}\geq\varphi_{\e}$ in $D$ and
\begin{equation*}
	\limsup_{\e\to 0}F_{\e}(\w,\tilde{u}_{\e,\delta},D)\leq (1+\delta)\int_D f_{\rm hom}(\nabla u(x))\dx+\delta.
\end{equation*}

Next we include the boundary condition $g$. Fix $u\in g+W^{1,p}_0(D)$ such that $ u\geq\varphi$ a.e. In order to repeat the argument for Proposition \ref{p.ub_constrained} we need to reduce the analysis to the case that $u-g$ is has compact support in $D$ and that $u\in L^{\infty}(D)$. To this end, consider a sequence $u_n\in g+C_c^{\infty}(D)$ such that $u_n\to u$ in $W^{1,p}(D)$. In general, this sequence does not satisfy the constraint $u_n\geq\varphi$ a.e. Hence, we consider the modified sequence $v_n=\max\{u_n,\varphi\}\in W^{1,\infty}(D)$. Since $g\geq\varphi$ on $\overline{D}$, it holds that $v_n-g\in W^{1,\infty}_c(D)$ and $v_n\geq\varphi$ a.e. Moreover, writing $v_n=\max\{\varphi-u_n,0\}+u_n$, the Lipschitz continuity of the map $x\mapsto \max\{x,0\}$ and the convergence $u_n\to u$ in $W^{1,p}(D)$ imply that $v_n\to \max\{\varphi-u,0\}+u=u$ in $W^{1,p}(D)$ (cf. \cite[Theorem 1]{MM}). As a consequence, it suffices to show the upper bound for functions $u\in g+W_c^{1,\infty}(D)$ such that $u\geq\varphi$. From the above argument we know that for every $\delta>0$ there exists a sequence $u_{\e,\delta}\in W^{1,1}(D)$ with $u_{\e,\delta}\to u$ in $L^1(D)$ as $\e\to 0$, $\|u_{\e,\delta}\|_{\infty}\leq C'_\delta$, $u_{\e,\delta}\geq \varphi_{\e}$ in $D$ and
\begin{equation*}
	\limsup_{\e\to 0}F_{\e}(\w,u_{\e},D)\leq(1+\delta)\int_D f_{\rm hom}(\nabla u(x))\dx+\delta.
\end{equation*}
We modify the sequence $u_{\e,\delta}$ in the same manner as in the proof of Proposition \ref{p.ub_constrained}. The estimates are analogous, but we have to ensure that the constraint is preserved. To this end, recall that due to the boundedness in $L^{\infty}(D)$, the only modification is the adjustment of the boundary condition via a convex combination of $u_{\e,\delta}$ and $g$ (cf. \eqref{eq:convexcombo}). Due to \eqref{eq:boundaryconditionprepared} this construction still dominates $\varphi_{\e}$. The remaining part of the proof is unchanged and we conclude by a diagonal argument with respect to $\delta$.
\end{proof}

\subsection{Stochastic homogenization of the Euler-Lagrange equations}
In this section (and only here) we add Assumption \ref{a.2} to the setting. In order to prove the differentiability and strict convexity of the homogenized integrand, we first derive a non-asymptotic formula for $f_{\rm hom}$ that is well-known in the non-degenerate setting (see, for instance, \cite[Chapter 15]{JKO} or \cite[Lemma 3.7]{DG_unbounded}). 
	
Define the set 
\begin{align}\label{eq:def_F_pot}
		F_{\rm pot}^1:=\{h\in L^1(\Omega,\R^d):\,&\mathbb{E}[h]=0\text{ and for a.e. }\w\in\Omega \text{ the function }x\mapsto h(\tau_x\w)\in L^1_{\rm loc}(\R^d,\R^d)\text{ satisfies }\nonumber
		\\
		&\;\partial_i h_j-\partial_j h_i=0 \text{ on }\R^d
		\text{ in the sense of distributions for all }1\leq i,j\leq d\}. 
\end{align}
Even though $d\neq 3$ in general, we refer to the property $\partial_i h_j-\partial_j h_i=0$ as being curl-free. The following lemma holds.
	\begin{lemma}\label{l.F_pot}
		The space $F_{\rm pot}$ is a closed subspace of $L^1(\Omega,\R^d)$. Moreover, given $h\in F_{\rm pot}^1$, there exists a map $\varphi:\Omega\to W^{1,1}_{\rm loc}(\R^d)$ such that $\nabla\varphi(\w,x)=h(\tau_x\w)$ almost surely as maps in $L^1_{\rm loc}(\R^d,\R^d)$ and such that for every bounded set $B\subset\R^d$ the maps $\w\mapsto \varphi(\w,\cdot)$ and $\w\mapsto \nabla\varphi(\w,\cdot)$ are measurable from $\Omega$ to $L^1(B)$ and to $L^1(B,\R^d)$, respectively.
	\end{lemma}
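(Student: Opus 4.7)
The plan is to address the two parts of the statement separately, first the closedness, then the existence and measurability of the potential.

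For closedness, suppose $h_n\in F_{\rm pot}^1$ converges to $h$ in $L^1(\Omega,\R^d)$. The condition $\E[h]=0$ is preserved by continuity of the expectation. For the curl-free property, the key observation is that by joint measurability of $(x,\w)\mapsto \tau_x\w$, the measure-preserving property, and Tonelli's theorem, for every ball $B_R\subset\R^d$ one has
\[
\int_\Omega \|h_n(\tau_\cdot\w)-h(\tau_\cdot\w)\|_{L^1(B_R,\R^d)}\,d\P(\w)=|B_R|\,\E[|h_n-h|]\to 0.
\]
A diagonal argument over an exhaustion of $\R^d$ by balls $B_{R_k}$ gives a subsequence along which $h_n(\tau_\cdot\w)\to h(\tau_\cdot\w)$ in $L^1_{\rm loc}(\R^d,\R^d)$ for a.e.\ $\w$. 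Since $L^1_{\rm loc}$-convergence implies distributional convergence and the curl-free condition $\partial_ih_j-\partial_jh_i=0$ is closed in $\mathcal{D}'(\R^d)$, the limit $h$ inherits the curl-free property.

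For the existence of the potential, for a.e.\ $\w$ the vector field $H_\w(x):=h(\tau_x\w)$ lies in $L^1_{\rm loc}(\R^d,\R^d)$ and is curl-free in the distributional sense. Since $\R^d$ is simply connected, the distributional Poincar\'e lemma provides $\varphi_\w\in\mathcal{D}'(\R^d)$ with $\nabla\varphi_\w=H_\w$, unique up to an additive constant. As $\nabla\varphi_\w\in L^1_{\rm loc}$, one has $\varphi_\w\in W^{1,1}_{\rm loc}(\R^d)$, and I would fix the constant by imposing $\dashint_{B_1(0)}\varphi_\w\dx=0$.

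For the measurability in $\w$, I would proceed by approximation. Let $\rho_\delta$ be a standard mollifier and set $h^\delta(\w,x):=\int_{\R^d}\rho_\delta(x-y)h(\tau_y\w)\dy$, which is jointly measurable in $(\w,x)$ and smooth and curl-free in $x$ for each $\w$. Since $\R^d$ is star-shaped about the origin, the explicit formula
\[
\varphi^\delta(\w,x):=\int_0^1 h^\delta(\w,tx)\cdot x\,dt
\]
satisfies $\nabla_x\varphi^\delta(\w,x)=h^\delta(\w,x)$ pointwise, and is jointly measurable in $(\w,x)$. Normalize by subtracting $\dashint_{B_1(0)}\varphi^\delta(\w,\cdot)\dx$. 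On a fixed ball $B_R$ one has $h^\delta(\w,\cdot)\to H_\w$ in $L^1(B_R,\R^d)$ a.s., so Poincar\'e's inequality in $W^{1,1}$ with zero mean on $B_1$ yields that the normalized $\varphi^\delta(\w,\cdot)$ is Cauchy in $W^{1,1}(B_R)$ a.s. The limit equals the $\varphi(\w,\cdot)$ constructed above (by uniqueness up to constants of the primitive), and as a pointwise a.s.\ limit of measurable maps $\Omega\to L^1(B_R)$ it is itself measurable.

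The main obstacle is coherence across different balls: I need a single map $\varphi:\Omega\to W^{1,1}_{\rm loc}(\R^d)$, not a family of potentials defined on each ball up to an $R$-dependent null set. This is handled by exhausting $\R^d$ by a countable sequence $B_{R_k}$, carrying out the above convergence on each $B_{R_k}$, and intersecting the countably many full-measure sets. Since the normalization $\dashint_{B_1}\varphi(\w,\cdot)\dx=0$ is independent of $R_k$ and the distributional primitive on $\R^d$ is unique up to constants, the restrictions to different balls patch together to a single $W^{1,1}_{\rm loc}(\R^d)$-valued map, giving the required measurability of $\w\mapsto\varphi(\w,\cdot)$ on every bounded set.
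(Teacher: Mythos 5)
Your proposal is correct and takes essentially the same route as the paper: closedness via the Tonelli/measure-preserving identity and a diagonal extraction to get a.s.\ $L^1_{\rm loc}$-convergence of $x\mapsto h_n(\tau_x\w)$ (the paper cites \cite[p.~224]{JKO} for this step, while you supply the details), and then mollification of $h(\tau_\cdot\w)$ together with the explicit star-shaped primitive $\int_0^1 h^\delta(\w,tx)\cdot x\,\mathrm{d}t$ to obtain joint measurability, followed by passage to the limit in $W^{1,1}$ on balls using the mean-zero normalization on $B_1(0)$ and Poincar\'e's inequality. The only stylistic difference is that you first invoke an abstract distributional Poincar\'e lemma to produce $\varphi(\w,\cdot)$ and then redo the mollification argument for measurability; the paper skips the abstract step and extracts existence directly from the mollified sequence, which is slightly leaner since the abstract step is anyway made redundant by the construction. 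One small point you leave implicit is the passage from joint $\mathcal{F}\otimes\L^d$-measurability of $\varphi^\delta$ to Bochner measurability of $\w\mapsto\varphi^\delta(\w,\cdot)\in L^1(B)$; the paper handles this by invoking \cite[Lemma 16 b), p.~196]{DS}, and you should do the same before appealing to stability of Bochner measurability under a.s.\ limits.
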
 
	\begin{proof}
		$F_{\rm pot}^1$ is a linear subspace of $L^1(\Omega,\R^d)$. To show that it is closed, consider a sequence $h_n\in F_{\rm pot}^1$ such that $h_n\to h$ in $L^1(\Omega,\R^d)$. Then $\mathbb{E}[h]=0$ and, as shown on \cite[p. 224]{JKO}, the convergence implies that (up to a subsequence) it holds that $x\mapsto h_n(\tau_x\w)\to x\mapsto h(\tau_x\w)$ in $L^1_{\rm loc}(\R^d,\R^d)$ for almost every $\w\in\Omega$. Hence it follows that $h\in F_{\rm pot}^1$. Next, we argue that for almost every $\w\in\Omega$ there exists $\varphi(\w,\cdot)\in W^{1,1}_{\rm loc}(\R^d)$ such that $\nabla\varphi(\w,x)=h(\tau_x\w)$ as elements in $L^1_{\rm loc}(\R^d,\R^d)$. Since $\Omega$ is complete, we can assume without loss of generality that $\partial_ih_j= \partial_jh_i$ for all $\w\in\Omega$. To reduce notation, we temporarily suppress the dependence on $\w$ and just write $h=h(x)$. Given $\eta>0$ we consider the regularization $h_{\eta}=h*\theta_{\eta}$, where $\theta_{\eta}\in C_c^{\infty}(\R^d)$ is a family of standard mollifiers. Then $h_{\eta}\in C^{\infty}(\R^d,\R^d)$ and due to Fubini's theorem it follows that in a distributional (and hence classical) sense $\partial_i h_{\eta,j}-\partial_j h_{\eta,i}=0$ for all $1\leq i,j\leq d$. By the classical Poincaré lemma on simply connected domains there exists a function $\varphi_{\eta}\in C^{\infty}(\R^d)$ such that $\nabla\varphi_{\eta}=h_{\eta}$ and $\dashint_{B_1(0)}\phi_{\eta}\dx=0$. Fix now any ball $B'$ centered at the origin and containing $B_1(0)$. Then we have a Poincaré inequality of the form
		\begin{equation*}
			\int_{B'} |u-\dashint_{B_1(0)}u(y)\dy|\dx\leq C(B')\int_{B'}|\nabla u|\dx\quad\text{ for all }u\in W^{1,1}(B').
		\end{equation*} 
		By well-known properties of convolution, we have that $\nabla\varphi_{\eta}\to h$ in $L^1(B',\R^d)$ and by the Sobolev embedding also $\varphi_{\eta}\to \varphi$ for some $\varphi\in W^{1,1}(B')$ with $\nabla \varphi=h$ in $B'$. Since $B'$ was arbitrary, we conclude that $\varphi\in W^{1,1}_{\rm loc}(\R^d)$ with $\nabla \varphi=h$. Moreover, it follows that for any bounded set $B\subset\R^d$ we have that $\varphi_{\eta}\to \varphi$ in $W^{1,1}(B)$. Hence the measurability properties of the map $\w\mapsto \varphi(\w,\cdot)$ and its gradient follow once we prove them for the approximating map $\varphi_{\eta}(\w,\cdot)$ and its gradient. The construction by convolution yields an explicit formula for $\varphi_{\eta}$ which reads
		\begin{equation*}
			\varphi_{\eta}(\w,x)=\int_0^1 h_{\eta}(\w,tx)\cdot x	\,\mathrm{d}t=\int_0^1\int_{\R^d}h(\tau_y\w)\theta_{\eta}(tx-y)\cdot x\dy\,\mathrm{d}t.
		\end{equation*} 
		Since we assume that $(\w,x)\mapsto\tau_{x}\w$ is jointly measurable, it follows from Fubini's theorem that $h_{\eta}$ is measurable in $\w$, and by smoothness also continuous in its second variable. Hence $h_{\eta}$ is jointly measurable and again by Fubini's theorem and regularity in the second variable, we deduce that $\varphi_{\eta}$ is jointly measurable. By construction, $\nabla\varphi_{\eta}=h_{\eta}$ is also jointly measurable. Then by \cite[Lemma 16 b), p. 196]{DS} the maps $\w\mapsto \varphi_{\eta}(\w,\cdot)$ and $\w\mapsto\nabla\varphi_{\eta}(\w,\cdot)$ are measurable with values in $L^1(B)$ and $L^1(B,\R^d)$, respectively. This concludes the proof. 
\end{proof}
We shall also need a suitable estimate for the local Lipschitz constant of $\xi\mapsto f(\w,x,\xi)$ that we prove in the next lemma. We show a slightly more general statement to include the claim in Remark \ref{r.weaksol} b).
\begin{lemma}\label{l.f_differentiable}
In addition to Assumption \ref{a.1}, assume that the map $\xi\mapsto f(\w,x,\xi)$ is separately convex. Then there exists a constant $C=C(p,d+m)$ such that for all $y:=(\w,x)\in\Omega\times\R^d$ and all $\xi_0,\xi_1\in\R^{m\times d}$
\begin{equation}\label{eq:f_quantitativeLipschitz}
|f(y,\xi_1)-f(y,\xi_0)|\leq C\left((\Lambda(y)^{\frac{1}{p}}+|\xi_0A(y)|+|\xi_0A(y)|)^{p-1}+\Lambda(y)^{\frac{p-1}{p}}\right)|(\xi_1-\xi_0)A(y)|.
\end{equation}
In particular, if $f(y,\cdot)$ is also differentiable in $\xi$, then for all $z\in\R^{m\times d}$ we have
\begin{equation}\label{eq:f_quantitativedifferentiable}
|\langle \partial_{\xi}f(y,\xi),z\rangle|\leq C\left((\Lambda(y)^{\frac{1}{p}}+|A(y)\xi|)^{p-1}+\Lambda(y)^{\frac{p-1}{p}}\right)|zA(y)|.
\end{equation}	
\end{lemma}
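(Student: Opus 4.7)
The strategy is to reduce the claim to the well-known Lipschitz estimate for nonnegative separately convex functions of standard $p$-growth: if $\tilde g : \mathbb{R}^N \to [0,+\infty)$ is separately convex and satisfies $\tilde g(\eta) \leq |\eta|^p + \Lambda$, then
\begin{equation*}
|\tilde g(\eta_1) - \tilde g(\eta_0)| \leq C(N,p)\,\bigl(|\eta_0| + |\eta_1| + \Lambda^{1/p}\bigr)^{p-1}\,|\eta_1 - \eta_0|.
\end{equation*}
To bring \eqref{eq:f_quantitativeLipschitz} into this form, my plan is to absorb the diagonal weight $A(y)$ via a change of variables. Temporarily assuming that all eigenvalues $\lambda_j(y)$ of $A(y)$ are strictly positive, set $\tilde g(\eta) := f(y, \eta A(y)^{-1})$. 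Since the transformation $\eta_{ij} = \lambda_j(y)\,\xi_{ij}$ is just a coordinatewise rescaling, separate convexity of $f(y,\cdot)$ transfers to $\tilde g$, and Assumption~\ref{a.1} yields $0 \leq \tilde g(\eta) \leq |\eta|^p + \Lambda(y)$. Applying the classical estimate at $\eta_k = \xi_k A(y)$, $k=0,1$, and undoing the change of variables immediately gives \eqref{eq:f_quantitativeLipschitz}, with constant $C(N,p)$ absorbed into $C(d+m,p)$ since $N = md \leq (d+m)^2/4$.

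For the underlying classical estimate I would telescope along the $N$ coordinate axes, reducing to the one-dimensional fact that a nonnegative convex $h : \mathbb{R} \to \mathbb{R}$ with $h(t) \leq M$ on $[-R, \delta + R]$ satisfies $|h(\delta) - h(0)| \leq (M/R)|\delta|$ for $0 \leq \delta \leq R$, a direct consequence of the monotonicity of secant slopes. Traveling from $\eta_0$ to $\eta_1$ by changing one coordinate at a time and choosing $R := |\eta_0| + |\eta_1| + \Lambda^{1/p}$, each intermediate slice of $\tilde g$ is controlled on the enlarged interval by $(3R)^p + \Lambda \leq C R^p$, producing a stepwise increment of at most $C R^{p-1}|\eta_{1,k} - \eta_{0,k}|$. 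Summing the $N = md$ contributions and using Cauchy--Schwarz yields the claimed Lipschitz constant.

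The main technical obstacle is the possibility that $A(y)$ is degenerate, i.e.\ $\lambda_j(y) = 0$ for some $j \in \{1,\ldots,d\}$, in which case the change of variables is not well-defined. Here I would argue that the upper bound $f(y,\xi) \leq |\xi A(y)|^p + \Lambda(y)$ is independent of the entries $\xi_{ij}$ for $i=1,\ldots,m$, so for each such coordinate $f(y,\cdot)$ is a convex function of a single scalar variable bounded from above. Since a one-dimensional convex function bounded from above is necessarily constant, $f(y,\xi)$ does not depend on those $\xi_{ij}$ at all, and the change of variables together with the preceding argument can be carried out on the reduced matrix of columns with $\lambda_j(y) > 0$. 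The quantity $|(\xi_1 - \xi_0) A(y)|$ depends only on those columns as well, so \eqref{eq:f_quantitativeLipschitz} carries over to the general case.

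Finally, \eqref{eq:f_quantitativedifferentiable} is obtained from \eqref{eq:f_quantitativeLipschitz} by specialization: take $\xi_0 = \xi$ and $\xi_1 = \xi + t z$ with $t > 0$, divide both sides by $t$, and pass to the limit $t \downarrow 0$ using the differentiability of $f(y,\cdot)$. The right-hand side converges to $C\bigl((\Lambda(y)^{1/p} + 2|\xi A(y)|)^{p-1} + \Lambda(y)^{(p-1)/p}\bigr)|zA(y)|$, which is absorbed into the stated bound after enlarging the constant.
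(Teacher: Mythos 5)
Your proof is correct and rests on the same core device as the paper's: pass to $\widetilde f(\eta)=f(y,\eta A(y)^{-1})$, observe that separate convexity and the $p$-growth bound $\widetilde f\leq|\cdot|^p+\Lambda$ survive the diagonal rescaling, apply a local Lipschitz estimate for separately convex bounded functions, and undo the change of variables. Where you differ is in how that Lipschitz estimate is obtained and how the gradient bound is extracted. The paper simply cites \cite[Theorem 4.36]{FoLe}, which gives the Lipschitz constant on $B_r(\xi)$ as $\sqrt{m+d}\,{\rm osc}(\widetilde f;B_{2r}(\xi))/r$, and then derives \eqref{eq:f_quantitativedifferentiable} via the chain rule $\langle\partial_\xi f(\xi),z\rangle=\langle\partial_\xi\widetilde f(\xi A),zA\rangle$. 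You instead re-derive the Lipschitz estimate from scratch by telescoping along the $md$ coordinate axes and invoking the secant-slope bound for scalar convex functions, and you obtain \eqref{eq:f_quantitativedifferentiable} by a divide-and-limit argument. Your approach is more self-contained; it produces the slightly worse constant $\sqrt{md}$ in place of $\sqrt{m+d}$, which is irrelevant since the lemma allows $C=C(p,d+m)$. You also take explicit care of the case where $A(y)$ is singular, noting that separate convexity and the $A$-weighted upper bound force $f(y,\cdot)$ to be independent of the columns corresponding to zero eigenvalues; the paper passes over this because the integrability hypothesis $|A(\cdot,0)^{-1}|^{p/(p-1)}\in L^1(\Omega)$ already presupposes invertibility, but your observation closes the gap for the ``all $y$'' formulation of the statement. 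Both routes are sound and lead to the same estimate.
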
 
\begin{proof}
Since $y=(\w,x)$ is fixed in the proof, we drop it from the notation. By \cite[Theorem 4.36]{FoLe}, a separately convex function $\widetilde{f}$ is Lipschitz-continuous on any ball $B_r(\xi)\subset\R^{m\times d}$ with Lipschitz constant bounded by
\begin{equation*}
	\sqrt{m+d}\,\frac{{\rm osc}(\widetilde{f};B_{2r}(\xi))}{r}:=\sqrt{m+d}\,\sup_{\xi',\xi''\in B_{2r}(\xi)}\frac{|\widetilde{f}(\xi'')-\widetilde{f}(\xi')|}{r}.
\end{equation*}
Consider the function $\xi\mapsto \widetilde{f}(\xi):=f(\xi A^{-1})$, which according to Assumption \ref{a.1} satisfies
\begin{equation*}
	|\widetilde{f}(\xi)|\leq |\xi|^p+\Lambda.
\end{equation*}
Since $A^{-1}$ is also a diagonal matrix, it follows that $\widetilde{f}$ is separately convex, too. Hence for any given $\xi_0,\xi_1\in\R^{m\times d}$ we infer that
\begin{align*}
	|\widetilde{f}(\xi_1)-\widetilde{f}(\xi_0)|&\leq\sqrt{m+d}\sup_{|\xi|\leq 2(\Lambda^{\frac{1}{p}}+|\xi_1|+|\xi_0|)} \frac{2|\widetilde{f}(\xi)|}{\Lambda^{\frac{1}{p}}+|\xi_0|+|\xi_1|}|\xi_1-\xi_0|
	\\
	&\leq \sqrt{m+d}\left(2^{p+1}(\Lambda^{\frac{1}{p}}+|\xi_0|+|\xi_1|)^{p-1}+2\Lambda^{\frac{p-1}{p}}\right)|\xi_1-\xi_0|,
\end{align*}
which implies \eqref{eq:f_quantitativeLipschitz} via the formula $f(\xi)=\widetilde{f}(\xi A)$. When $f$ is differentiable with respect to the last variable, we deduce from the chain rule and the above local Lipschitz estimate that
\begin{equation*}
	|\langle \partial_{\xi}f(\xi),z\rangle|=|\langle (\partial_{\xi}\widetilde{f})(\xi A) ,zA\rangle|\nonumber
	\leq C\left((\Lambda^{\frac{1}{p}}+|\xi A|)^{p-1}+\Lambda(\w,x)^{\frac{p-1}{p}}\right)|zA|.	
\end{equation*}
\end{proof}

Now we can formulate an alternative formula for $f_{\rm hom}$ that allows us to prove its strict convexity and differentiability with elementary arguments.
	\begin{lemma}\label{l.F_pot_formula}
		In addition to Assumption \ref{a.1}, suppose that the map $\xi\mapsto f(\w,x,\xi)$ is convex. Then for all $\xi\in\R^{m\times d}$, there exists $h_{\xi}\in (F_{\rm pot}^1)^m$ such that
		\begin{equation}\label{eq:pot_formula}
			f_{\rm hom}(\xi)=\inf_{h\in (F_{\rm pot}^1)^m}\mathbb{E}[f(\cdot,0,\xi+h)]= \mathbb{E}[f(\cdot,0,\xi+h_{\xi})].
		\end{equation}
	\end{lemma}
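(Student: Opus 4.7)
The plan is to prove the two inequalities $f_{\rm hom}(\xi) \leq \inf_{h} \mathbb{E}[f(\cdot,0,\xi+h)]$ and the reverse, together with attainment, in three separate steps.

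\textbf{Upper bound $f_{\rm hom}(\xi) \leq \mathbb{E}[f(\cdot,0,\xi+h)]$.} Fix $h \in (F_{\rm pot}^1)^m$ with $\mathbb{E}[f(\cdot,0,\xi+h)] < +\infty$ (else nothing to prove). Lemma~\ref{l.F_pot} produces a measurable $\varphi:\Omega\to W^{1,1}_{\rm loc}(\R^d,\R^m)$ with $\nabla\varphi(\w,x) = h(\tau_x\w)$. Set $u_\e(x) := \xi x + \e\varphi(\w, x/\e) - c_\e(\w)$, with $c_\e$ chosen so that $u_\e - \xi x$ has zero mean on $D$. Since $\nabla(u_\e - \xi x) = h(\tau_{\cdot/\e}\w)$ converges weakly in $L^1(D,\R^{m\times d})$ to $\mathbb{E}[h] = 0$ by Lemma~\ref{l.weakL1}, Poincaré combined with Rellich gives $u_\e \to \xi x$ in $L^1(D,\R^m)$ (every subsequential $L^1$-limit is in $W^{1,1}$ with zero gradient and zero mean, hence vanishes). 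By stationarity (A3) the energy rewrites as $F_\e(\w,u_\e,D) = \int_D g(\tau_{x/\e}\w)\dx$ for $g := f(\cdot,0,\xi+h) \in L^1(\Omega)$, and Theorem~\ref{thm.additiv_ergodic} gives $F_\e(\w,u_\e,D) \to |D|\,\mathbb{E}[g]$. The $\Gamma$-liminf inequality of Theorem~\ref{thm.Gamma_pure} applied to $\xi x$ then yields the desired bound.

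\textbf{Attainment via Dunford-Pettis.} Let $h_n$ be a minimizing sequence. From the lower bound in (A2), $\mathbb{E}[|(\xi+h_n)A(\cdot,0)|^p] \leq C$. Hölder's inequality with exponents $p$, $p/(p-1)$, together with the hypothesis $|A(\cdot,0)^{-1}|^{p/(p-1)} \in L^1(\Omega)$, yields both a uniform $L^1(\Omega)$-bound on $h_n$ and, by applying the same estimate on arbitrary measurable $E\in\mathcal{F}$, equi-integrability. Dunford-Pettis then extracts $h_n \rightharpoonup h_\xi$ in $L^1(\Omega,\R^d)^m$ along a subsequence; closedness of $F_{\rm pot}^1$ (Lemma~\ref{l.F_pot}) makes it weakly closed, so $h_\xi \in (F_{\rm pot}^1)^m$. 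The convexity of $f(\w,0,\cdot)$ combined with (A1) renders $h \mapsto \mathbb{E}[f(\cdot,0,\xi+h)]$ weakly $L^1$-lower semicontinuous (Fatou along an a.e.-converging subsequence extracted via Mazur), so the infimum is attained at $h_\xi$.

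\textbf{Lower bound $\inf_h \mathbb{E}[f(\cdot,0,\xi+h)] \leq f_{\rm hom}(\xi)$.} The strategy is to extract a stationary corrector from almost-minimizers of the asymptotic cell formula. Take $u_t \in \xi x + W^{1,1}_0(tQ,\R^m)$ satisfying $|tQ|^{-1} F_1(\w,u_t,tQ) \to f_{\rm hom}(\xi)$ and extend $u_t - \xi x$ by zero to $\R^d$. Build $h_t(\w) \in (F_{\rm pot}^1)^m$ by averaging $\nabla u_t(\tau_{-y}\w, y) - \xi$ over $y$ in a large interior subcube of $tQ$: membership in $(F_{\rm pot}^1)^m$ follows from the fact that $\nabla u_t - \xi$ is a genuine gradient of a $W^{1,1}_0$ function together with the measure-preserving property of the group action. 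Jensen's inequality applied to the convex integrand $f$, combined with Fubini and stationarity, then gives $\mathbb{E}[f(\cdot,0,\xi+h_t)] \leq |tQ|^{-1} F_1(\w,u_t,tQ) + o(1)$, the $o(1)$ bounding the boundary layer via the equi-integrability of Lemma~\ref{l.weakL1}. A weak $L^1$-subsequential limit $h_\infty$ of $h_t$ (via the same compactness as in the attainment step) then lies in $(F_{\rm pot}^1)^m$ and satisfies $\mathbb{E}[f(\cdot,0,\xi+h_\infty)] \leq f_{\rm hom}(\xi)$.

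The main obstacle will be the stationarization step in the last paragraph: producing a genuinely curl-free element of $(F_{\rm pot}^1)^m$ from a compactly supported gradient field on $tQ$, while keeping its expected energy close to $|tQ|^{-1} F_1(\w,u_t,tQ)$. In our degenerate setting we only have $L^1$-type bounds on $h_t$, so the stationarization must be compatible with Dunford-Pettis compactness, and it is precisely the assumption $|A^{-1}|^{p/(p-1)} \in L^1(\Omega)$ from (A2) that guarantees the equi-integrability required at every step.
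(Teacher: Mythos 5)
Your overall architecture is the same as the paper's: attainment of the infimum via the H\"older/$|A(\cdot,0)^{-1}|^{p/(p-1)}\in L^1(\Omega)$ equi-integrability bound, Dunford--Pettis and the weak closedness of $(F_{\rm pot}^1)^m$; one inequality by turning $h\in(F_{\rm pot}^1)^m$ into an oscillating competitor via Lemma \ref{l.F_pot}; the other by stationarizing minimizers of the cell problem through Fubini, stationarity and Jensen. Your first step is in fact a small simplification of the paper's argument: by subtracting only a constant you keep $\nabla u_\e=\xi+h(\tau_{\cdot/\e}\w)$ exactly, so the $\Gamma$-liminf inequality of Theorem \ref{thm.Gamma_pure} evaluated at $\xi x$ together with Theorem \ref{thm.additiv_ergodic} gives $f_{\rm hom}(\xi)\leq\mathbb{E}[f(\cdot,0,\xi+h)]$ directly, whereas the paper passes through the auxiliary zero-average-gradient minimization problem and needs the local Lipschitz estimate of Lemma \ref{l.f_differentiable} to remove the correction $\dashint_Q h_\xi(\tau_{y/\e}\w)\dy$. (Minor points: normalize the constant on each connected component of $D$, and the $h$-dependent null set of the ergodic theorem is harmless because the resulting inequality is deterministic.)

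The lower bound, however, has genuine gaps as written. First, averaging $\nabla u_t(\tau_{-y}\w,y)-\xi$ over a proper \emph{interior} subcube $Q'\subsetneq tQ$ does not produce an element of $(F_{\rm pot}^1)^m$: in general $\dashint_{Q'}\nabla(u_t-\xi x)\dy\neq 0$, so $\mathbb{E}[h_t]=0$ fails, and after the change of variables $z=x-y$ the integration domain $x-Q'$ depends on $x$, so $x\mapsto h_t(\tau_x\w)$ is no longer a superposition of translated full-space gradients and the curl-free condition in \eqref{eq:def_F_pot} is not clear. The paper averages over the \emph{whole} cube: since $u_t-\xi x\in W^{1,1}_0(tQ,\R^m)$, its gradient extended by zero is the gradient of a compactly supported $W^{1,1}(\R^d,\R^m)$ function, the average becomes a convolution-type superposition of translates, and both the zero mean and the curl-freeness follow exactly; there is then no boundary layer whatsoever, so your $o(1)$ term is not needed. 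Second, the claimed bound $\mathbb{E}[f(\cdot,0,\xi+h_t)]\leq |tQ|^{-1}F_1(\w,u_t,tQ)+o(1)$ cannot hold as stated, since the left side is deterministic and the right side random; Jensen, Fubini and stationarity give the bound against $|tQ|^{-1}\mathbb{E}[\mu_{\xi}(\cdot,tQ)]$, and its convergence to $f_{\rm hom}(\xi)$ requires the mean ($L^1$) version of the subadditive ergodic theorem (as the paper cites) or a uniform-integrability upgrade of the almost sure convergence of Lemma \ref{l.existence_f_hom}, which your argument does not supply. Third, the formula $h_t(\w)=\dashint\nabla u_t(\tau_{-y}\w,y)\dy$ presupposes a jointly measurable selection $(\w,y)\mapsto\nabla u_t(\w,y)$ of (almost) minimizers; this is exactly where the paper invokes Lemma \ref{l.measurable} and a jointly measurable modification, and it cannot be passed over. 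All three points are repairable along the paper's lines, but the stationarization step does not go through as you describe it.
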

	\begin{proof}
		We first show the existence of a minimizer for the above minimization problem. The map $h\mapsto \mathbb{E}[f(\cdot,0,\xi+h)]$ is lower semicontinuous with respect to strong convergence in $L^1(\Omega,\R^{m\times d})$ due to Fatou's lemma and Assumption \ref{a.1}. As this functional is convex, it is also weakly lower semicontinuous. Being a linear space, $(F_{\rm pot}^1)^m$ is weakly closed due to Lemma \ref{l.F_pot}. Hence it suffices to show that a minimizing sequence $h_n$ is relatively weakly compact in $L^1(\Omega,\R^{m\times d})$. $\Omega$ has finite measure, so that it suffices to show that minimizing sequences are $L^1$-bounded and equi-integrable. Fix any measurable set $F\in\mathcal{F}$. Then, by H\"older's inequality and Assumption \ref{a.1}, we have that
		\begin{align*}
			\int_F |\xi+h_n|\,\mathrm{d}\mu&\leq \Big(\underbrace{\int_F|A(\w,0)(\xi+h_n(\w))|^p\,\mathrm{d}\mu}_{\leq C\mathbb{E}[f(\cdot,0,\xi+h_n)]\leq C(\xi)}\Big)^{\frac{1}{p}}\left(\int_F |A(\w,0)^{-1}|^{\frac{p}{p-1}}\right)^{\frac{p-1}{p}}
			\\
			&\leq C(\xi)\left(\int_F |A(\w,0)^{-1}|^{\frac{p}{p-1}}\,\mathrm{d}\mu\right)^{\frac{p-1}{p}}.
		\end{align*}
		Since the function $\w\mapsto |A(\w,0)^{-1}|^{p/(p-1)}$ is integrable by assumption, the above estimate proves the equi-integrability and boundedness of $h_n$ and thus there exists a minimizer $h_{\xi}\in (F_{\rm pot}^1)^m$. For the proof of the first equality in \eqref{eq:pot_formula}, we roughly follow \cite{DG_unbounded}. 
		Fix a cube $Q\subset D$. For every $\e>0$ consider a function $u_{\e}\in W^{1,1}(D,\R^m)$ with $\dashint_Q u_{\e}\dx=0$ and $\dashint_Q\nabla u_{\e}\dx=0$, satisfying
		\begin{equation*}
			\frac{1}{|Q|}F_{\e}(\w,u_{\e},Q)-\e\leq\inf\left\{\dashint_Q f(\w,\tfrac{x}{\e},\xi+\nabla u(x))\dx:\,\dashint_Q\nabla u\dx=0\right\}. 
		\end{equation*}
		Since $u=0$ is admissible in the above minimization problem, it follows from Lemma \ref{l.compactness} and the Poincar\'e inequality that up to a subsequence (not relabeled) we have $u_{\e}\to u$ in $L^1(Q,\R^m)$ for some $u\in W^{1,p}(Q,\R^m)$ with $\dashint_Q\nabla u\dx=0$. Applying Theorem \ref{thm.Gamma_pure} with $Q=D$, Jensen's inequality yields that
		\begin{align}\label{eq:zero_average_gradient}
			f_{\rm hom}(\xi)&=f_{\rm hom}\left(\dashint_Q\xi+\nabla u(x)\dx\right)\leq \dashint_Q f_{\rm hom}(\xi+\nabla u(x))\dx\nonumber
			\\
			&\leq \liminf_{\e\to 0}\inf\left\{\dashint_Q f(\w,\tfrac{x}{\e},\xi+\nabla u(x))\dx:\,\dashint_Q \nabla u\dx=0\right\}.
		\end{align}
		Here we used that $f_{\rm hom}$ inherits the convexity of $f$, which follows from the general fact that the $\Gamma$-limit of convex functionals is convex. Next, given a minimizer $h_{\xi}\in (F_{\rm pot}^1)^m$, let $\varphi_{\xi}:\Omega\to W^{1,1}_{\rm loc}(\R^d,\R^m)$ be the function given componentwise by Lemma \ref{l.F_pot}. Define the $W^{1,1}_{\rm loc}(\R^d,\R^m)$-valued random variable $u_{\xi,\e}$ by
		\begin{equation*}
			u_{\xi,\e}(x)=\e\varphi_{\xi}(\w)(x/\e)-\dashint_{Q}\nabla\varphi_{\xi}(\w)(y/\e)x\,\mathrm{d}y.
		\end{equation*}
		Then $\int_Q\nabla u_{\e}\dx=0$ and therefore almost surely
		\begin{align}\label{eq:corrector_good}
			\inf\left\{\dashint_Q f(\w,\tfrac{x}{\e},\xi+\nabla u(x))\dx:\,\dashint_Q\nabla u\dx=0\right\}&\leq \dashint_Qf(\w,\tfrac{x}{\e},\xi+\nabla u_{\xi,\e}(x))\dx\nonumber
			\\
			&=\dashint_Q f\left(\w,\tfrac{x}{\e},\xi+h_{\xi}(\tau_{x/\e}\w)-\dashint_Qh_{\xi}(\tau_{y/\e}\w)\dy\right)\dx.
		\end{align}
		The ergodic theorem \ref{thm.additiv_ergodic} implies that (expect for a null set depending on $\xi$ that we exclude for this proof)
		\begin{align}
			\lim_{\e\to 0}\dashint_Qh_{\xi}(\tau_{y/\e}\w)\dy&=\mathbb{E}[h_{\xi}]=0,\label{eq:ergodic_H}
			\\
			\lim_{\e\to 0}\dashint_Q f(\w,\tfrac{x}{\e},\xi+h_{\xi}(\tau_{x/\e}\w))\dx&=\mathbb{E}[f(\cdot,0,\xi+h_{\xi})].\label{eq:ergodic_energy}
		\end{align}
		To conclude the inequality $f_{\rm hom}(\xi)\leq \mathbb{E}[f(\cdot,0,\xi+h_{\xi})]$, we recall that due to Lemma \ref{l.f_differentiable} 
		\begin{equation*}
			|f(\w,x,\xi_1)-f(\w,x,\xi_0)|\leq C	\left((\Lambda(\w,x)^{\frac{1}{p}}+|\xi_0A(\w,x)|+|\xi_0A(\w,x)|)^{p-1}+\Lambda(\w,x)^{\frac{p-1}{p}}\right)|(\xi_1-\xi_0)A(\w,x)|.
		\end{equation*}
		Set $H_{\xi,\e}(\w)=\dashint_Q h_{\xi}(\tau_{y/\e}\w)\dy$. Due to \eqref{eq:ergodic_H} we may assume that $|H_{\xi,\e}(\w)|\leq 1$. Inserting the above local Lipschitz-estimate into \eqref{eq:corrector_good} and replacing all exponents $p-1$ by $p$ (without loss of generality $\Lambda\geq 1$) yields that
		\begin{align*}
			\dashint_Q f\left(\w,\tfrac{x}{\e},\xi+h_{\xi}(\tau_{x/\e}\w)-H_{\xi,\e}(\w)\right)\dx&\leq \dashint_Q f(\w,\tfrac{x}{\e},\xi+h_{\xi}(\tau_{x/\e}\w))\dx
			\\
			&\quad+\Bigg\{\dashint_Q \left(\Lambda(\w,\tfrac{x}{\e})+|A(\w,\tfrac{x}{\e})|^p +|(\xi+h_{\xi}(\tau_{x/\e}\w))A(\w,\tfrac{x}{\e})|^p\right)\dx
			\\
			&\quad\quad+\dashint_Q \Lambda(\w,\tfrac{x}{\e})\dx\Bigg\} \,C|H_{\xi,\e}(\w)|.
		\end{align*}
		Since $H_{\xi,\e}(\w)\to 0$ by \eqref{eq:ergodic_H}, we deduce from \eqref{eq:ergodic_energy}, the ergodic theorem applied to $|A|^p$ and $\Lambda$ as well as the lower bound in Assumption \ref{a.1} that
		\begin{equation*}
			\liminf_{\e\to 0}\dashint_Q f\left(\w,\tfrac{x}{\e},\xi+h_{\xi}(\tau_{x/\e}\w)-H_{\xi,\e}(\w)\right)\dx\leq \mathbb{E}[f(\cdot,0,\xi+h_{\xi})].
		\end{equation*}
		Combined with \eqref{eq:zero_average_gradient} this shows that \begin{equation}\label{eq:oneinequality}
			f_{\rm hom}\leq \mathbb{E}[f(\cdot,0,\xi+h_{\xi})].
		\end{equation}
		We still need to show the reverse inequality. By Lemma \ref{l.measurable}, we obtain for every $\e>0$ a measurable function $u_{\xi,\e}:\Omega\to W^{1,1}_0(Q/\e,\R^m)$ such that almost surely
		\begin{equation*}
			F_1(\w,\xi x+u_{\xi,\e}(\w),Q/\e)=\mu_{\xi}(\w,Q/\e):=\inf\{F_1(\w,u,Q_1/\e):\,u\in \xi x+W^{1,1}_0(Q/\e,\R^m)\}.
		\end{equation*}
		(Due to convexity, the almost sure existence of minimizers can be proven as in Lemma \ref{l.epsexistence}.) Next, we need to switch to a jointly measurable version. From \cite[Lemma 16, p. 196]{DS} we infer that there exist $\mathcal{F}\otimes\L^d$-measurable functions $v_{\xi,\e},b_{\xi,\e}:\Omega\times Q/\e\to\R$ such that $v_{\xi,\e}(\w,\cdot)=u_{\xi,\e}(\w)$ and $b_{\xi,\e}(\w,\cdot)=\nabla u_{\xi,\e}(\w)$ for a.e. $\w\in\Omega$. In particular, for a.e. $\w\in\Omega$ we have $v_{\xi,\e}(\w,\cdot)\in W_0^{1,1}(Q/\e,\R^m)$ and $\nabla v_{\xi,\e}(\w,\cdot)=b_{\xi,\e}(\w,\cdot)$. With a slight abuse of notation, we therefore write $b_{\xi,\e}=\nabla v_{\xi,\e}$. By \cite[Lemma 7.1]{JKO} we can assume that the set of $\w$, for which these properties hold, is invariant under the group action $\tau_{x}$ for almost all $x\in\R^d$. Finally, we extend $v_{\xi,\e}$ and $\nabla v_{\xi,\e}$ to $\Omega\times(\R^d\setminus Q/\e)$ by $0$. This extension is jointly measurable on $\Omega\times\R^d$. We now define $h_{\xi,\e}\in L^1(\Omega,\R^{m\times d})$ by
		\begin{equation*}
			h_{\xi,\e}(\w)=\dashint_{Q/\e}\nabla v_{\xi,\e}(\tau_{-y}\w,y)\dy.
		\end{equation*}
		Note that $h_{\xi,\e}$ is well defined and measurable due to the joint measurability of $\nabla v_{\xi,\e}$ and the joint measurability of the group action. To see that it is integrable, we can use Fubini's theorem and a change of variables in $\Omega$ to deduce that
		\begin{align*}
			\int_{\Omega}|\xi+h_{\xi,\e}(\w)|\,\mathrm{d}\mu&\leq \dashint_{Q/\e}\int_{\Omega}|\xi+\nabla v_{\xi,\e}(\tau_{-y}\w,y)|\,\mathrm{d}\mu\dy=\dashint_{Q/\e}\int_{\Omega}|\xi+\nabla v_{\xi,\e}(\w,y)|\,\mathrm{d}\mu\dy
			\\
			&=\int_{\Omega}\dashint_{Q/\e}|\xi+\nabla v_{\xi,\e}(\w,y)|\dy\,\mathrm{d}\mu
		\end{align*}
		The last term can be controlled via H\"older's inequality as at the beginning of this proof, using that for a.e. $\w\in\Omega$ the function $\nabla v_{\xi,\e}(\w,\cdot)$ is the gradient of an energy minimizer on $Q/\e$. We argue that $h_{\xi,\e}\in (F_{\rm pot}^1)^m$. Since for a.e. $\w\in\Omega$ the function $\nabla v_{\xi,\e}$ is the weak gradient of $u_{\xi,\e}(\w)\in W^{1,1}_0(Q/\e,\R^m)$, it follows from Fubini's theorem and a change of variables in $\Omega$ that
		\begin{equation*}
			\int_{\Omega}h_{\xi,\e}(\w)\,\mathrm{d}\mu=\int_{\Omega}\underbrace{\dashint_{Q/\e}\nabla v_{\xi,\e}(\w,y)\dy}_{=0 \text{ almost surely}}\,\mathrm{d}\mu=0.
		\end{equation*}
		Hence, it suffices to show that all rows of $x\mapsto h_{\xi,\e}(\tau _x\w)$ satisfy the curl-free condition of Definition~\ref{eq:def_F_pot}. To this end, we derive a suitable formula for the distributional derivative of this map. Fix $\theta\in C^{\infty}_c(\R^d)$ and an index $1\leq j\leq d$. Since $\nabla v_{\xi,\e}(\w,\cdot)=0$ on $\R^d\setminus (Q/\e)$, we can write 
		\begin{align*}
			\int_{\R^d}	h_{\xi,\e}(\tau_x\w)\partial_j\theta(x)\dx&=\int_{\R^d}\int_{\R^d}\frac{\nabla v_{\xi,\e}(\tau_{x-y}\w,y)}{|Q/\e|}\partial_j\theta(x)\dy\dx=\int_{\R^d}\int_{\R^d}\frac{\nabla v_{\xi,\e}(\tau_{z}\w,x-z)}{|Q/\e|}\partial_j\theta(x)\,\mathrm{d}z\dx
			\\
			&=\int_{\R^d}\int_{\R^d}\frac{\nabla v_{\xi,\e}(\tau_{z}\w,y)}{|Q/\e|}\partial_j\theta(y+z)\dy\,\mathrm{d}z.
		\end{align*}
		%		From the first to the second line we used Fubini's theorem (joint measurability follows from the joint measurability of $\nabla v_{\xi,\e}$ and of the group action $\tau$, while the integral on the left-hand side is almost surely finite since $x\mapsto |h_{\xi,\e}(\tau_x\w)|\in L^1_{\rm loc}(\R^d)$ for almost every $\w\in\Omega$ due to the integrability of $h_{\xi,\e}$). 
		In order to conclude that $h_{\xi,\e}\in (F_{\rm pot}^1)^m$, it suffices to note that for a.e. $\w\in\Omega$ and almost every $z\in\R^d$ the function $y\mapsto \nabla v_{\xi,\e}(\tau_z\w,y)$ is the gradient of the Sobolev function $u_{\xi,\e}(\tau_z\w)\in W^{1,1}_0(Q/\e,\R^m)$, so that the curl-free conditions follows. Now we can conclude the proof. Since $h_{\xi,\e}\in (F_{\rm pot}^1)^m$, it follows from Jensen's inequality that
		\begin{align*}
			\mathbb{E}[f(\cdot,0,\xi+h_{\xi})]&\leq \mathbb{E}[f(\cdot,0,\xi+h_{\xi,\e})]\leq \int_{\Omega}\dashint_{Q/\e}f(\w,0,\xi+\nabla  v_{\xi,\e}(\tau_{-y}\w,y))\dy\,\mathrm{d}\mu
			\\
			&=\int_{\Omega}\dashint_{Q/\e}f(\tau_{y}\w,0,\xi+\nabla  v_{\xi,\e}(\w,y))\dy\,\mathrm{d}\mu=\int_{\Omega}\dashint_{Q/\e}f(\w,y,\xi+\nabla  v_{\xi,\e}(\w,y))\dy\,\mathrm{d}\mu
			\\
			&=\frac{1}{|Q/\e|}\mathbb{E}[\mu_{\xi}(\w,Q/\e)],
		\end{align*}
		where we used the stationarity of $f$, and that $\nabla v_{\xi,\e}(\w,\cdot)=\nabla u_{\xi,\e}(\w)$ almost surely in the last step. Passing to the limit as $\e\to 0$, we obtain from $L^1$-convergence in the subadditive ergodic theorem (see \cite{Smythe} or \cite[Theorem 2.3, p. 203]{Krengel}) that
		\begin{equation*}
			\mathbb{E}[f(\cdot,0,\xi+h_{\xi})]\leq f_{\rm hom}(\xi),
		\end{equation*}
		which concludes the proof.
\end{proof} 	
	
We now prove that strict convexity of $\xi\mapsto f(\w,x,\xi)$ is inherited by $f_{\rm hom}$.
\begin{proposition}\label{p.f_hom_strictly_convex}
		In addition to Assumption \ref{a.1}, assume that the map $\xi\mapsto f(\w,x,\xi)$ is strictly convex. Then also $f_{\rm hom}$ is strictly convex.
\end{proposition}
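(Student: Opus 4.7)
The plan is to exploit the non-asymptotic variational formula for $f_{\rm hom}$ established in Lemma \ref{l.F_pot_formula}, which reduces the question to a single minimization over the linear space $(F_{\rm pot}^1)^m$. The strict convexity of $f_{\rm hom}$ will then follow essentially from the strict convexity of $f(\w,0,\cdot)$ combined with the zero-mean constraint encoded in $F_{\rm pot}^1$.

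Fix $\xi_0,\xi_1 \in \R^{m\times d}$ with $\xi_0\neq\xi_1$ and $t\in(0,1)$, and set $\xi_t=(1-t)\xi_0+t\xi_1$. By Lemma \ref{l.F_pot_formula} there exist minimizers $h_0,h_1 \in (F_{\rm pot}^1)^m$ with
\begin{equation*}
f_{\rm hom}(\xi_i)=\mathbb{E}[f(\cdot,0,\xi_i+h_i)]\quad\text{for }i=0,1.
\end{equation*}
Since $F_{\rm pot}^1$ is a linear subspace, the convex combination $h_t:=(1-t)h_0+t h_1$ belongs to $(F_{\rm pot}^1)^m$ and is admissible in the variational formula for $\xi_t$. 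Using that $\xi_t+h_t=(1-t)(\xi_0+h_0)+t(\xi_1+h_1)$ together with the convexity of $f$ yields
\begin{equation*}
f_{\rm hom}(\xi_t) \leq \mathbb{E}[f(\cdot,0,\xi_t+h_t)] \leq (1-t)f_{\rm hom}(\xi_0)+t f_{\rm hom}(\xi_1).
\end{equation*}

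The task reduces to showing that at least one of these two inequalities is strict. The second inequality is the key place to exploit strict convexity: on the set where $\xi_0+h_0\neq \xi_1+h_1$, the integrand satisfies the strict inequality pointwise. The crucial point is that this set has positive probability. Indeed, if instead $\xi_0+h_0=\xi_1+h_1$ almost surely, then $h_1-h_0=\xi_0-\xi_1$ as a constant random variable; taking expectations and using $\mathbb{E}[h_0]=\mathbb{E}[h_1]=0$ (definition of $F_{\rm pot}^1$) would give $0=\xi_0-\xi_1$, contradicting $\xi_0\neq\xi_1$. Hence $\P(\xi_0+h_0\neq \xi_1+h_1)>0$, and on this set strict convexity of $f(\w,0,\cdot)$ gives
\begin{equation*}
f(\w,0,\xi_t+h_t(\w))<(1-t)f(\w,0,\xi_0+h_0(\w))+t f(\w,0,\xi_1+h_1(\w)),
\end{equation*}
while elsewhere equality holds. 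Integrating over $\Omega$ yields strict inequality, which completes the proof.

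No major obstacle is anticipated: everything uses only the linearity and zero-mean property of $F_{\rm pot}^1$ (already established in Lemma \ref{l.F_pot}), existence of minimizers in the non-asymptotic formula (Lemma \ref{l.F_pot_formula}), and pointwise strict convexity of $f$. The only subtlety worth flagging is the step ensuring that $\xi_0+h_0$ and $\xi_1+h_1$ genuinely differ on a set of positive measure; this is handled cleanly by the zero-mean condition, and is in fact the reason why working with the potential formulation (rather than directly with the multi-cell formula) is convenient.
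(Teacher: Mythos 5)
Your proof is correct and follows essentially the same route as the paper: use the non-asymptotic formula of Lemma~\ref{l.F_pot_formula}, test with the convex combination of minimizing potentials, and observe that equality would force $\xi_0+h_{\xi_0}=\xi_1+h_{\xi_1}$ a.s., which contradicts $\xi_0\neq\xi_1$ after taking expectations since $F_{\rm pot}^1$ consists of mean-zero fields. Your extra sentence explicitly noting that the set $\{\xi_0+h_0\neq\xi_1+h_1\}$ has positive probability is a slightly more detailed phrasing of the same argument.
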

\begin{proof}
		Fix $\xi_0,\xi_1\in\R^{m\times d}$ and $t\in (0,1)$ and let $h_{\xi_0},h_{\xi_1}\in (F_{\rm pot}^1)^m$ be as in Lemma \ref{l.F_pot_formula}. From the same lemma we know that
		\begin{align*}
			f_{\rm hom}(t\xi_0+(1-t)\xi_1)&\leq \mathbb{E}[f(\cdot,0,t(\xi_0+h_{\xi_0})+(1-t)(\xi_1+h_{\xi_1}))]
			\\
			&\leq t \mathbb{E}[f(\cdot,0,\xi_0+h_{\xi_0})]+(1-t)\mathbb{E}[f(\cdot,0,\xi_1+h_{\xi_1})]=tf_{\rm hom}(\xi_0)+(1-t)f_{\rm hom}(\xi_1).	
		\end{align*}
		To have an equality, we need in particular that the second inequality is an equality. Since $\xi\mapsto f(\w,0,\xi)$ is strictly convex, the only possibility to have an inequality is when $\xi_0+h_{\xi_0}=\xi_1+h_{\xi_1}$ almost surely. Taking expectations, we deduce in this case that $\xi_0=\xi_1$ (recall that $\mathbb{E}[h]=0$ for all $h\in (F_{\rm pot}^1)^m$). Hence $f_{\rm hom}$ is strictly convex as claimed.
\end{proof}

In the next lemma we show the crucial property that $f_{\rm hom}$ is differentiable.
\begin{proposition}\label{p.differentiable}
Under Assumption \ref{a.2}, the function $f_{\rm hom}$ given by Lemma \ref{l.existence_f_hom} is continuously differentiable and the derivative satisfies the estimate
\begin{equation*}
	|\nabla f_{\rm hom}(\xi)|\leq C(1+|\xi|^{p-1})\quad\text{ for all }\xi\in\R^{m\times d}\text{ and some constant }C>0.
\end{equation*}
\end{proposition}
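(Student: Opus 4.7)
The plan is to exploit the non-asymptotic representation $f_{\rm hom}(\xi)=\mathbb{E}[f(\cdot,0,\xi+h_{\xi})]$ furnished by Lemma \ref{l.F_pot_formula} via an envelope argument. For any $\eta\in\R^{m\times d}$ and any $t\in\R$, the shifted potential $\xi+t\eta+h_{\xi}$ remains admissible in the variational formula for the point $\xi+t\eta$, hence
\begin{equation*}
f_{\rm hom}(\xi+t\eta)-f_{\rm hom}(\xi)\leq \mathbb{E}[f(\cdot,0,\xi+h_{\xi}+t\eta)-f(\cdot,0,\xi+h_{\xi})].
\end{equation*}
Dividing by $t>0$ and passing to the limit, the integrand converges pointwise to $\langle\partial_{\xi}f(\cdot,0,\xi+h_{\xi}),\eta\rangle$ by differentiability of $f$, and by \eqref{eq:f_quantitativeLipschitz} applied with $\xi_{0}=\xi+h_{\xi}$, $\xi_{1}=\xi+h_{\xi}+t\eta$ it is dominated, uniformly in $|t|\leq 1$, by the envelope
\begin{equation*}
C\left((\Lambda(\cdot,0)^{\frac{1}{p}}+|(\xi+h_{\xi})A(\cdot,0)|+|\eta A(\cdot,0)|)^{p-1}+\Lambda(\cdot,0)^{\frac{p-1}{p}}\right)|\eta A(\cdot,0)|.
\end{equation*}

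The main technical point is the integrability of this envelope. Young's and Hölder's inequalities with conjugate exponents $p$ and $p/(p-1)$, combined with the key a priori bound $\mathbb{E}[|(\xi+h_{\xi})A(\cdot,0)|^{p}]\leq c^{-1}f_{\rm hom}(\xi)<+\infty$ and the standing integrabilities $|A(\cdot,0)|^{p},\Lambda(\cdot,0)\in L^{1}(\Omega)$ from Assumption \ref{a.1}, dominate every cross term by a finite constant. Dominated convergence thus gives $f_{\rm hom}'(\xi;\eta)\leq \mathbb{E}[\langle\partial_{\xi}f(\cdot,0,\xi+h_{\xi}),\eta\rangle]$, where $f_{\rm hom}'(\xi;\cdot)$ denotes the one-sided directional derivative of the convex function $f_{\rm hom}$. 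Repeating the argument with $-\eta$ in place of $\eta$ yields $f_{\rm hom}'(\xi;-\eta)\leq -\mathbb{E}[\langle\partial_{\xi}f(\cdot,0,\xi+h_{\xi}),\eta\rangle]$. Adding the two inequalities gives $f_{\rm hom}'(\xi;\eta)+f_{\rm hom}'(\xi;-\eta)\leq 0$, and the reverse inequality always holds for a convex function by subadditivity of the one-sided derivative. Hence equality prevails, $f_{\rm hom}$ is Gâteaux differentiable at $\xi$ with
\begin{equation*}
\nabla f_{\rm hom}(\xi)=\mathbb{E}[\partial_{\xi}f(\cdot,0,\xi+h_{\xi})],
\end{equation*}
and the classical fact that Gâteaux differentiability of a convex function on the finite-dimensional space $\R^{m\times d}$ implies $C^{1}$-regularity yields $f_{\rm hom}\in C^{1}(\R^{m\times d})$.

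For the growth estimate one inserts the above representation into \eqref{eq:f_quantitativedifferentiable} and applies Hölder's inequality with exponents $p$ and $p/(p-1)$. The decisive term is $\mathbb{E}[|(\xi+h_{\xi})A|^{p-1}|A|]\leq \mathbb{E}[|(\xi+h_{\xi})A|^{p}]^{(p-1)/p}\mathbb{E}[|A|^{p}]^{1/p}$, which is controlled by $C(1+|\xi|^{p-1})$ thanks to the $p$-growth upper bound $f_{\rm hom}(\xi)\leq C_{0}|\xi|^{p}+C_{1}$ from Lemma \ref{l.existence_f_hom}; the purely $\Lambda$-dependent terms contribute only a bounded additive constant. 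We emphasize that the strict convexity of Assumption \ref{a.2} is not actually used in this argument, only convexity (required to invoke Lemma \ref{l.F_pot_formula}) and differentiability of $f$ in $\xi$; strict convexity will instead be crucial for uniqueness of $h_{\xi}$ and of the PDE solutions in the sequel.
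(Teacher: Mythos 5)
Your argument is correct and rests on exactly the same pillars as the paper's proof: the non-asymptotic formula of Lemma \ref{l.F_pot_formula}, the competitor $h_{\xi}$ evaluated at nearby slopes, and domination via the estimates of Lemma \ref{l.f_differentiable} (\eqref{eq:f_quantitativeLipschitz}--\eqref{eq:f_quantitativedifferentiable}) together with the a priori bound $\mathbb{E}[|(\xi+h_{\xi})A(\cdot,0)|^{p}]\leq c^{-1}f_{\rm hom}(\xi)$. The only deviations are in the endgame: the paper verifies the upper semidifferentiability criterion of \cite{BKK} with $\xi^{*}=\mathbb{E}[\partial_{\xi}f(\cdot,0,\xi+h_{\xi})]$ and invokes \cite[Theorem 4.65]{FoLe} for continuity, whereas you conclude Gâteaux (hence, by finite-dimensional convexity, continuous) differentiability from the symmetry of the one-sided directional derivatives; likewise, you obtain the bound $|\nabla f_{\rm hom}(\xi)|\leq C(1+|\xi|^{p-1})$ directly from the explicit representation of $\nabla f_{\rm hom}$ via \eqref{eq:f_quantitativedifferentiable} and Hölder, while the paper deduces it from the local Lipschitz estimate for (separately) convex functions with $p$-growth. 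Both routes are sound; yours has the small bonus of exhibiting the formula $\nabla f_{\rm hom}(\xi)=\mathbb{E}[\partial_{\xi}f(\cdot,0,\xi+h_{\xi})]$, and your closing remark that only convexity (not strict convexity) is used matches the structure of the paper's argument.
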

\begin{proof}
We first show that $f_{\rm hom}$ is differentiable. We know from the previous lemma that $f_{\rm hom}$ is (separately) convex. Due to \cite[Corollary 2.5]{BKK} it thus suffices to show that $f_{\rm hom}$ is upper semidifferentiable, that is, for all  $\xi\in\R^{m\times d}$ there exists $\xi^*\in\R^{m\times d}$ such that
\begin{equation*}
	\limsup_{\eta\to \xi}\frac{f_{\rm hom}(\eta)-f_{\rm hom}(\xi)-\langle\xi^*,\eta-\xi\rangle}{|\eta-\xi|}\leq 0.
\end{equation*}
We prove this property using the formula for $f_{\rm hom}$ given by Lemma \ref{l.F_pot_formula}. Given $\xi\in\R^{m\times d}$ and $h_{\xi}\in (F_{\rm pot}^1)^m$ as in Lemma \ref{l.F_pot_formula}, we define $\xi^*\in\R^{m\times d}$ as the matrix given by
\begin{equation*}
\langle \xi^*,v\rangle:=\mathbb{E}[\langle\partial_{\xi}f(\cdot,0,\xi+h_{\xi}),v\rangle].
\end{equation*}
Then for $\eta\in\R^{m\times d}$ we have that
\begin{equation}\label{eq:upper_diff}
\frac{f_{\rm hom}(\eta)-f_{\rm hom}(\xi)-\langle\xi^*,\eta-\xi\rangle}{|\eta-\xi|}\leq \mathbb{E}\left[\frac{f(\cdot,0,\eta+h_{\xi})-f(\cdot,0,\xi+h_{\xi})-\langle\partial_{\xi}f(\cdot,0,\xi+h_{\xi}),\eta-\xi\rangle}{|\eta-\xi|}\right].
\end{equation}
The integrand on the right-hand side tends to $0$ almost surely when $\eta\to\xi$ since $f(\w,0,\cdot)$ is differentiable in the point $\xi+h_{\xi}(\w)$. It remains to apply the dominated convergence theorem. Due to the mean value theorem and \eqref{eq:f_quantitativedifferentiable} we have for some random number $t=t(\w)\in [0,1]$ that
\begin{align*}
&\quad\left|\frac{f(\w,0,\eta+h_{\xi}(\w))-f(\w,0,\xi+h_{\xi}(\w))}{|\eta-\xi|}\right|= \left|\frac{\langle\partial_{\xi}f(\w,0,t\eta+(1-t)\xi+h_{\xi}(\w)),\eta-\xi\rangle}{|\eta-\xi|}\right|
\\
&\leq C \left((\Lambda(\w,0)^{\frac{1}{p}}+|(\eta-\xi)A(\w,0)|+|(\xi+h_{\xi}(\w))A(\w,0)|)^{p-1}+\Lambda(\w,0)^{\frac{p-1}{p}}\right)|A(\w,0)|
\\
&\leq C \left((\Lambda(\w,0)^{\frac{1}{p}}+|A(\w,0)|+|(\xi+h_{\xi}(\w))A(\w,0)|)^{p-1}+\Lambda(\w,0)^{\frac{p-1}{p}}\right)|A(\w,0)|,
\end{align*}
where we assumed that $|\eta-\xi|\leq 1$ from the second to the last line.
To see that the term in the last line is integrable, one applies H\"older's inequality with exponents $p$ and $p/(p-1)$ and uses the integrability of $\Lambda(\w,0)$, $\|A(\w,0)\|^p$, and of $|(\xi+h_{\xi}(\w))A(\w,0)|^p$. For $t=0$ we obtain the bound for the remaining term in\eqref{eq:upper_diff}. Hence the dominated convergence theorem yields that $f_{\rm hom}$ is upper semidifferentiable and therefore also differentiable.

By \cite[Theorem 4.65]{FoLe} the derivative of a (separately) convex function is continuous. The claimed bound on the derivative follows from the estimate
\begin{equation*}
	|f_{\rm hom}(\xi_1)-f_{\rm hom}(\xi_2)|\leq C(1+|\xi_1|^{p-1}+|\xi_2|^{p-1})|\xi_1-\xi_2|,
\end{equation*}
which holds due to the (separate) convexity of $f_{\rm hom}$ and its $p$-growth from above (cf. \cite[Proposition 4.64]{FoLe}).
\end{proof} 

Having established the strict convexity and the differentiability of $f_{\rm hom}$, it remains to prove the existence of minimizers for fixed $\e$ and that they satisfy the associated Euler-Lagrange equation of $F_{\e}$. 
\begin{lemma}\label{l.epsexistence}
Under Assumption \ref{a.2}, for every $\e>0$, $g\in W^{1,\infty}_{\rm loc}(\R^d,\R^m)$ and $f_{\e}\in L^d(D,\R^m)$ there exists a unique minimizer of the problem
\begin{equation*}
	\inf \left\{\int_D f(\w,\tfrac{x}{\e},\nabla u(x))\dx-\int_D f_{\e}(x)\cdot u(x)\dx:\,u\in g+W_0^{1,1}(D,\R^m)\right\},
\end{equation*}
which is the  unique weak solution of the PDE \eqref{eq:epsPDE} in the affine energy space
\begin{equation*}
\mathcal{A}_{g,\e}(\w):=\{u\in g+W_0^{1,1}(D,\R^m):\,\int_D|\nabla u(x)A(\w,x)|^p\dx<+\infty\}.
\end{equation*}
\end{lemma}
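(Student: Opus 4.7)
The plan is to apply the direct method of the calculus of variations on the affine energy space $\mathcal{A}_{g,\e}(\w)$, then use the strict convexity and differentiability of $f(\w,x,\cdot)$ from Assumption~\ref{a.2} to derive the Euler--Lagrange equation and identify minimizers with weak solutions in the sense of Remark~\ref{r.weaksol}(a).

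\textbf{Existence and uniqueness of the minimizer.} First note that $g\in\mathcal{A}_{g,\e}(\w)$ almost surely: since $\nabla g\in L^\infty(D,\R^{m\times d})$ and by Remark~\ref{r.on_assump}(d) the maps $|A(\w,\cdot)|^p$ and $\Lambda(\w,\cdot)$ are a.s.\ in $L^1_{\mathrm{loc}}(\R^d)$, we have $F_\e(\w,g,D)<+\infty$, so the infimum is finite. Coercivity is obtained as in the proof of Theorem~\ref{thm:Dirichlet_and_forces}: H\"older's inequality with weight $|A(\w,\cdot/\e)^{-1}|$ (whose $p/(p-1)$-power is a.s.\ integrable on the bounded set $D$) yields $\|\nabla u\|_{L^1(D)}^p\leq C_{\w,\e}F_\e(\w,u,D)$, which combined with Poincar\'e's inequality on $g+W^{1,1}_0(D,\R^m)$ and the absorption of the linear term via the Sobolev embedding, H\"older and Young (as in Section~\ref{s.bd+forces}), provides a $W^{1,1}$-bound for any minimizing sequence $u_n$. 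Lemma~\ref{l.compactness} then gives a subsequence converging weakly in $W^{1,1}$ to some $u_\e\in g+W^{1,p}_0(D,\R^m)$, and the duality argument in that lemma shows that $\nabla u_\e A(\w,\cdot/\e)\in L^p$, hence $u_\e\in\mathcal{A}_{g,\e}(\w)$. Convexity of $\xi\mapsto f(\w,x,\xi)$ makes $F_\e(\w,\cdot,D)$ convex and lower semicontinuous on $L^1(D,\R^m)$, hence weakly $W^{1,1}$-lower semicontinuous, while the linear term is continuous under weak $W^{1,1}$-convergence thanks to the compact embedding $W^{1,1}\hookrightarrow L^{d/(d-1)}$ of Theorem~\ref{thm.embedding} and $f_\e\in L^d(D,\R^m)$. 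Therefore $u_\e$ is a minimizer. Uniqueness then follows from the strict convexity of $f(\w,x,\cdot)$: two distinct admissible functions share the trace $g$ on $\partial D$, so their gradients differ on a set of positive measure and $F_{\e,f_\e,g}(\w,\cdot,D)$ is strictly convex on $\mathcal{A}_{g,\e}(\w)$.

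\textbf{Euler--Lagrange equation and equivalence with weak solutions.} Fix $\varphi\in\mathcal{A}_{0,\e}(\w)$ and set $\Phi(t):=F_\e(\w,u_\e+t\varphi,D)-\int_D f_\e\cdot(u_\e+t\varphi)\dx$. Using Assumption~\ref{a.2} and \eqref{eq:f_quantitativeLipschitz} with $\xi_0=\nabla u_\e+s\nabla\varphi$ and $\xi_1=\nabla u_\e+t\nabla\varphi$, the difference quotients of the integrand in $t$ are controlled, for $|t|\leq 1$, by
\begin{equation*}
C\bigl(\Lambda(\w,x/\e)^{(p-1)/p}+(\Lambda(\w,x/\e)^{1/p}+|\nabla u_\e A(\w,x/\e)|+|\nabla\varphi A(\w,x/\e)|)^{p-1}\bigr)|\nabla\varphi A(\w,x/\e)|.
\end{equation*}
H\"older's inequality with exponents $p$ and $p/(p-1)$ bounds the $L^1(D)$-norm of this majorant by the finite quantities $\int_D\Lambda(\w,x/\e)\dx$, $F_\e(\w,u_\e,D)$ and $\int_D|\nabla\varphi A(\w,x/\e)|^p\dx$. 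Dominated convergence then justifies differentiating $\Phi$ under the integral, and $\Phi'(0)=0$ gives the weak formulation of Remark~\ref{r.weaksol}(a). Conversely, if $v\in\mathcal{A}_{g,\e}(\w)$ is any weak solution, convexity of $f(\w,x,\cdot)$ yields, for every $u\in\mathcal{A}_{g,\e}(\w)$,
\begin{equation*}
F_{\e,f_\e,g}(\w,u,D)-F_{\e,f_\e,g}(\w,v,D)\geq \int_D\bigl(\langle\partial_\xi f(\w,x/\e,\nabla v),\nabla(u-v)\rangle-f_\e\cdot(u-v)\bigr)\dx=0,
\end{equation*}
where the last equality follows from testing the weak equation for $v$ against $u-v\in\mathcal{A}_{0,\e}(\w)$. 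Hence every weak solution is a minimizer and by the first step coincides with $u_\e$.

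\textbf{Main obstacle.} The delicate point is to keep track of the correct admissible class throughout: the natural space for both coercivity and the Euler--Lagrange equation is $\mathcal{A}_{g,\e}(\w)$, which is strictly smaller than $g+W^{1,1}_0(D,\R^m)$. That the minimizer extracted from an $L^1$-convergent minimizing sequence actually lies in $\mathcal{A}_{g,\e}(\w)$ relies on the duality argument in the second part of Lemma~\ref{l.compactness}, while allowing arbitrary $\varphi\in\mathcal{A}_{0,\e}(\w)$ as a test function --- rather than just $C^\infty_c(D,\R^m)$ --- requires the quantitative local Lipschitz estimate of Lemma~\ref{l.f_differentiable} to dominate $\partial_\xi f(\w,x/\e,\nabla u_\e)\cdot\nabla\varphi$ by an integrable function.
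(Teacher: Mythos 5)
Your proposal is correct and follows essentially the same route as the paper: direct method for existence, strict convexity for uniqueness, Gateaux differentiability via dominated convergence together with the local Lipschitz estimate of Lemma~\ref{l.f_differentiable}, and convexity to show any weak solution is a global minimizer. Two small inaccuracies are worth fixing: Lemma~\ref{l.compactness} is stated for sequences indexed by $\e\to 0$, not for a minimizing sequence at fixed $\e$, so you cannot invoke it verbatim (the paper says explicitly ``as in the proof of Lemma~\ref{l.compactness}'' and uses that for fixed $\e$ the map $x\mapsto|A(\w,x/\e)^{-1}|^{p/(p-1)}$ is a single integrable function, hence trivially equi-integrable); and the embedding $W^{1,1}\hookrightarrow L^{d/(d-1)}$ is not compact — Theorem~\ref{thm.embedding} establishes complete continuity, which suffices to pass to the limit in the linear term. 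Also, to place the weak limit in $\mathcal{A}_{g,\e}(\w)$ one should appeal to the weak $W^{1,1}$-lower semicontinuity of the convex functional $u\mapsto\int_D|\nabla u\,A(\w,\cdot/\e)|^p\dx$ (or simply to the lower bound for $f$ once $u$ is known to be a minimizer of finite energy), rather than to the duality argument of Lemma~\ref{l.compactness}, which produces $\nabla u\in L^p$ but not directly $\nabla u\,A\in L^p$.
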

\begin{proof}
The function $g$ is admissible for the infimum problem and has finite energy. Now consider a minimizing sequence $u_n\in g+W^{1,1}_0(D,\R^m)$. Using the same estimates as in the proof of Theorem \ref{thm:Dirichlet_and_forces} in Section \ref{s.bd+forces}, it follows that 
\begin{equation*}
	\sup_{n\in\N}F_{\e}(\w,u_n,D)<+\infty.
\end{equation*}
Combining H\"older's inequality with the fact that for fixed $\e>0$ the function $x\mapsto |A(\w,\tfrac{x}{\e})^{-1}|^{\frac{p}{p-1}}$ is equi-integrable, one shows as in the proof of Lemma \ref{l.compactness} that $\nabla u_n$ is bounded and equi-integrable. Due to Poincaré's inequality it follows that, up to a subsequence, $u_n$ converges to some $u\in g+W^{1,1}_0(D,\R^m)$ weakly in $W^{1,1}(D,\R^m)$ and by Theorem \ref{thm.embedding} also strongly in $L^{d/(d-1)}(D,\R^m)$, which allows to pass to the limit in the linear term. The functional $u\mapsto F_{\e}(\w,u,D)$ is lower semicontinuous with respect to strong convergence in $W^{1,1}(D,\R^m)$ due to Fatou's lemma and by convexity also weakly lower semicontinuous. Hence, by the direct method of the calculus of variations, $u$ is a minimizer. Due to strict convexity of $\xi\mapsto f(\w,\tfrac{x}{\e},\xi)$, the minimizer under Dirichlet boundary conditions is unique. To prove the assertion about the Euler-Lagrange equation, we fix $\varphi\in W_0^{1,1}(D,\R^m)$ such that $\int_D|A(\w,\tfrac{x}{\e})\nabla \varphi(x)|^p<+\infty$. An elementary calculation based on the dominated convergence theorem, the bound \eqref{eq:f_quantitativedifferentiable} and H\"older's inequality, implies
%and compute the Gat\'eaux-derivative of $F_{\e}$ at $u$ in the direction $\varphi$. For $t\in (-1,1)\setminus \{0\}$, by the fundamental theorem of calculus and Assumption \ref{a.2} it holds that
%\begin{align*}
%&\quad \left|\frac{F_{\e}(\w,u+t\varphi,D)-F_{\e}(\w,u,D)}{t}-\int_D\partial_{\xi}f(\w,\tfrac{x}{\e},\nabla u(x))\nabla\varphi(x)\dx\right|
%\\
%&\leq\int_D \int_0^1|\langle\partial_{\xi}f(\w,\tfrac{x}{\e},\nabla u(x)+st\nabla \varphi(x))-\partial_{\xi}f(\w,\tfrac{x}{\e},\nabla u(x)),\nabla\varphi(x)\rangle|\,\mathrm{d}s\dx
%\\
%&\leq C \int_D \left(1+|A(\w,\tfrac{x}{\e})\nabla u(x)|+|A(\w,\tfrac{x}{\e})\nabla\varphi(x)|\right)^{p-1-\alpha}t^{\alpha}|A(\w,\tfrac{x}{\e})\nabla\varphi(x)|^{1+\alpha}\dx
%\\
%&\leq Ct^{\alpha}\int_D 1+|A(\w,x)\nabla u(x)|^p+|A(\w,\tfrac{x}{\e})\nabla\varphi(x)|^p\dx.
%\end{align*}
%The last integral is finite, so letting $t\to 0$ we deduce 
that $F_{\e}(\w,\cdot,D)$ is Gat\'eaux-differentiable at $u$ in the direction $\varphi$ with derivative
\begin{equation*}
	\delta F_{\e}(\w,u,D)\varphi=\int_D \partial_{\xi}f(\w,\tfrac{x}{\e},\nabla u(x))\nabla\varphi(x)\dx.
\end{equation*}
Since the term $\int_D f_{\e}u\dx$ is linear and continuous on $W^{1,1}(D,\R^m)$, it follows that the minimizer solves \eqref{eq:epsPDE} in the claimed weak form. The solution of the PDE is unique in $\mathcal{A}_{g,\e}(\w)$ since any other solution would also be a global minimizer due to the convexity of the energy (which is infinite outside $\mathcal{A}_{g,\e}(\w)$).
\end{proof}

\begin{proof}[Proof of Theorem \ref{thm.PDEs}]
The claimed properties of $f_{\rm hom}$ follow from Proposition \ref{p.f_hom_strictly_convex} and Proposition \ref{p.differentiable}. The existence and uniqueness of solutions of the PDE at the $\e$-level is proven in Lemma \ref{l.epsexistence}, the existence and uniqueness of solutions (via the existence and uniqueness of minimizers) of the homogenized PDE follow by standard methods using the properties of $\nabla f_{\rm hom}$ stated in Lemma \ref{p.differentiable} and the almost sure convergence of solutions is a consequence of the convergence of minimizers under $\Gamma$-convergence, which holds due to Theorem \ref{thm:Dirichlet_and_forces}.	
\end{proof}
%As we argue now, condition B1) and B2) imply our growth conditions. Indeed, the fundamental theorem of calculus yields that
%\begin{align*}
%	f(\w,x,\xi)&=f(\w,x,0)+\int_0^1A(\w,x,t\xi)\xi\,\mathrm{d}t\overset{2)}{\leq}f(\w,x,0)+C\int_0^1\lambda(\w,x)(1+|t\xi|^{p})\,\mathrm{d}t
%	\\
%	&\leq f(\w,x,0)+C\lambda(\w,x)(1+|\xi|^p)
%\end{align*}
%and in the same way
%\begin{align*}
%	f(\w,x,\xi)&=f(\w,x,0)+\int_0^1A(\w,x,t\xi)\xi\,\mathrm{d}t\overset{3)}{\geq}f(\w,x,0)+c\int_0^1\lambda(\w,x)|t\xi|^{p}\,\mathrm{d}t
%	\\
%	&\geq f(\w,x,0)+c\lambda(\w,x)|\xi|^p.
%\end{align*}
%Hence, if we set $f(\w,x,0)=0$ (which does not affect $A$), we obtain our growth conditions. However, we also stress that the assumption of $A$ being the derivative of a function poses a restriction on the curl of $A$ with respect to $\xi$. 

%
% 

\appendix

\section{Differentiability of $f_{\rm hom}$ without convexity assumptions}\label{app:0}
In this section we show that the differentiability of $f_{\rm hom}$ can be obtained without the formula given by Lemma \ref{l.F_pot_formula}, but with the additional assumptions \eqref{eq:quantitativeC1} and \eqref{eq:quantitativeBound}.
\begin{lemma}\label{l.app_diff}
In addition to Assumption \ref{a.1}, assume that the map $\xi\mapsto f(\w,x,\xi)$ is differentiable and that its derivative satisfies \eqref{eq:quantitativeC1} and \eqref{eq:quantitativeBound}. Then $f_{\rm hom}$ given by Theorem \ref{thm.Gamma_pure} is continuously differentiable and the derivative satisfies the bound $|\nabla f_{\rm hom}(\xi)|\leq C(1+|\xi|^{p-1})$.
\end{lemma}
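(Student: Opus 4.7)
The plan is to prove upper semidifferentiability of $f_{\rm hom}$ at every $\xi\in\R^{m\times d}$ with a quantitative H\"older modulus of order $|\eta|^{1+\alpha}$, and then to combine it with the symmetric statement at the shifted base-point $\xi+\eta$ to pin down the gradient. Fix $\xi$, the cube $Q=(-1/2,1/2)^d$ and, for $t>0$ and $\w$ in a set of full probability, a near-minimizer $u_{t,\xi,\w}\in\xi x+W^{1,1}_0(tQ,\R^m)$ of $\mu_{\xi}(\w,tQ)$ with $F_1(\w,u_{t,\xi,\w},tQ)\leq\mu_\xi(\w,tQ)+1$. Since $u_{t,\xi,\w}+\eta x$ is admissible for $\mu_{\xi+\eta}(\w,tQ)$, a first-order Taylor expansion of $f(\w,\cdot,\cdot)$ in the last variable yields
\begin{equation*}
\frac{\mu_{\xi+\eta}(\w,tQ)-\mu_{\xi}(\w,tQ)}{|tQ|}\leq\bigl\langle\overline\xi^*_{t,\w}(\xi),\eta\bigr\rangle+\frac{R_{t,\w,\xi}(\eta)+1}{|tQ|},
\end{equation*}
where $\overline\xi^*_{t,\w}(\xi):=\dashint_{tQ}\partial_\xi f(\w,x,\nabla u_{t,\xi,\w}(x))\dx$ and $R_{t,\w,\xi}(\eta)$ is the second-order remainder.

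The technical core will be to control both quantities uniformly in $t$. Integrating \eqref{eq:quantitativeC1} along the segment joining $\nabla u_{t,\xi,\w}$ and $\nabla u_{t,\xi,\w}+\eta$ and using $|\eta A|\leq|A||\eta|$, H\"older's inequality with conjugate exponents $p/(1+\alpha)$ and $p/(p-1-\alpha)$ will give
\begin{equation*}
\frac{|R_{t,\w,\xi}(\eta)|}{|tQ|}\leq C|\eta|^{1+\alpha}\Bigl(\dashint_{tQ}|A|^p\Bigr)^{\!(1+\alpha)/p}\!\Bigl(\dashint_{tQ}(\Lambda^{1/p}+|\nabla u_{t,\xi,\w}A|+|\eta A|)^p\Bigr)^{\!(p-1-\alpha)/p}.
\end{equation*}
The first average is bounded by the ergodic theorem~\ref{thm.additiv_ergodic} applied to $|A(\cdot,0)|^p$, while the second one is controlled by combining the lower bound in (A2) with Lemma~\ref{l.existence_f_hom}, giving $\dashint_{tQ}|\nabla u_{t,\xi,\w}A|^p\leq c^{-1}(\mu_\xi(\w,tQ)+1)/|tQ|\to c^{-1}f_{\rm hom}(\xi)$. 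Thus $|R_{t,\w,\xi}(\eta)|/|tQ|\leq C(|\xi|)|\eta|^{1+\alpha}$ uniformly for $|\eta|\leq 1$. By the same ingredients, combining \eqref{eq:quantitativeBound} with \eqref{eq:quantitativeC1} at the pair $(\nabla u_{t,\xi,\w},0)$ and H\"older with exponents $p$ and $p/(p-1)$ will yield $|\overline\xi^*_{t,\w}(\xi)|\leq C(1+|\xi|^{p-1})$ for $t$ large, delivering also the claimed $p-1$ growth of the eventual gradient.

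Since $\{\overline\xi^*_{t,\w}(\xi)\}_t$ lies in the finite-dimensional space $\R^{m\times d}$, we can extract a subsequential limit $\xi^*(\w,\xi)$; passing to the limit in the displayed inequality via $\mu_\zeta(\w,tQ)/|tQ|\to f_{\rm hom}(\zeta)$ (Lemma~\ref{l.existence_f_hom}) will produce
\begin{equation*}
f_{\rm hom}(\xi+\eta)-f_{\rm hom}(\xi)\leq\langle\xi^*(\w,\xi),\eta\rangle+C(|\xi|)|\eta|^{1+\alpha}\quad\forall\,\eta.
\end{equation*}
The symmetric construction with base point $\xi+\eta$ and perturbation $-\eta$ (taking $u_{t,\xi+\eta,\w}-\eta x$ as competitor for $\mu_\xi$) produces a matching lower bound $f_{\rm hom}(\xi+\eta)-f_{\rm hom}(\xi)\geq\langle\widetilde\xi^*(\w,\xi+\eta),\eta\rangle-C|\eta|^{1+\alpha}$ for some limit point $\widetilde\xi^*(\w,\xi+\eta)$. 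The main obstacle is that, without the convex structure exploited in Lemma~\ref{l.F_pot_formula}, the limit points $\xi^*(\w,\xi)$ and $\widetilde\xi^*(\w,\xi+\eta)$ need not coincide a priori. However, combining the two inequalities forces $\langle\widetilde\xi^*(\w,\xi+\eta)-\xi^*(\w,\xi),\eta\rangle\leq 2C|\eta|^{1+\alpha}$, and this relation, together with the locally uniform bound established above and a careful analysis as $\eta\to 0$ along arbitrary directions (in the spirit of \cite{ADMZ}), will force all subsequential limit points to agree. The common value $\nabla f_{\rm hom}(\xi)$ is deterministic (since $f_{\rm hom}$ is) and continuous in $\xi$ thanks to the H\"older modulus, and the two-sided bounds then give $f_{\rm hom}(\xi+\eta)=f_{\rm hom}(\xi)+\langle\nabla f_{\rm hom}(\xi),\eta\rangle+o(|\eta|)$, completing the argument.
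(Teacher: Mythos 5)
Your estimate leading to
\begin{equation*}
f_{\rm hom}(\xi+\eta)-f_{\rm hom}(\xi)\leq\langle\xi^*(\xi),\eta\rangle+C|\eta|^{1+\alpha}
\end{equation*}
via near-minimizers of $\mu_\xi$, the fundamental theorem of calculus, \eqref{eq:quantitativeC1}, H\"older, and the ergodic theorem matches the paper's argument for upper semidifferentiability and is essentially correct. Likewise the bound $|\xi^*_\e|\leq C(1+|\xi|^{p-1})$ from \eqref{eq:quantitativeBound}.

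The gap is in the second half. To pass from upper semidifferentiability to genuine differentiability you propose a two-sided scheme: run the symmetric construction at $\xi+\eta$ to obtain a lower bound with a different vector $\widetilde\xi^*(\xi+\eta)$, and then claim that ``a careful analysis as $\eta\to 0$ along arbitrary directions'' forces all subsequential limit points to agree. This step is not filled in and does not obviously work. Combining the two bounds only gives $\langle\widetilde\xi^*(\xi+\eta)-\xi^*(\xi),\eta/|\eta|\rangle\leq 2C|\eta|^{\alpha}$, which is one-sided in the direction $\eta/|\eta|$ and involves the quantity $\widetilde\xi^*(\xi+\eta)$, which is itself a subsequential limit varying with $\eta$; you have no control on its convergence (or even boundedness locally uniformly in $\eta$ unless you re-run the argument carefully), so you cannot close the loop directly. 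The paper circumvents this entirely by observing that $f_{\rm hom}$ is quasiconvex (from general $\Gamma$-convergence theory, see the proof of Proposition~\ref{p.lb}), hence separately convex, and then invoking \cite[Corollary~2.5]{BKK}: a separately convex function that is upper semidifferentiable at a point is automatically differentiable there. This is exactly what Proposition~\ref{p.differentiable} does, and Lemma~\ref{l.app_diff} only re-derives the upper semidifferentiability by the multi-cell formula instead of by the single-cell formula of Lemma~\ref{l.F_pot_formula}. Without the separate-convexity observation and the BKK theorem, your remaining argument is incomplete; continuity of $\nabla f_{\rm hom}$ and the growth bound are also read off in the paper from separate convexity via \cite[Theorem~4.65]{FoLe} and \cite[Proposition~4.64]{FoLe}, rather than from the H\"older modulus as you suggest.
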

\begin{proof}
The function $f_{\rm hom}$ is quasiconvex and finite, so in particular it is separately convex. Hence the only point to be adapted in the proof of Proposition \ref{p.differentiable} is the upper semidifferentiability of $f_{\rm hom}$, that is, for all  $\xi\in\R^{m\times d}$ there exists $\xi^*\in\R^{m\times d}$ such that
\begin{equation*}
	\limsup_{\eta\to \xi}\frac{f_{\rm hom}(\eta)-f_{\rm hom}(\xi)-\langle\xi^*,\eta-\xi\rangle}{|\eta-\xi|}\leq 0.
\end{equation*}
Fix $\xi\in\R^{m\times d}$ and given $\e>0$ we choose a function $u_{\xi,\e}\in W^{1,1}_0(Q/\e,\R^m)$ such that
\begin{equation*}
	\dashint_{Q/\e}f(\w,x,\xi+\nabla u_{\xi,\e})\dx\leq \frac{1}{|Q/\e|}\mu_{\xi}(\w,Q/\e)+\e,
\end{equation*}
where $\mu_{\xi}(\w,Q/\e)$ is given by Lemma \ref{l.existence_f_hom}. We then set $\xi^*_{\e}\in\R^{m\times d}$ as the matrix defined by
\begin{equation*}
	\langle\xi_{\e}^*,v\rangle:=	\dashint_{Q/\e}\langle\partial_{\xi}f(\w,x,\xi+\nabla u_{\xi,\e}),v\rangle\dx.
\end{equation*}
Let $\eta\in\R^{m\times d}$ be such that $|\eta-\xi|\leq 1$. Then it holds that
\begin{align*}
	\frac{1}{|Q/\e|}\mu_{\eta}(\w,Q_/\e)-\frac{1}{|Q/\e|}\mu_{\xi}(\w,Q/\e)-\langle\xi_{\e}^*,\eta-\xi\rangle&\leq \dashint_{Q/\e}f(\w,x,\eta+\nabla u_{\xi,\e})-f(\w,x,\xi+\nabla u_{\xi,\e})\dx
	\\
	&\quad-\dashint_{Q/\e}\langle\partial_{\xi}f(\w,x,\xi+\nabla u_{\xi,\e}),\eta-\xi\rangle\dx+\e.
\end{align*}
To bound the right-hand side, we use the fundamental theorem of calculus and \eqref{eq:quantitativeC1} to deduce that
\begin{align*}
	&\quad\frac{1}{|Q/\e|}\mu_{\eta}(\w,Q_/\e)-\frac{1}{|Q/\e|}\mu_{\xi}(\w,Q/\e)-\langle\xi_{\e}^*,\eta-\xi\rangle-\e
	\\
	&\leq \dashint_{Q/\e}\int_0^1\langle\partial_{\xi}f(\w,x,t(\eta-\xi)+\xi+\nabla u_{\xi,\e})-\partial_{\xi}f(\w,x,\xi+\nabla u_{\xi,\e}),\eta-\xi\rangle\,\mathrm{d}t\dx
	\\
	&\leq C\dashint_{Q/\e}\int_0^1(\Lambda(\w,x)^{\frac{1}{p}}+t|(\eta-\xi)A(\w,x)|+2|(\xi+\nabla u_{\xi,\e})A(\w,x)|)^{p-1-\alpha}t^{\alpha}(|A(\w,x)||\eta-\xi|)^{1+\alpha}\dx.
\end{align*}
Since $t\in [0,1]$ and $p-1\geq\alpha>0$, we may replace $t$ by $1$ and absorb the factor $2$ in $C$. Then H\"older's inequality and the bound $|\eta-\xi|\leq 1$ yield that
\begin{align*}
	&\quad\frac{1}{|Q/\e|}\mu_{\eta}(\w,Q_/\e)-\frac{1}{|Q/\e|}\mu_{\xi}(\w,Q/\e)-\langle\xi_{\e}^*,\eta-\xi\rangle-\e
	\\
	&\leq C\left(\dashint_{Q/\e}(\Lambda(\w,x)^{\frac{1}{p}}+|A(\w,x)|+|(\xi+\nabla u_{\xi,\e})A(\w,x)|)^{p}\dx\right)^{\frac{p-1-\alpha}{p}}\left(\dashint_{Q/\e}(|A(\w,x)||\eta-\xi|)^p\dx\right)^{\frac{1+\alpha}{p}}
	\\
	&\leq C\left(\dashint_{Q/\e}\Lambda(\w,x)+|A(\w,x)|^p\dx+\frac{1}{|Q/\e|}\mu_{\xi}(\w,Q/\e)+\e\right)^{\frac{p-1-\alpha}{p}}\left(\dashint_{Q/\e}|A(\w,x)|^p\dx\right)^{\frac{1+\alpha}{p}}|\eta-\xi|^{1+\alpha}.
\end{align*}
Letting $\e\to 0$, we deduce from Lemma \ref{l.existence_f_hom} and the ergodic theorem \ref{thm.additiv_ergodic} that
\begin{equation*}
	f_{\rm hom}(\eta)-f_{\rm hom}(\xi)-\liminf_{\e\to 0}\langle\xi_{\e}^*,\eta-\xi\rangle\leq C\Big(\mathbb{E}[\Lambda(\cdot,0)+|A(\cdot,0)|^p]+f_{\rm hom}(\xi)\Big)^{\frac{p-1-\alpha}{p}}\mathbb{E}[|A(\cdot,0)|^p]^{\frac{1+\alpha}{p}}|\eta-\xi|^{1+\alpha}
\end{equation*}
and since $\alpha>0$, the upper semidifferentiability of $f_{\rm hom}$ follows once we show that $\xi_{\e}^*$ is relatively compact in $\R^{m\times d}$. To show boundedness, we use \eqref{eq:quantitativeC1} and \eqref{eq:quantitativeBound} to infer that
\begin{align*}
	|\xi_{\e}^*|&\leq\dashint_{Q/\e}|\partial_{\xi}f(\w,x,\xi+\nabla u_{\xi,\e})|\dx\leq \dashint_{Q/\e}|\partial_{\xi}f(\w,x,\xi+\nabla u_{\xi,\e})-\partial_{\xi}f(\w,x,0)|+\Lambda(\w,x)\dx
	\\
	&\leq C\dashint_{Q/\e}|A(\w,x)|\Big(\Lambda(\w,x)^{\frac{1}{p}}+|(\xi+\nabla u_{\xi,\e})A(\w,x)|\Big)^{p-1-\alpha}|(\xi+\nabla u_{\xi,\e})A(\w,x)|^{\alpha}+\Lambda(\w,x)\dx
	\\
	&\leq C\dashint_{Q/\e}|A(\w,x)|\Big(\Lambda(\w,x)^{\frac{1}{p}}+|(\xi+\nabla u_{\xi,\e})A(\w,x)|\Big)^{p-1}+\Lambda(\w,x)\dx
	\\
	&\leq C\left(\dashint_{Q/\e}|A(\w,x)|^p\dx\right)^{\frac{1}{p}}\left(\dashint_{Q/\e}\Lambda(\w,x)+|(\xi+\nabla u_{\xi,\e})A(\w,x)|^{p}\dx\right)^{\frac{p-1}{p}}+C\dashint_{Q/\e}\Lambda(\w,x),
\end{align*}
where we used H\"older's inequality to obtain the last line. Using Assumption \ref{a.1}, the almost minimality of $u_{\xi,\e}$, and the ergodic theorem \ref{thm.additiv_ergodic}, we deduce that
\begin{equation*}
	\limsup_{\e\to 0}|\xi_{\e}^*|\leq C \Big(\mathbb{E}[\Lambda(\cdot,0)]+f_{\rm hom}(\xi)\Big)^{\frac{p-1}{p}}\mathbb{E}[|A(\cdot,0)|^p]^{\frac{1}{p}}+C\,\mathbb{E}[\Lambda(\cdot,0)],
\end{equation*}
which implies the boundedness of $\xi_{\e}^*$ and thus the differentiability of $f_{\rm hom}$.
\end{proof} 

\section{Complete continuity of the embedding $W^{1,1}\hookrightarrow L^{d/(d-1)}$}\label{app:1}
We show that the non-compact Sobolev embedding $W^{1,1}\hookrightarrow L^{d/(d-1)}$ is completely continuous. 
\begin{theorem}\label{thm.embedding}
Let $d,m\in\N$ and $d\geq 2$. If $(u_n)_{n\in\N}\subset W^{1,1}(\R^d,\R^m)$ is a sequence such that $u_n\rightharpoonup u$ in $W^{1,1}(\R^d,\R^m)$, then $u_n\to u$ in $L^{d/(d-1)}(\R^d,\R^m)$. The same result holds true when $\R^d$ is replaced by an extension domain $O$, i.e., in case there exists a bounded, linear operator $E:W^{1,1}(O,\R^m)\to W^{1,1}(\R^d,\R^m)$ such that $E(u)=u$ a.e. on $O$.	
\end{theorem}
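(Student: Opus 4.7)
The plan is to show that $(|u_n|^{d/(d-1)})_{n\in\N}$ is uniformly integrable on $\R^d$ and combine this with a.e.-convergence along a subsequence (coming from the Rellich--Kondrachov theorem via a diagonal extraction) through Vitali's convergence theorem. The case of a general extension domain $O$ reduces to the case $O=\R^d$: weak convergence $u_n\rightharpoonup u$ in $W^{1,1}(O,\R^m)$ implies $Eu_n\rightharpoonup Eu$ weakly in $W^{1,1}(\R^d,\R^m)$, and the restrictions of $Eu_n$ to $O$ agree with $u_n$, so strong $L^{d/(d-1)}$-convergence of the extensions implies strong $L^{d/(d-1)}$-convergence of the originals.

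Assume now $O=\R^d$. By the Dunford--Pettis theorem, the weak convergence in $W^{1,1}(\R^d,\R^m)$ implies that $(u_n)$ and $(\nabla u_n)$ are uniformly integrable on $\R^d$ (in particular bounded in $W^{1,1}$ and tight at infinity). The Sobolev inequality $\|v\|_{L^{d/(d-1)}(\R^d)}\leq C\|\nabla v\|_{L^1(\R^d)}$ for $v\in W^{1,1}(\R^d,\R^m)$ then yields a uniform bound $M:=\sup_n\|u_n\|_{L^{d/(d-1)}(\R^d)}<\infty$. To rule out concentration at large values, for $k>0$ consider the $1$-Lipschitz truncation $T_ku=u\min\{1,k/|u|\}$ (with $T_k0=0$). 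By the chain rule in Sobolev spaces, $\nabla(u_n-T_ku_n)=0$ a.e.\ on $\{|u_n|\leq k\}$ and $|\nabla(u_n-T_ku_n)|\leq C|\nabla u_n|$ elsewhere, so applying the Sobolev inequality to $u_n-T_ku_n$ yields
\begin{equation*}
\|u_n-T_ku_n\|_{L^{d/(d-1)}(\R^d)}\leq C\int_{\{|u_n|>k\}}|\nabla u_n(x)|\dx.
\end{equation*}
Chebyshev's inequality gives $|\{|u_n|>k\}|\leq M^{d/(d-1)}k^{-d/(d-1)}$, which tends to $0$ uniformly in $n$ as $k\to\infty$, so the uniform integrability of $(\nabla u_n)$ implies $\sup_n\|u_n-T_ku_n\|_{L^{d/(d-1)}(\R^d)}\to 0$ as $k\to\infty$. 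Since $|u_n-T_ku_n|=(|u_n|-k)^+\geq |u_n|/2$ on $\{|u_n|>2k\}$, this gives the concentration control
\begin{equation*}
\lim_{k\to\infty}\sup_{n\in\N}\int_{\{|u_n|>2k\}}|u_n(x)|^{d/(d-1)}\dx=0.
\end{equation*}

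For tightness at infinity, choose $\phi_R\in C^\infty(\R^d,[0,1])$ with $\phi_R\equiv 0$ on $B_R$, $\phi_R\equiv 1$ on $\R^d\setminus B_{2R}$ and $|\nabla\phi_R|\leq C/R$, and apply the Sobolev inequality to $\phi_Ru_n$:
\begin{equation*}
\Big(\int_{\R^d\setminus B_{2R}}|u_n|^{d/(d-1)}\dx\Big)^{(d-1)/d}\leq C\int_{\R^d\setminus B_R}|\nabla u_n|\dx+\frac{C}{R}\|u_n\|_{L^1(\R^d)}.
\end{equation*}
The right-hand side tends to $0$ uniformly in $n$ as $R\to\infty$, by the tightness of $(\nabla u_n)$ and the uniform $L^1$-bound on $(u_n)$. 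Together with the uniform $L^{d/(d-1)}$-bound, the concentration and tightness estimates imply uniform integrability of $(|u_n|^{d/(d-1)})$ on $\R^d$.

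To conclude, Rellich--Kondrachov on balls $B_k$, together with a standard diagonal extraction, produces a subsequence $u_{n_j}\to u$ pointwise a.e.\ on $\R^d$. Vitali's convergence theorem applied to $|u_{n_j}-u|^{d/(d-1)}$ (whose family is uniformly integrable by the step above, as $u\in L^{d/(d-1)}(\R^d,\R^m)$ by Fatou) gives $u_{n_j}\to u$ in $L^{d/(d-1)}(\R^d,\R^m)$. Since every subsequence of $(u_n)$ admits such a further subsequence with the same limit $u$, the full sequence converges. The key obstacle is precisely the concentration estimate at the \emph{critical} Sobolev exponent: non-compactness phenomena (mass escaping to infinity or concentrating) that are unavoidable for $W^{1,p}\hookrightarrow L^{p^*}$ when $p>1$ are excluded here only because weak convergence in $L^1$ automatically provides uniform integrability, a feature specific to the case $p=1$.
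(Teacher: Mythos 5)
Your proof is correct and rests on the same core ingredients as the paper's: Dunford--Pettis for equi-integrability and tightness of $(\nabla u_n)$, a truncation combined with the Sobolev inequality to control concentration at large values, a cutoff near infinity to control escape of mass, and Vitali's theorem after extracting an a.e.-convergent subsequence via Rellich--Kondrachov. The two genuine organizational differences are worth pointing out. First, for the tightness at infinity the paper constructs $N$ nested annuli with cutoffs of $O(1)$ gradient and invokes an averaging argument to select a good annulus, whereas you use a single cutoff $\phi_R$ with $|\nabla\phi_R|\leq C/R$ so that the term $\tfrac{C}{R}\|u_n\|_{L^1}$ decays automatically; this is simpler and avoids the averaging. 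Second, the paper's endgame applies Vitali only to the bounded truncations $u_{n,k}$, controls the tails $u_n-u_{n,k}$ in $W^{1,1}$, and finishes by the triangle inequality in $L^{d/(d-1)}$, whereas you assemble the concentration and tightness estimates into uniform integrability of $|u_n|^{d/(d-1)}$ directly and apply Vitali once; the net content is the same but your version is a bit more streamlined. You also keep everything vector-valued using the $1$-Lipschitz nearest-point projection $T_k$, while the paper reduces to the scalar case componentwise; both are fine, though the paper's componentwise reduction sidesteps the (mild) vector-valued chain-rule technicalities that your $T_k$ incurs, in particular the fact that $\nabla(u_n-T_ku_n)=0$ a.e. on the level set $\{|u_n|=k\}$ where $T_k$ fails to be differentiable.
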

\begin{remark}
	Every bounded open set with Lipschitz boundary is an extension domain. For such sets it suffices to prove the equi-integrability of $|u_n|^{d/(d-1)}$. This has already been shown in \cite[Lemma A.3]{DLFS}.
\end{remark}
\begin{proof}[Proof of Theorem \ref{thm.embedding}]
Arguing separately for each component, it suffices to prove the scalar case $m=1$. Consider $u_n,u_0\in W^{1,1}(\R^d)$ such that $ u_n\rightharpoonup u_0$ in $W^{1,1}(\R^d)$. From the Gagliardo-Nirenberg-Sobolev inequality we infer hat $u_0\in L^{d/(d-1)}(\R^d)$. To show strong convergence in this space, we first show that the sequence $|u_n|^{d/(d-1)}$ does not lose mass at infinity. Since $\nabla u_{n}\rightharpoonup \nabla u_0$ in $L^1(\R^d,\R^d)$, it follows from the Dunford-Pettis theorem that for every $\e>0$ there exists a set $A_{\e}\subset\R^d$ with finite measure such that
\begin{equation*}
	\sup_{n\in\N}\int_{\R^d\setminus A_{\e}}|\nabla u_{n}|\dx<\e.
\end{equation*}
Although it might be known to experts, we argue that we can take the sets $A_{\e}$ to be compact. Indeed, due to the inner regularity of the Lebesgue measure there exists $K_{\e}\subset A_{\e}$ compact such that $|A_{\e}\setminus K_{\e}|\leq\rho(\e)$, where $\rho(\e)>0$ is chosen such that the equi-integrability of $|\nabla u_n|$ implies that
\begin{equation*}
	\sup_{n\in\N}\int_{A_{\e}\setminus K_{\e}}|\nabla u_n|\dx\leq\e.
\end{equation*}
We conclude that
\begin{equation*}
	\sup_{n\in\N}\int_{\R^d\setminus K_{\e}}|\nabla u_{n}|\dx<2\e.
\end{equation*}
Since $K_{\e}$ is compact, we find a radius $R=R_{\e}>0$ such that $K_{\e}\subset B_R$. For any $N\in\N$ and $1\leq i\leq N$ we define the radii $r_i=R+i$ and consider a cut-off function $\varphi\in C_c^{\infty}(\R^d,[0,1])$ such that $\varphi\equiv 1$ on $B_{r_{i-1}}$, ${\rm supp}(\varphi_i)\subset B_{r_{i}}$ and $\|\nabla\varphi_i\|_{\infty}\leq 2$. We then define the function 
\begin{equation*}
	u_{n,i}=(1-\varphi_i)u_n\in W^{1,1}(\R^d).
\end{equation*}
By the product rule it holds that $\nabla u_{n,i}=(1-\varphi_i)\nabla u_n-\nabla \varphi_iu_n$. The Gagliardo-Nirenberg-Sobolev inequality then yields that
\begin{align*}
	\left(\int_{\R^d\setminus B_{r_i}}|u_{n}|^{d/(d-1)}\dx\right)^{\frac{d-1}{d}}&\leq \left(\int_{\R^d}|u_{n,i}|^{d/(d-1)}\dx\right)^{\frac{d-1}{d}}\leq C\int_{\R^d}|\nabla u_{n,i}|\dx
	\\
	&\leq C\int_{\R^d\setminus B_{r_{i}}}|\nabla u_n|\dx+2C\int_{B_{r_{i}}\setminus B_{r_{i-1}}}|u_n|\dx
	\\
	&\leq C\int_{\R^d\setminus B_{R}}|\nabla u_n|\dx+2C\int_{B_{r_{i}}\setminus B_{r_{i-1}}}|u_n|\dx. 
\end{align*}
Since $B_R\supset K_{\e}$, the penultimate integral above can be bounded by $2C\e$ uniformly in $n$. Moreover, since the sets $(B_{r_{i+1}}\setminus B_{r_i})_{i=1}^N$ are pairwise disjoint, it follows that
\begin{equation*}
	\left(\int_{\R^d\setminus B_{r_N}}|u_{n}|^{d/(d-1)}\dx\right)^{\frac{d-1}{d}}\leq \frac{1}{N}\sum_{i=1}^N \left(\int_{\R^d\setminus B_{r_i}}|u_{n}|^{d/(d-1)}\dx\right)^{\frac{d-1}{d}}\leq 2C\e+\frac{2C}{N}\int_{\R^d}|u_n|\dx.
\end{equation*}
Since $u_n$ is a bounded sequence in $L^1(\R^d)$, it follows that for given $\e>0$ we can find a number $N=N_{\e}$ and a corresponding ball $B_{\e}=B_{R+N_{\e}}$ such that
\begin{equation}\label{tightness}
	\sup_{n\in\N}\int_{\R^d\setminus B_{\e}}|u_n|^{d/(d-1)}\dx\leq (4C\e)^{\frac{d}{d-1}}\leq \e,
\end{equation}
where we assumed that $\e\ll 1$ for the last estimate (recall that $d\geq 2$). Due to the compact embedding $W^{1,1}(B)\hookrightarrow\hookrightarrow L^1(B)$ for any ball $B\subset\R^d$, we deduce that $u_n\to u$ locally in measure on $\R^d$. Combined with \eqref{tightness} it follows that $u_n\to u$ in measure on $\R^d$. Given $k\in\N$, we consider the truncated sequence 
\begin{equation*}
	u_{n,k}=\min\{\max\{-k,u_n\},k\}.
\end{equation*} 
Then a.e. on $\R^d$ it holds that $\nabla u_{n,k}=\nabla u_n\chi_{\{|u_n|< k\}}$, so that $u_{n,k}\in W^{1,1}(\R^d)$. Moreover, pointwise it holds that $|u_{n,k}|\leq |u_n|$. Therefore also $|u_{n,k}|^{d/(d-1)}$ satisfies \eqref{tightness}. Since the truncation operator $x\mapsto \min\{\max\{-k,x\},k\}$ is $1$-Lipschitz, for any $\delta>0$ it holds that
\begin{equation*}
	\lim_{n\to +\infty}|\{|u_{n,k}-u_{0,k}|>\delta\}|\leq \lim_{n\to +\infty}|\{|u_n-u_0|>\delta\}|=0
\end{equation*}
and we conclude that $u_{n,k}$ converges to $u_{0,k}$ in measure. Moreover, $|u_{n,k}|^{d/(d-1)}$ is bounded in $L^{\infty}(\R^d)$. Hence it is equi-integrable and we conclude from Vitali's convergence theorem \cite[Theorem 2.24]{FoLe} that $u_{n,k}\to u_k$ in $L^{d/(d-1)}(\R^d)$. Next, note that
\begin{equation*}
	\int_{\R^d}|u_n-u_{n,k}|+|\nabla u_n-\nabla u_{n,k}|\dx\leq \int_{\{|u_n|> k\}}(|u_n|-k)+|\nabla u_n|\dx\leq \int_{\{|u_n|> k\}}|u_n|+|\nabla u_n|\dx.
\end{equation*}
Since $|\{|u_n|\geq k\}|\leq k^{-1}\|u_n\|_{L^1(\R^d)}$, the equi-integrability of $u_n$ and $\nabla u_n$ imply that
\begin{equation*}
	\lim_{k\to +\infty}\sup_{n\in\N}\int_{\R^d}|u_n-u_{n,k}|+|\nabla u_n-\nabla u_{n,k}|\dx=0.
\end{equation*}
The triangle inequality and the Gagliardo-Nirenberg-Sobolev inequality yield that
\begin{align*}
	\|u-u_n\|_{L^{d/(d-1)}(\R^d)}&\leq\|u-u_{k}\|_{L^{d/(d-1)}(\R^d)}+\|u_k-u_{n,k}\|_{L^{d/(d-1)}(\R^d)}+\|u_{n,k}-u_{n}\|_{L^{d/(d-1)}(\R^d)}
	\\
	&\leq \|u-u_{k}\|_{L^{d/(d-1)}(\R^d)}+\|u_k-u_{n,k}\|_{L^{d/(d-1)}(\R^d)}+C\sup_{n\in\N}\|u_{n,k}-u_{n}\|_{W^{1,1}(\R^d)}.
\end{align*}
Letting first $n\to +\infty$ and then $k\to +\infty$ we deduce that
\begin{equation*}
	\limsup_{n\to +\infty}\|u-u_n\|_{L^{d/(d-1)}(\R^d)}=0.
\end{equation*}
This proves the claim for $D=\R^d$. If $D\subset\R^d$ is an extension domain, then by definition there exists a bounded, linear extension operator $E:W^{1,1}(D)\to W^{1,1}(\R^d)$, which is also weakly continuous. Hence the claim follows from the continuity of the restriction map $L^{d/(d-1)}(\R^d)\to L^{d/(d-1)}(D)$.	
\end{proof}

\section{Measurability}\label{app:2}
Here we establish the following lemma:
\begin{lemma}\label{l.measurable}
Under Assumption \ref{a.1} the function $\w\mapsto \mu_{\xi}(\w,O)$ defined in Lemma \ref{l.existence_f_hom} is measurable for every open, bounded set $O\subset\R^d$. Moreover, if for almost every $\w\in\Omega$ there exists a minimizer for the problem defining $\mu_{\xi}(\w,O)$, then there exists a measurable function $u:\Omega\to \xi x+W^{1,1}_0(O,\R^m)$ such that $F_1(\w,u(\w),O)=\mu_{\xi}(\w,O)$ for almost every $\w\in\Omega$.
\end{lemma}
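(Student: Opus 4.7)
The plan is to prove the measurability statement via three classical ingredients: (i) joint measurability of the integral functional $F_1$ on $\Omega \times X$, where $X := \xi x + W^{1,1}_0(O, \R^m)$; (ii) the measurable projection theorem to obtain measurability of the infimum $\mu_\xi(\cdot, O)$; and (iii) the Aumann--von Neumann selection theorem to produce a measurable minimizer. Throughout, $X$ equipped with its $W^{1,1}$-metric is a Polish space, as an affine translate of the closed separable subspace $W^{1,1}_0(O, \R^m)$.

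The main step is (i), which I would carry out via a Moreau--Yosida regularization. Define
\[
f_n(\w, x, \xi) := \inf_{\eta \in \R^{m\times d}} \bigl\{ f(\w, x, \eta) + n\,|\xi-\eta|\bigr\}.
\]
Since $f(\w, x, \cdot)$ is lower semicontinuous and dominated by the finite map $\xi \mapsto |\xi A(\w,x)|^p + \Lambda(\w,x)$, standard arguments give $f_n \uparrow f$ pointwise and each $f_n$ is $n$-Lipschitz in $\xi$. Joint measurability of $f_n$ follows from the measurable projection theorem applied to its sublevel sets $\{f_n < t\}$, which are projections of the jointly measurable sets $\{(\w, x, \xi, \eta) : f(\w, x, \eta) + n|\xi-\eta| < t\}$ onto $\Omega \times \R^d \times \R^{m\times d}$. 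Each $f_n$ is therefore a Carath\'eodory integrand, and the associated functional $F_1^{(n)}(\w, u, O) := \int_O f_n(\w, x, \nabla u(x))\,\mathrm{d}x$ is measurable in $\w$ for each fixed $u$ by Fubini and is $n$-Lipschitz in $u$ with respect to the $W^{1,1}$-norm for each fixed $\w$; together with the separability of $X$, this yields joint $\mathcal{F} \otimes \mathcal{B}(X)$-measurability of $F_1^{(n)}$. Monotone convergence then gives $F_1 = \sup_n F_1^{(n)}$, so $F_1$ itself is jointly measurable.

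For step (ii), one writes
\[
\{\w \in \Omega : \mu_\xi(\w, O) < t\} = \mathrm{proj}_\Omega\bigl(\{(\w, u) \in \Omega \times X : F_1(\w, u, O) < t\}\bigr);
\]
the right-hand set belongs to $\mathcal{F} \otimes \mathcal{B}(X)$ by (i), and the completeness of $(\Omega, \mathcal{F}, \mathbb{P})$ together with the Polish structure of $X$ imply, via the measurable projection theorem, that the left-hand set lies in $\mathcal{F}$. This gives the measurability of $\mu_{\xi}(\cdot, O)$. For step (iii), under the hypothesis that minimizers exist almost surely, the multifunction $\Gamma(\w) := \{u \in X : F_1(\w, u, O) = \mu_\xi(\w, O)\}$ has graph
\[
\mathrm{gr}(\Gamma) = \{(\w, u) \in \Omega \times X : F_1(\w, u, O) \leq \mu_\xi(\w, O)\} \in \mathcal{F} \otimes \mathcal{B}(X)
\]
and nonempty values almost surely. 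The Aumann--von Neumann selection theorem then furnishes an $\mathcal{F}$-measurable $u : \Omega \to X$ with $F_1(\w, u(\w), O) = \mu_\xi(\w, O)$ almost surely.

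The principal technical hurdle is step (i): lower semicontinuity of $f(\w, x, \cdot)$ alone blocks the naive reduction to a countable infimum, since the infimum of a merely lower semicontinuous function over a dense subset can strictly exceed its true infimum. The Moreau--Yosida device is precisely what allows one to pass to Carath\'eodory integrands while preserving monotone convergence to $F_1$. Steps (ii) and (iii) are then textbook applications of the classical measurable projection and Aumann--von Neumann selection theorems.
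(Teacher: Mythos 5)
Your proposal follows essentially the same strategy as the paper's own proof: Moreau--Yosida regularization to obtain an increasing sequence of Carath\'eodory integrands, joint measurability of the regularized energies (and hence of $F_1$ by monotone convergence), and finally measurable projection for the infimum together with Aumann-type selection for the minimizer --- the paper simply packages the latter two into a single auxiliary result, Lemma \ref{l.oninf}, based on the epigraph multifunction. The one caveat is in your step (i): the projection eliminating $\eta$ only lands in the \emph{completion} of $\mathcal{F}\otimes\mathcal{L}^d\otimes\mathcal{B}^{m\times d}$, since completeness of the individual factors does not give completeness of the product, so you must complete $\mathcal{F}\otimes\mathcal{L}^d$ with respect to $\mathbb{P}\times|\cdot|$ before declaring $f_n$ jointly measurable; the paper handles this explicitly, and also projects only over $\eta$ for each fixed $\xi$, recovering joint measurability of $f_k$ from its Lipschitz dependence on $\xi$ rather than projecting over the full triple product.
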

We first show a more general result for the measurability of infimum-values and minimizers that is well-known in some special cases.
\begin{lemma}\label{l.oninf}
Let $Y$ be a complete, separable metric space and $(T,\mathcal{A},m)$ be a complete measure space. Assume that $F:T\times Y\to \R\cup\{+\infty\}$ is $\mathcal{A}\otimes \mathcal{B}(Y)$-measurable and that $y\mapsto F(t,y)$ is lower semicontinuous and not constantly $+\infty$ for every $t\in T$. Then the function $t\mapsto\inf_{y\in Y} F(t,y)$ is $\mathcal{A}$-measurable. Moreover, if for every $t\in T$ there exists a minimizer of $y\mapsto F(t,y)$, then there exists an $\mathcal{A}\hbox{-}\mathcal{B}(Y)$-measurable function $y_{\rm min}:T\to Y$ such that $y_{\rm min}(t)\in{\rm argmin}\,F(t,\cdot)$.
\end{lemma}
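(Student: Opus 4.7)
My plan is to deduce both statements from the measurable projection theorem applied in the Polish space $Y$, crucially exploiting the completeness of the measure space $(T,\mathcal{A},m)$.

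First I would prove measurability of $g(t):=\inf_{y\in Y}F(t,y)$. For every $c\in\R$ one has the identity
\begin{equation*}
\{t\in T:g(t)<c\}=\pi_T\bigl(\{(t,y)\in T\times Y:F(t,y)<c\}\bigr),
\end{equation*}
with $\pi_T$ the projection onto $T$. Since $F$ is jointly $\mathcal{A}\otimes\mathcal{B}(Y)$-measurable, the set in parentheses belongs to $\mathcal{A}\otimes\mathcal{B}(Y)$, and the measurable projection theorem then places its $\pi_T$-image in the $m$-completion of $\mathcal{A}$; completeness identifies this with $\mathcal{A}$. A countable intersection handles the set $\{g=-\infty\}$, so $g$ is $\mathcal{A}$-measurable.

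For the selection statement, assume that $\mathrm{argmin}\,F(t,\cdot)\neq\emptyset$ for every $t$ and consider the graph
\begin{equation*}
M:=\{(t,y)\in T\times Y:F(t,y)\leq g(t)\},
\end{equation*}
which lies in $\mathcal{A}\otimes\mathcal{B}(Y)$ thanks to the measurability of $F$ and of $g\circ\pi_T$ established in the previous step. The section $M_t$ coincides with $\mathrm{argmin}\,F(t,\cdot)$; it is nonempty by assumption and closed by lower semicontinuity of $F(t,\cdot)$. An application of the Jankov--von Neumann uniformization theorem (or, equivalently, of Kuratowski--Ryll-Nardzewski applied to the multifunction $t\mapsto M_t$, whose weak measurability follows again from the measurable projection theorem via the identity $\{t:M_t\cap U\neq\emptyset\}=\pi_T(M\cap(T\times U))$ for open $U\subset Y$) then furnishes an $\mathcal{A}\hbox{-}\mathcal{B}(Y)$-measurable map $y_{\min}\colon T\to Y$ with $(t,y_{\min}(t))\in M$ for every $t$, which is the desired selection.

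The main (and essentially only) non-elementary ingredient is the measurable projection theorem, whose hypotheses---Polishness of $Y$ and completeness of $(T,\mathcal{A},m)$---are both in force. Everything else amounts to routine bookkeeping, and for the application to Lemma \ref{l.measurable} one takes $T=\Omega$, $Y=\xi x+W^{1,1}_0(O,\R^m)$ (a separable Banach space, hence Polish), and $F(\w,u)=F_1(\w,u,O)$, which is lower semicontinuous in $u$ by Fatou's lemma combined with (A1), and jointly measurable as a by-product of the measurability assumption on $f$.
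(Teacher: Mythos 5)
Your argument is correct, but it takes a route different from the paper's. For measurability of the infimum, you project the sublevel sets $\{F<c\}$ directly; the paper instead follows Rockafellar--Wets and works with the epigraph multifunction $t\mapsto{\rm epi}\,F(t,\cdot)$, passes to its $\R$-valued projection $\Gamma(t)$, uses \cite[Remark 6.11]{FoLe} (measurable graph $\Rightarrow$ weak measurability of a closed-valued multifunction, itself a consequence of the projection theorem) to get weak measurability of $\Gamma$, then upgrades $\overline{\Gamma}$ to strong measurability and uses the identity $\{\inf F(t,\cdot)\le\alpha\}=\{\alpha\in\overline{\Gamma(t)}\}$. Your version is shorter and isolates the single nontrivial input (the measurable projection theorem) more cleanly; the paper's version has the advantage of making the "normal integrand" structure explicit, which is the framework the rest of the appendix leans on. For the selection, the paper applies Aumann's theorem directly to the multifunction $M(t)={\rm argmin}\,F(t,\cdot)$, which requires only that its graph be in $\mathcal{A}\otimes\mathcal{B}(Y)$, whereas you first upgrade to weak measurability of $t\mapsto M_t$ via the projection theorem and then invoke Kuratowski--Ryll-Nardzewski. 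Both work since the sections are nonempty and (by lsc) closed; Aumann is marginally more economical here because it does not need the intermediate weak-measurability step. (Your parenthetical appeal to Jankov--von Neumann as stated for Polish $T$ doesn't apply verbatim, since $T$ is an abstract measure space, but the KRN alternative you give is correct and covers the gap.) One small caveat common to both proofs: the measurable projection theorem and Aumann's theorem require $\sigma$-finiteness of $m$ in addition to completeness, which the lemma does not state but which holds in the paper's application to a probability space. Finally, your closing remark that joint measurability of $F_1(\w,u,O)$ in Lemma~\ref{l.measurable} follows "as a by-product" is too quick: the paper needs the Moreau--Yosida regularization there precisely to obtain a Carath\'eodory structure before joint measurability of the integral functional can be concluded.
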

\begin{proof}
It will be convenient to consider the epigraph of $F$ defined as the multifunction 
\begin{equation*}
	t\mapsto {\rm epi}\, F(t,\cdot):=\{(y,\alpha)\in Y\times\R:\,F(t,y)\leq\alpha\}.
\end{equation*}
The lower semicontinuity assumption on $F$ in the second variable shows that ${\rm epi}\, F(t,\cdot)$ is closed-valued. Moreover, it is non-empty by the finiteness assumption on $y\mapsto F(t,y)$. Since  $F$ is jointly measurable, the graph of ${\rm epi}\,F(t,\cdot)$ defined by
\begin{equation*}
	{\rm Gr}({\rm epi}\,F(t,\cdot))=\{(t,y,\alpha)\in T\times Y\times\R:\,F(t,y)\leq\alpha\}
\end{equation*}
belongs to $\mathcal{A}\otimes\mathcal{B}(Y)\otimes\mathcal{B}(\R)$. Due to the completeness of $\mathcal{A}$ with respect to $m$ and the properties of $Y$ we can apply \cite[Remark 6.11]{FoLe} and conclude that ${\rm epi}\,F(t,\cdot)$ is weakly measurable in the sense that
\begin{equation*}
	\{t\in T:\,{\rm epi}\,F(t,\cdot)\cap O\neq\emptyset\}
\end{equation*}
is $\mathcal{A}$-measurable for every open set $O\subset Y\times \R$. We now follow \cite[Theorem 14.37]{RoWe98} in order to prove the measurability of the infimum value. Denote by $\Pi:Y\times\R\to\R$ the projection map defined by $\Pi(y,\alpha)=\alpha$. We introduce a multifunction $\Gamma:T\to \mathcal{P}(\R)$ setting $\Gamma(t)=\Pi ({\rm epi}\,F(t,\cdot))$. Let $U\subset\R$ be open. Then
\begin{equation*}
\{t\in T:\,\Pi({\rm epi}\,F(t,\cdot))\cap U\neq\emptyset\}=\{t\in T:\,{\rm epi}\,F(t,\cdot)\cap (Y\times U)\neq\emptyset\},
\end{equation*}
which is $\mathcal{A}$-measurable since $Y\times U$ is open in $Y\times\R$. Hence also $\Gamma$ is weakly measurable. Since $\overline{\Gamma(t)}\cap U\neq\emptyset$ if and only if $\Gamma(t)\cap U\neq\emptyset$, it follows that also the closure of $\Gamma$ is weakly measurable. Finally, since $\overline{\Gamma(t)}\subset\R$, we know that it is even strongly measurable, that is, 
\begin{equation*}
	\{t\in T:\,\overline{\Gamma(t)}\cap C\neq\emptyset\}
\end{equation*}
is $\mathcal{A}$-measurable for every closed set $C\subset \R$ (cf. \cite[Remark 6.4]{FoLe}). An elementary argument shows that
\begin{equation*}
\{t\in T:\, \inf_{y\in Y}F(t,y)\leq\alpha\}=\{t\in T:\,\alpha\in \overline{\Gamma(t)}\}=\{t\in T:\, \overline{\Gamma(t)}\cap \{\alpha\}\neq\emptyset\}.
\end{equation*}
The set on the right-hand side is $\mathcal{A}$-measurable. Hence also $t\mapsto \inf_{y\in Y}F(t,y)$ is $\mathcal{A}$-measurable. To obtain the measurable selection of minimizers, define the multifunction $M:T\to\mathcal{P}(Y)$ by
\begin{equation*}
M(t)=\{y\in Y:\,F(t,y)=\inf_{y\in Y}F(t,y)\}.
\end{equation*}
By assumption, $M(t)\neq\emptyset$ and due to lower semicontinuity $M(t)$ is closed for all $t\in T$. Moreover, by the measurability of the infimum value the graph of $M$ is $\mathcal{A}\otimes\mathcal{B}(Y)$-measurable. Since $T$ is complete, Aumann's measurable selection Theorem (see \cite[Theorem 6.10]{FoLe}) implies the existence of a measurable selection of minimizers.
\end{proof}
With the above lemma at hand we can now prove the measurability of the process $\mu_{\xi}(\w,A)$.
\begin{proof}[Proof of Lemma \ref{l.measurable}]
Since we assume $\Omega$ to be a complete probability space, we can set $f(\w,x,\xi)=|\xi|^p$ on the null set where $|A(\w,\cdot)|^p+\Lambda(\w,\cdot)$ is not locally integrable and this modification does not affect measurability. We have to transfer the measurability properties of the integrand to the energy. To this end, we first regularize the integrand in $\xi$. Given $k\in\N$, define the Moreau-Yosida-regularization of $f$ by
\begin{equation*}
f_k(\w,x,\xi)=\inf_{\zeta\in\R^{m\times n}}\{f(\w,x,\zeta)+k|\zeta-\xi|\}.
\end{equation*}
It is well-known that $f_k$ is $k$-Lipschitz in the last variable. In order to apply Lemma \ref{l.oninf} we need to complete the product $\sigma$-algebra $\mathcal{F}\otimes\mathcal{L}^d$ with respect to the product measure $\mathbb{P}\times |\cdot|$. Denote the completed $\sigma$-algebra by $\overline{\mathcal{F}\otimes\mathcal{L}^d}$. Considering $\xi$ as a parameter, we deduce from Lemma \ref{l.oninf} and Assumption \ref{a.1} that the function $(\w,x)\to f_k(\w,x,\xi)$ is measurable with respect to $\overline{\mathcal{F}\otimes\mathcal{L}^d}$. By a well-known argument for Carathéodory-functions it follows that $f_k$ is $\overline{\mathcal{F}\otimes\mathcal{L}^d}\otimes\mathcal{B}(\R^{m\times d})$-measurable. Hence for any $u\in \xi x+W_0^{1,1}(O,\R^m)$ the function $(\w,x)\mapsto f_k(\w,x,\nabla u(x))$ is $\overline{\mathcal{F}\otimes\mathcal{L}^d}$-measurable and, by Tonelli's theorem in the form of \cite[Theorem 1.121]{FoLe}, we can define $F_k(\w,u):\Omega\times \left(\xi x+W_0^{1,1}(O,\R^m)\right)\to [0,+\infty)$ by
\begin{equation*}
F^k(\w,u)=\int_O f_k(\w,x,\nabla u(x))\,\dx.
\end{equation*}
The integral is indeed finite since the nonnegativity of $f$ and the Lipschitz continuity of $f_k$ imply that
\begin{equation}\label{eq:lineargrowth}
0\leq f_k(\w,x,\xi)\leq  f_k(\w,x,0)+k|\xi|\leq f(\w,x,0)+k|\xi|\leq \Lambda(\w,x)+k|\xi|.	
\end{equation}
(Recall that $\Lambda(\w,\cdot)$ is locally integrable on $\R^d$.)
Due to \eqref{eq:lineargrowth} and the Lipschitz-continuity of $f_k$ in the last variable, the functional $F^k(\w,\cdot)$ is continuous on $\xi x+W_0^{1,1}(O,\R^m)$. Moreover, again by Tonelli's theorem, for fixed $u\in\xi x+W^{1,1}_0(O,\R^m)$ the function $\w\mapsto F^k(\w,u)$ is measurable. In particular, the functional $F^k$ is $\mathcal{F}\otimes\mathcal{B}(\xi x+W^{1,1}_0(O,\R^m))$-measurable. Due to lower semicontinuity it holds that $f_k\uparrow f$ pointwise and therefore also $F^k\to F_1$ pointwise. It follows that $F_1$ is also $\mathcal{F}\otimes\mathcal{B}(\xi x+W^{1,1}_0(O,\R^m))$-measurable (and not constantly $+\infty$ for fixed $\w\in\Omega$ since $F_1(\w,\xi x,O)<+\infty$). Using that $\xi x+W_0^{1,1}(O,\R^m)$ is a separable, complete metric space, the measurability of $w\mapsto\mu_{\xi}(\w,O)$ follows once again from Lemma \ref{l.oninf} due to the completeness of $\Omega$. In the case when minimizers exist almost surely, we set $f(\w,x,\xi)=0$ for those $\w$, for which no minimizer exists. As just proved the (modified) function $F_1$ is $\mathcal{F}\otimes\mathcal{B}(\xi x+W^{1,1}_0(O,\R^m))$-measurable and the measurable selection of minimizers follows from Lemma \ref{l.oninf}.
\end{proof}

\end{document}